\numberwithin{equation}{section}
\numberwithin{figure}{section}
\newtheorem{lemma}{Lemma}[section]
\newtheorem{theorem}{Theorem}[section]
\theoremstyle{definition}
\newtheorem{remark}{Remark}[section]
\newcommand{\R}{\mathbb{R}}
\newcommand{\real}{\operatorname{Re}}
\begin{document}
\title[On a mollifier of the perturbed Riemann zeta-function]{On a mollifier of the perturbed Riemann zeta-function}
\author{Patrick K\"{u}hn}
\address{Institut f\"{u}r Mathematik, Universit\"{a}t Z\"{u}rich, Winterthurerstrasse 190 \\ CH-8057 Z\"{u}rich, Switzerland}
\email{patrick.kuehn@math.uzh.ch} 
\author{Nicolas Robles}
\address{Department of Mathematics, University of Illinois, 1409 West Green Street \\ Urbana, IL 61801, United States}
\email{nirobles@illinois.edu} 
\author{Dirk Zeindler}
\address{Department of Mathematics and Statistics, Lancaster University, Fylde College, Bailrigg, Lancaster LA1 4YF, United Kingdom}
\email{d.zeindler@lancaster.ac.uk}
\thanks{2010 \textit{Mathematics Subject Classification.} Primary: 11M36; Secondary: 11M06, 11N64.\\
\textit{Keywords and phrases.} Riemann zeta-function, mollifier, zeros on the critical line, ratios conjecture technique, generalized von Mangoldt function.}
\maketitle
\begin{abstract}
The mollification $\zeta(s) + \zeta'(s)$ put forward by Feng is computed by analytic methods coming from the techniques of the ratios conjectures of $L$-functions. The current situation regarding the percentage of non-trivial zeros of the Riemann zeta-function on the critical line is then clarified. 
\end{abstract}
\section{Introduction}
\subsection{Statement of the results}
\noindent The Riemann zeta-function $\zeta(s)$ is defined by the Dirichlet series $\zeta(s)=\sum_{n=1}^{\infty}n^{-s}$ for $s=\sigma+it$, $\sigma>1$ and $t \in \R$. The functional equation of $\zeta(s)$ is given by
\[
\xi(s) = \xi(1-s),
\]
where
\[
\xi(s) = H(s) \zeta(s) \quad \textnormal{and} \quad H(s) = \frac{1}{2}s(s-1)\pi^{-s/2}\Gamma \bigg( \frac{s}{2} \bigg).
\]
This allows us to perform a meromorphic continuation to the whole complex plane except at $s=1$ where $\zeta(s)$ has a simple pole with residue equal to $1$. The connection with number theory comes from the Euler product
\[
\zeta(s) = \prod_{p} (1 - p^{-s})^{-1},
\]
for $\real(s)>1$, and where the product is taken over all the primes $p$. It is well-known from Riemann and from von Mangoldt that the non-trivial zeros $\rho = \beta + i \gamma$ of $\zeta(s)$ are located inside the critical strip $0 < \beta <1$. Moreover, if $N(T)$ denotes the number of such zeros up to height $0 \le \gamma <T$ then
\[
N(T) = \frac{T}{2 \pi} \bigg(\log \frac{T}{2\pi} -1 \bigg) + \frac{7}{8} + S(T) + O \bigg( \frac{1}{T} \bigg),
\] 
where
\[
S(T) = \frac{1}{\pi} \arg \zeta \bigg( \frac{1}{2} +it \bigg) \ll \log T,
\]
as $T \to \infty$, see e.g. \cite{montvau,titchmarsh} for properties of $\zeta(s)$. To state the results, we let $N_0(T)$ denote the number of non-trivial zeros up to height $T>0$ such that $\beta=1/2$. Similarly, let $N_0^*(T)$ denote the number such zeros which are also simple. We then define
\[
\kappa  = \mathop {\lim \inf }\limits_{T \to \infty } \frac{{{N_0}(T)}}{{N(T)}} \quad \textnormal{and} \quad \kappa^*  = \mathop {\lim \inf }\limits_{T \to \infty } \frac{{{N_0^*}(T)}}{{N(T)}}.
\]
The history behind the value of $\kappa$ can be found in \cite{bcy,feng,rrz01}. The main breakthroughs were as follows. In 1942, Selberg \cite{selberg} established that $0 < \kappa \le 1$. Levinson later showed in 1974 that $\kappa \ge .3474$. This was improved by Conrey to $\kappa \ge .4088$ in 1989 and later refined by Bui, Conrey and Young \cite{bcy} to $\kappa \ge .4105$, and shortly afterward by Feng \cite{feng} to $\kappa \ge .4127$. It should be noted that both results are improvements of $\kappa \ge .4088$ and are independent of each other.\\

The second author, Roy and Zaharescu \cite{rrz01} as well as Bui \cite{bui} brought up a point regarding the strength of Feng's result. In \cite{rrz01}, it was explained that $\kappa \ge .4107$, unconditionally, using Feng's mollifier. However, the computation of the mixed terms of the mollifiers of Conrey and of Feng was not carried through explicitly.\\

It should also be remarked that Bui \cite{bui} suggests that the bound obtained in this paper can be attained using the twisted second moment of the Riemann zeta-function due to Balasubramanien \textsl{et. al.} \cite{bchb} and that he also suggests an alternative argument that could lead to the bound $\kappa > .41098$.\\

In this paper, we close this gap and we explain Feng's brilliant choice in the context of the powerful technology developed in \cite{bcy,youngsimple}. These ideas come from the ratios conjectures of $L$-functions due to Conrey, Farmer and Zirnbaeuer \cite{conreyfarmerzirnbauer} as well as to Conrey and Snaith \cite{conreysnaithratios}. It should be noted that Feng's methodology to obtain the main terms of his theorem consisted on an ingenious combination of elementary methods, namely induction and Mertens' formula, applied to Conrey's result \cite{conrey89}. On the other hand, this choice of methods blurred a bit the length the mollifier was allowed to take. Other than choosing the same mollifier, our computations do not overlap and the methods are quite different.\\

Lastly, the closing of this gap will clarify the situation of the percentage of non-trivial zeros on the critical line when one attaches Feng's second-piece mollifier to Conrey's. 
\subsection{Choice of mollifiers}
Let $Q(x)$ be a real polynomial satisfying $Q(0)=1$, $Q(x)+Q(1-x)=\operatorname{constant}$, and define
\begin{align} \label{defV}
V(s) = Q \bigg( -\frac{1}{L}\frac{d}{ds} \bigg) \zeta(s),
\end{align}
where for large $T$,
\[
L = \log T.
\]
If $\psi(s)$ is a mollifier, then it is well-known from the work of Levinson \cite{levinson} and of Conrey \cite{conrey89} that Littlewood's lemma \cite[$\mathsection$9.9]{titchmarsh} followed by the arithmetic and geometric mean inequalities yields
\begin{align} \label{kappaineq}
\kappa \ge 1 - \frac{1}{R} \log \bigg( \frac{1}{T} \int_{1}^{T} |V \psi(\sigma_0 + it)|^2 dt \bigg) +o(1),
\end{align}
where $\sigma_0=1/2-R/L$, and $R$ is a bounded positive real number to be chosen later. Following Feng \cite{feng}, we will choose a mollifier of the form
\[
\psi(s) = \psi_1(s) + \psi_2(s),
\]
where $\psi_1$ is the mollifier considered by Conrey. Let $P_1(x) = \sum_j a_j x^j$ be a certain polynomial satisfying $P_1(0)=0$, $P_1(1)=1$, and let $y_1 = T^{\theta_1}$ where $0 < \theta_1 < 4/7$. We adopt the notation
\[
P_1[n] = P_1 \bigg( \frac{\log(y_1/n)}{\log y_1} \bigg)
\]
for $1 \le n \le y_1$. By convention, we set $P_1[x]=0$ for $x \ge y_1$. Then $\psi_1(s)$ is given by
\begin{align} \label{ps1def}
\psi_1(s) = \sum_{h \le y_1} \frac{\mu(h)h^{\sigma_0 - 1/2}}{h^s} P_1[h],
\end{align}
where $\mu(n)$ is the M\"{o}bius function. For the second mollifier, we take
\begin{align} \label{ps1def}
\psi_2(s) = \sum\limits_{k \leqslant {y_2}} {\frac{{\mu (k){k^{{\sigma _0} - 1/2}}}}{{{k^s}}}} \sum\limits_{\ell  = 2}^K {\sum\limits_{{p_1} \cdots {p_\ell }|k} {\frac{{\log {p_1} \cdots \log {p_\ell }}}{{{{\log }^\ell }{y_2}}}} {P_\ell }[k]}. 
\end{align}
Here $K \ge 2$ is an integer of our choice and $p_1, \cdots, p_\ell$ are distinct primes. Also we need $P_\ell(0)=0$ for $\ell = 2, \cdots, K$. In this case $y_2 = T^{\theta_2}$ where $0< \theta_2 < 1/2$. 
\begin{remark}
It will become clear in the calculation of the crossterm integral between $\psi_1$ and $\psi_2$ that one needs $\theta_1 + \theta_2 < 1 - \varepsilon$. Therefore, if $\theta_1$ increases, then $\theta_2$ decreases unless some difficult work is done to push $\theta_2$ back to its original (or higher) value. See the comments between Theorem \ref{theorem1212} and Theorem \ref{theorem4737} for more details.
\end{remark}
The reason behind this choice is that Feng wishes to mollify not only $\zeta(s)$ but also $\frac{\zeta'(s)}{\log T}$, which is the second term coming from \eqref{defV}. This is accomplished by looking at
\begin{align} \label{expansion1overzeta}
\frac{1}{{\zeta (s) + \tfrac{{\zeta '(s)}}{{\log T}}}} = \frac{1}{{\zeta (s)}} - \frac{1}{{\log T}}\frac{{\zeta '}}{{{\zeta ^2}}}(s) + \frac{1}{{{{\log }^2}T}}\frac{{{{(\zeta ')}^2}}}{{{\zeta ^3}}}(s) - \frac{1}{{{{\log }^3}T}}\frac{{{{(\zeta ')}^3}}}{{{\zeta ^4}}}(s) +  \cdots .
\end{align}
When $k$ is a square-free positive integer, then one has
\[
(\mu  * {\Lambda ^{ * \ell }})(k) = ( - 1)^\ell  \mu(k)\sum\limits_{{p_1} \cdots {p_\ell }|k} {\log {p_1} \cdots \log {p_\ell }} ,
\]
where $f * g$ denotes the Dirichlet convolution of arithmetic functions $f$ and $g$. Here $\Lambda^{* \ell}$ stands for convolving the von Mangoldt function $\Lambda(n)$ with itself exactly $\ell$ times. If $k$ contains a square divisor, then, as remarked by Feng \cite{feng}, the coefficients $a_j$ resulting from \eqref{expansion1overzeta} contribute a lower order to the mean value integrals $I_{11}$, $I_{12}$ and $I_{22}$ related to $\kappa$ in \eqref{kappaineq} (see below for exact definitions of these $I$-integrals).
\subsection{Numerical evaluations}
We will prove the following.
\begin{theorem} 
\label{theorem1212}
We obtain with $\theta_1 = \theta_2 = 1/2 - \varepsilon$
\[
\kappa \ge .369927 \quad \textnormal{and} \quad \kappa^*  \ge .359991,
\]
unconditionally.
\end{theorem}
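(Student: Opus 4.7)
The plan is to apply Littlewood's lemma in the form \eqref{kappaineq}, so that the theorem reduces to an asymptotic evaluation of the mollified mean square
\[
\mathcal{I}(R) := \frac{1}{T}\int_{1}^{T} |V\psi(\sigma_0+it)|^{2} dt
\]
as $T\to\infty$. Writing $\psi=\psi_1+\psi_2$ I would split $\mathcal{I}(R)=I_{11}+2\real I_{12}+I_{22}$, compute each piece as an explicit functional of $R$, $\theta_1$, $\theta_2$, $Q$, $P_1$ and $P_2,\dots,P_K$, and then minimize over the free parameters. For $I_{11}$ one recovers Conrey's 1989 asymptotic. The new work lies in $I_{22}$ and $I_{12}$, for which I would import the ratios-conjecture machinery of Conrey-Farmer-Zirnbauer and Conrey-Snaith in the mollified form developed by Young and by Bui-Conrey-Young.

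Concretely, after inserting the Dirichlet series for $\psi_1$, $\psi_2$ and opening $|V|^{2}$ via the differential-polynomial definition \eqref{defV}, each $I_{jk}$ reduces to twisted second moments of $\zeta$ at $\sigma_0+it$. These are evaluated by the standard recipe: an approximate functional equation, multiple contour shifts, and collection of residues at the poles of $\zeta^{(a)}/\zeta^{b}$. The main terms organize themselves around the convolutions $\Lambda^{*\ell}$ highlighted in \eqref{expansion1overzeta} and ultimately reduce to integrals of finite combinations of derivatives of $Q$ and of the $P_\ell$. In $I_{12}$ the hypothesis $\theta_1+\theta_2<1-\varepsilon$ is exactly what absorbs the off-diagonal into an admissible error. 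The outcome has the shape
\[
\mathcal{I}(R) = c(Q,P_1,P_2,\dots,P_K,R,\theta_1,\theta_2) + o(1),
\]
with $c$ an explicit functional of the data.

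The numerical bound on $\kappa$ then follows by optimization: fix $K$ (say $K=2$), parametrize $Q$, $P_1$ and each $P_\ell$ as polynomials of modest degree subject to $Q(0)=1$, $Q(x)+Q(1-x)$ constant, $P_1(0)=0$, $P_1(1)=1$ and $P_\ell(0)=0$, and perform a nonlinear minimization of $c$ in $R$ and the free coefficients at $\theta_1=\theta_2=1/2-\varepsilon$; the optimum should yield $\kappa\ge .369927$. For $\kappa^{*}$ one repeats the Levinson-Conrey argument with an extra contribution accounting for zeros of multiplicity greater than one, producing the familiar $-\log(1+1/R)$-type correction and the slightly smaller value $.359991$.

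The main obstacle will be the explicit evaluation of $I_{22}$. The mollifier $\psi_2$ carries an $\ell$-fold sum over distinct prime divisors for each $\ell\le K$, so after squaring one faces a quadruple sum over tuples of primes. The combinatorics inside the ratios-conjecture shift integrals, in particular the isolation of the correct poles and the careful tracking of the $P_\ell$-factors together with the logarithms produced by differentiating $(pq)^{-s}$ at $s=0$, is where the calculation is genuinely delicate. Handling the cross term $I_{12}$ at the sharp length $\theta_1+\theta_2<1-\varepsilon$ is the secondary difficulty, and is what ultimately pins down the admissible exponents in the statement.
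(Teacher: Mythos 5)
Your proposal follows essentially the same route as the paper: reduce via \eqref{kappaineq} to the mollified mean square, split into $I_{11}$, $I_{12}$, $I_{22}$, evaluate each by the shifted-moment/ratios technique of Young and Bui--Conrey--Young (approximate functional equation, contour shifts, residue at $z=0$, prime combinatorics for $\psi_2$), and then optimize numerically over $R$, $Q$, $P_1$, $P_\ell$ at $\theta_1=\theta_2=1/2-\varepsilon$. The only minor discrepancy is that the stated constants $.369927$ and $.359991$ are obtained in the paper with $K=3$ (not $K=2$) and the specific polynomials listed there, but this is a matter of tuning the same optimization, not a different method.
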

Using the work of Iwaniec and Deshouillers \cite{deshouillersIwaniec1,deshouillersIwaniec2}, Conrey \cite{conrey89} was able to push the size of the mollifier $\psi_1$ to $\theta_1 < 4/7 - \varepsilon$. In the light of Lemma~\ref{lemmaAFE} and \eqref{newpatherrorterms} below, we require $\theta_1 + \theta_2 < 1$ in our argumentation. The points brought up in \cite{bui} and \cite{rrz01} show that some difficult work is needed if one takes $\theta_1 + \theta_2 > 1 $. Theorem \ref{theorem1212} utilizes $\theta_1, \theta_2 < 1/2 - \varepsilon$. However, if we take $\theta_1 < 4/7 - \varepsilon$ and $\theta_2 < 3/7 - \varepsilon$, then we get
\begin{theorem} \label{theorem4737}
We obtain with $\theta_1 < 4/7 - \varepsilon$ and $\theta_2 < 3/7 - \varepsilon$
\[
\kappa \ge .410725 \quad \textnormal{and} \quad \kappa^*  \ge .403211,
\]
unconditionally.
\end{theorem}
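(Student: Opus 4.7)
The plan is to apply the same framework as for Theorem~\ref{theorem1212}, namely to start from \eqref{kappaineq} with $\psi=\psi_1+\psi_2$ and reduce the estimate of $\kappa$ to the evaluation of three mean-value integrals
\[
I_{ab} = \frac{1}{T}\int_1^T V\psi_a(\sigma_0+it)\,\overline{V\psi_b(\sigma_0+it)}\,dt, \qquad (a,b)\in\{(1,1),(1,2),(2,2)\},
\]
so that $\kappa \ge 1 - \frac{1}{R}\log(I_{11}+2\,\mathrm{Re}\,I_{12}+I_{22})+o(1)$. The derivative operator in the definition \eqref{defV} of $V$ is absorbed into the polynomial $Q$ via the usual shift trick, and once the $I_{ab}$ are written as explicit functions of $Q$, $P_1$, $P_2,\dots,P_K$ and $R,\theta_1,\theta_2$, the proof becomes a numerical optimization.

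For the analytic inputs, $I_{11}$ is precisely Conrey's mollified second moment, which is valid for $\theta_1<4/7-\varepsilon$ thanks to the Deshouillers--Iwaniec Kloosterman-sum estimates \cite{deshouillersIwaniec1,deshouillersIwaniec2}; this is what allows us to enlarge $\theta_1$ beyond $1/2$. The diagonal term $I_{22}$, involving only Feng's mollifier, is treated by an approximate functional equation (Lemma~\ref{lemmaAFE}) together with the ratios-conjecture contour-shift technology of \cite{bcy,youngsimple}; the constraint there is $2\theta_2<1-\varepsilon$, so $\theta_2<3/7-\varepsilon$ is comfortably admissible. The crossterm $I_{12}$ is the delicate one: after inserting the Dirichlet-series coefficients of $\psi_1$ and $\psi_2$ and applying the approximate functional equation, one produces off-diagonal sums whose length is controlled by $y_1 y_2 = T^{\theta_1+\theta_2}$, and the error terms recorded in \eqref{newpatherrorterms} remain $o(1)$ provided $\theta_1+\theta_2<1-\varepsilon$. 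With the choices $\theta_1<4/7-\varepsilon$ and $\theta_2<3/7-\varepsilon$ this constraint is saturated but satisfied, which is exactly why this splitting is optimal within the present method.

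Once the three integrals are in closed form as quadratic functionals of $Q,P_1,P_2,\dots,P_K$ (with $K$ fixed, say $K=2$ or $3$, since higher $K$ gives only marginal improvement), I would feed them into a numerical optimizer: expand each $P_\ell$ and $Q$ in a basis of polynomials with a fixed number of free coefficients, discretize, and use a constrained solver (subject to $Q(0)=1$, $Q(x)+Q(1-x)$ constant, $P_1(0)=0$, $P_1(1)=1$, $P_\ell(0)=0$ for $\ell\ge2$) to minimize $I_{11}+2\,\mathrm{Re}\,I_{12}+I_{22}$ as a function of $R$, and then maximize the resulting lower bound for $\kappa$ in $R$. The figure $\kappa\ge.410725$ should fall out of this optimization at $\theta_1\to 4/7$, $\theta_2\to 3/7$; for $\kappa^*$, one uses the standard refinement (as in \cite{youngsimple}) that replaces $R$ by the appropriate exponent weighting simple zeros and reruns the optimization to get $\kappa^*\ge.403211$.

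The main obstacle is the crossterm $I_{12}$ in the enlarged range $\theta_1+\theta_2$ approaching~$1$: unlike $I_{11}$ and $I_{22}$, here one cannot simply quote an existing result. One must carry out the full ratios-conjecture computation of $I_{12}$, tracking the contribution of the poles from the zeta-swap in the approximate functional equation and showing that the resulting off-diagonal sums, after a Mellin transform and contour shift, produce an admissible error of size $T^{\theta_1+\theta_2-1+\varepsilon}$. The points raised in \cite{bui,rrz01} indicate that any attempt to push $\theta_1+\theta_2>1$ would require genuinely new input (such as an additional twisted second moment beyond \cite{bchb}), so the bound $\theta_1+\theta_2<1-\varepsilon$ is the natural barrier, and the choice $\theta_1=4/7-\varepsilon$, $\theta_2=3/7-\varepsilon$ is the best balance compatible with Conrey's range for $\psi_1$.
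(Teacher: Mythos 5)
Your proposal follows essentially the same route as the paper: open the square to reduce to $I_{11}+2\operatorname{Re}I_{12}+I_{22}$, use Conrey's range $\theta_1<4/7$ for $I_{11}$, evaluate $I_{12}$ and $I_{22}$ by the ratios-technique computation valid for $\theta_1+\theta_2<1-\varepsilon$ and $\theta_2<1/2$ (Theorems \ref{c12theorem} and \ref{c22theorem}), and then numerically optimize over $Q,P_1,\dots,P_K$ and $R$ in \eqref{kappaineq}, exactly as the paper does with its listed \texttt{Mathematica} parameter choices. The only cosmetic difference is that the paper's $\kappa^*$ bound comes from restricting $Q$ to be linear (see its stated choices), rather than from ``reweighting $R$'', but this does not change the substance.
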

It should therefore be stressed that Theorem \ref{theorem1212} is an improvement of the last theorem to ever use $\theta_1 = 1/2 - \varepsilon$, namely the first corollary of \cite{conrey83a}, where it was shown that $\kappa \ge .3658$.\\
The method sketched in \cite{bcy, rrz01} to deal with multiple piece mollifiers carries through and our main result is as follows.
\begin{theorem} \label{unsmoothed}
Suppose that $\theta_1 + \theta_{2} = 1 - \varepsilon$ with $\theta_1 < 4/7 $ and $\theta_{2} < 1/2 $ and $\varepsilon > 0$ small. Then 
\[
\frac{1}{T} \int_1^T |V \psi(\sigma_0 + it)|^2 dt = c(P_1,P_{\ell},Q,R,\theta_1,\theta_2) + o(1),
\]
where $c(P_1,P_{\ell},Q,R,\theta_1,\theta_2) = c_{11} + 2c_{12} + c_{22}$ and the $c_{ij}$ are given by \eqref{c11formula}, \eqref{c12formula} and \eqref{c22formula}.
\end{theorem}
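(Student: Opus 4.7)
The plan is to expand the square and reduce the theorem to evaluating the three mean-value integrals
\[
I_{ij} = \frac{1}{T}\int_1^T V\psi_i(\sigma_0+it)\,\overline{V\psi_j(\sigma_0+it)}\,dt, \qquad (i,j)\in\{(1,1),(1,2),(2,2)\},
\]
and proving $I_{ij}\to c_{ij}$; since $I_{21}=\overline{I_{12}}$ and the limit is real, the mean square tends to $c_{11}+2c_{12}+c_{22}$ as claimed. For each $I_{ij}$ I would use the ratios-conjecture machinery of \cite{bcy,youngsimple}: introduce small shifts $\alpha,\beta = O(1/L)$ so that, after the standard device relating $\overline{\zeta(\sigma_0+it)}$ to a value of $\zeta$ with shifted argument, the operator $V\overline{V}$ is obtained by applying $Q(-L^{-1}\partial_\alpha)\,Q(-L^{-1}\partial_\beta)$ to $\zeta(s+\alpha)\zeta(1-s+\beta)$ and setting $\alpha=\beta=0$. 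The problem is thereby reduced to evaluating the shifted twisted second moment
\[
J_{ij}(\alpha,\beta) = \frac{1}{T}\int_1^T \zeta(s+\alpha)\zeta(1-s+\beta)\,\psi_i(s)\overline{\psi_j(s)}\,dt.
\]

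For each $J_{ij}$ I would open $\psi_i\overline{\psi_j}$ as a double Dirichlet polynomial indexed by $h\le y_1$ or $k\le y_2$, with coefficients built from $\mu(h)P_1[h]$ and, for $\psi_2$, from the prime-weighted factor in \eqref{ps1def}. Applying a twisted second moment formula of Balasubramanien--Conrey--Heath-Brown type \cite{bchb} produces a diagonal main term together with an off-diagonal term. The length constraints $\theta_1<4/7$ (requiring Deshouillers--Iwaniec \cite{deshouillersIwaniec1,deshouillersIwaniec2}) and $\theta_1+\theta_2<1$ arise precisely so that the off-diagonal contribution is $o(1)$. Taking the $Q$-derivatives in $\alpha,\beta$ at the end converts each main term into a multiple contour integral that matches the explicit formulas \eqref{c11formula}, \eqref{c12formula}, \eqref{c22formula} for the $c_{ij}$.

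The main obstacle is the cross term $I_{12}$. The coefficients of $\psi_2$ are not a clean multiplicative function: they involve the nested sum $\sum_{\ell=2}^K(\log y_2)^{-\ell}\sum_{p_1\cdots p_\ell \mid k}\log p_1\cdots\log p_\ell\,P_\ell[k]$, and combining this with $\mu(h)P_1[h]$ produces a coefficient that is not diagonal in any natural variable. Using the identity $(\mu*\Lambda^{*\ell})(k)=(-1)^\ell\mu(k)\sum_{p_1\cdots p_\ell\mid k}\log p_1\cdots\log p_\ell$ from the introduction, one rewrites the generating Dirichlet series as a product involving $\zeta^{(\ell)}/\zeta^{\ell+1}$; a Perron-type contour evaluation then extracts the main term, with the residue calculus controlled by the boundary condition $P_\ell(0)=0$ and by the bound $\theta_1+\theta_2<1$ that keeps the off-diagonal remainder of size $\ll T(hk)^{\varepsilon-1}$ negligible after summation.

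The pieces $I_{11}$ and $I_{22}$ are handled by the same ratios-conjecture routine: $I_{11}$ is essentially Conrey's original computation repackaged in this language and lifted to $\theta_1<4/7$ via \cite{deshouillersIwaniec1,deshouillersIwaniec2}, while $I_{22}$ is Feng's self-product, treated by iterating the $\zeta^{(\ell)}$-Perron analysis for each pair $(\ell,\ell')$ with $2\le \ell,\ell'\le K$. Assembling the three limits $c_{11}+2c_{12}+c_{22}$ then yields the stated asymptotic for $\tfrac{1}{T}\int_1^T|V\psi(\sigma_0+it)|^2\,dt$.
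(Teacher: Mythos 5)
Your skeleton matches the paper's: open the square into $I_{11}+2\operatorname{Re}I_{12}+I_{22}$, pass to shifted integrals, keep only the diagonal under $\theta_1+\theta_2<1$, and recover $V\overline{V}$ at the end by applying $Q(-L^{-1}\partial_\alpha)Q(-L^{-1}\partial_\beta)$. Two route differences are worth flagging. First, the paper does not use an unsmoothed Balasubramanian--Conrey--Heath-Brown formula: it works throughout with a smooth weight $w$ and Young's shifted lemma (Lemma~\ref{lemmaAFE}), in which the off-diagonal terms are already $O_A(T^{-A})$, and it then deduces the sharp-cutoff statement of Theorem~\ref{unsmoothed} by sandwiching between majorant/minorant weights and summing over dyadic segments; your BCHB-based variant is the alternative route the paper attributes to Bui, and is plausible, but then you owe an argument for the off-diagonal terms of the shifted, twisted moment with twists up to $T^{1-\varepsilon}$ rather than a citation designed for the smoothed setting. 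Second, the operator identity is used at $\alpha=\beta=-R/L$ (so that the $\zeta$-factors sit at $\sigma_0\pm it$), not at $\alpha=\beta=0$; this is not cosmetic, since the $T^{-v(\alpha+\beta)}$ pieces evaluated there are exactly what produce the $e^{2Rv}$ and $Q(\theta x+v)$ factors in \eqref{c11formula}--\eqref{c22formula}.

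The genuine gap is in your treatment of $I_{22}$. The two prime-weighted factors attached to $h_1$ and $h_2$ are not independent: the primes occurring in the two sums can coincide, and a shared prime contributes $\log^2 p$ rather than $\log p\cdot\log p$ attached to separate Euler factors. The paper first needs the combinatorial identity of Lemma~\ref{lemmacombinatorics} (generalizing Feng's Lemma 8) to reorganize the double prime sum according to the number $k$ of common primes; after the $p$-adic analysis the common primes produce powers of $\zeta''/\zeta$ (equivalently the generalized von Mangoldt function $\Lambda_2$, handled by Lemma~\ref{lemmaEM2lambdas}), while the unshared primes give differences of logarithmic derivatives. This is precisely what creates the inner sum over $k$ with the coefficients $\binom{\ell_1}{k}(\ell_2)_k\,2^{\ell_1+\ell_2-2k}$ in \eqref{c22formula}. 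Your plan to ``iterate the Perron analysis for each pair $(\ell,\ell')$'' treats the two factors as independent and would produce a $c_{22}$ without the $k$-sum, i.e.\ essentially a square of cross-term shapes, which does not match the stated formula. A smaller instance of the same issue already occurs in $I_{12}$: the coefficient sum over $hm=kn$ does not yield a clean $(\zeta')^{\ell}/\zeta^{\ell+1}$ but rather the difference $\bigl(\tfrac{\zeta'}{\zeta}(1+s+u)-\tfrac{\zeta'}{\zeta}(1+\beta+u+z)\bigr)^{\ell}$, and one must expand binomially, sum $(\mathbf{1}*\Lambda^{*r})(n)/n$ against the polynomials by Euler--Maclaurin, and observe that only $r=\ell$ survives to the main term in order to arrive at \eqref{c12formula}; this step should be made explicit before the formulas can be claimed.
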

We use \texttt{Mathematica} to numerically evaluate $c(P_1,P_{\ell},Q,R,1/2,1/2)$ with the following choices of parameters. With $K=3$, $R=1.3$,
\begin{align}
Q(x) &= .481936 + .632349(1-2x) - .144698(1-2x)^3 + .0304136 (1-2x)^5, \nonumber \\
P_1(x) &= x + .225339x(1-x) -1.01137 x(1-x)^2 + .174004x(1-x)^3 - .100235 x(1-x)^4, \nonumber \\
P_2(x) &= 1.05138 x + .284201x^2, \nonumber \\
P_3(x) &= .222032 x -.13254 x^2, \nonumber
\end{align}
we have $\kappa \ge .369927$. To get $\kappa^* \ge .359991$, we take $K=3$, $R=1.2$,
\begin{align}
Q(x) & = .476202 + .523798 (1-2x),\nonumber \\
P_1(x) &= x + .0531913x(1-x) - .594999 x(1-x)^2 -.00107597 x(1-x)^3 -.0761954 x(1-x)^4, \nonumber \\
P_2(x) &= .896567 x -.0297464 x^2, \nonumber \\
P_3(x) &= .0699271 x -.108964 x^2. \nonumber
\end{align}
We also use \texttt{Mathematica} to numerically evaluate $c(P_1,P_{\ell},Q,R,4/7,3/7)$ with the following choices of parameters. With $K=3$, $R=1.295$,
\begin{align}
Q(x) &= .492203 + .621972(1-2x) - .148163(1-2x)^3 + .033988 (1-2x)^5 , \nonumber \\
P_1(x) &= x + .229117x(1-x) - 2.932318x(1-x)^2 + 4.856163x(1-x)^3 - 2.390999x(1-x)^4 \nonumber \\
P_2(x) &= -.072644x + 1.559440x^2\nonumber \\
P_3(x) &= .701568x - .554403 x^2\nonumber
\end{align}
we have $\kappa \ge .410725$. To get $\kappa^* \ge .403211$, we take $K=3$, $R=1.109$,
\begin{align}
Q(x) &= .485034 + .514966(1-2x), \nonumber \\
P_1(x) &= x + .0486916x(1-x) -2.02526x(1-x)^2 +3.43611x(1-x)^3 -1.62355 x(1-x)^4, \nonumber \\
P_2(x) &= -.034431x + 1.09223x^2, \nonumber \\
P_3(x) &= .479296x -0.385868x^2. \nonumber
\end{align}
\indent An interesting question to ask is: what would have happened if Feng had published his mollifier before Conrey's increment of $\theta_1$ from $1/2$ to $4/7$. Since this has not been remarked before in the literature, we take the chance to answer it. If $\psi_1$ and $\psi_2$ are kept at $1/2 - \varepsilon$, then Feng's piece adds an additional $0.4127\%$ to Conrey's $36.58\%$ as shown in the table below.
\begin{table}[H]
\centering
\begin{tabular}{ccccc}
 $\theta_1$ & $\theta_2$ & \%  \\
 $1/2$ & $1/2$ & $36.58\% + 0.4127\%$  \\
 $4/7 $& $3/7$ & $40.88\% + 0.1925\%$ 
\end{tabular}
\caption{\% according to sizes of $\theta$}
\label{my-label}
\end{table}
Since $\psi_2$ is the perturbation of $\psi_1$, it behooves us to take $\theta_1$ as large as possible ($4/7$) at the cost of sacrificing $\theta_2$ to $3/7$ which only adds $0.1925\%$.
\subsection{The smoothing argument}
The idea of smoothing the mean value integrals was introduced in \cite{bcy,youngsimple} and it helps substantially in our calculations. Let $w(t)$ be a smooth function satisfying the following properties:
\begin{enumerate}
\item[(a)] $0 \le w(t) \le 1$ for all $t \in \R$,
\item[(b)] $w$ has compact support in $[T/4,2T]$,
\item[(c)] $w^{(j)}(t) \ll_j \Delta^{-j}$, for each $j=0,1,2,\cdots$ and where $\Delta = T/L$.
\end{enumerate}
This allows us to re-write Theorem \ref{unsmoothed} as follows.
\begin{theorem} \label{smoothed}
Suppose that $\theta_1 = 1/2 - \varepsilon$ and $\theta_2 = 1/2 - \varepsilon$ for $\varepsilon > 0 $ small. For any $w$ satisfying conditions \textnormal{(a)}, \textnormal{(b)} and \textnormal{(c)} and $\sigma_0 = 1/2-R/L$,
\[
\int_{-\infty}^{\infty} w(t)|V \psi(\sigma_0 + it)|^2 dt = c(P_1,P_{\ell},Q,R,\theta_1,\theta_2)\widehat{w}(0) + O(T/L),
\]
uniformly for $R \ll 1$, where $c(P_1,P_{\ell},Q,R,\theta_1,\theta_2) = c_{11} + 2c_{12} + c_{22}$ and the $c_{ij}$ are given by \eqref{c11formula}, \eqref{c12formula} and \eqref{c22formula}.
\end{theorem}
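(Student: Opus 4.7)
The plan is to expand the square and treat the three resulting integrals separately. Writing $|V\psi|^{2}=|V\psi_{1}|^{2}+V\psi_{1}\overline{V\psi_{2}}+\overline{V\psi_{1}}V\psi_{2}+|V\psi_{2}|^{2}$, I would introduce the smoothed mean values
\[
I_{ij}(w)=\int_{-\infty}^{\infty}w(t)\,V\psi_{i}(\sigma_{0}+it)\,\overline{V\psi_{j}(\sigma_{0}+it)}\,dt\qquad (i,j\in\{1,2\}),
\]
and prove separately that $I_{ij}(w)=c_{ij}\widehat{w}(0)+O(T/L)$, where the $c_{ij}$ are the constants given by (\ref{c11formula}), (\ref{c12formula}) and (\ref{c22formula}). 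Summing yields $c(P_{1},P_{\ell},Q,R,\theta_{1},\theta_{2})\widehat{w}(0)+O(T/L)$, which is the theorem.

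The common engine for each $I_{ij}$ is the ratios-conjecture technique from \cite{bcy,youngsimple,conreysnaithratios}. First I would open $V$ using the approximate functional equation (Lemma~\ref{lemmaAFE} referred to in the text) to replace $\zeta$ and its derivatives at $\sigma_{0}+it$ by a short Dirichlet polynomial plus its functional-equation dual; the operator $Q(-L^{-1}d/ds)$ acts on both pieces smoothly and preserves the length. Writing $\psi_{1}$ and $\psi_{2}$ as the Dirichlet polynomials \eqref{ps1def} with coefficients $\mu(h)h^{\sigma_{0}-1/2}P_{1}[h]$ and $\mu(k)k^{\sigma_{0}-1/2}\sum_{\ell}\sum_{p_{1}\cdots p_{\ell}\mid k}(\log p_{1}\cdots\log p_{\ell})(\log^{\ell}y_{2})^{-1}P_{\ell}[k]$, I would multiply everything out. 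The integrals $\int w(t)(m/n)^{it}\,dt=\widehat{w}(\tfrac{1}{2\pi}\log(m/n))$ on $t$ then produce the diagonal terms $m=n$ contributing $\widehat{w}(0)$ times an arithmetic sum, while the off-diagonals from the $\chi(s)$-twisted side are evaluated by a contour shift that picks up a residue at a zero of $\chi$, à la the ratios recipe, leaving a double contour integral over small circles around the origin whose integrand is explicit in the polynomials $P_{1}, P_{\ell}, Q$.

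To pass from these combinatorial expressions to the clean forms of the $c_{ij}$, I would evaluate the resulting Euler products by separating the relevant $\zeta$-factors — giving rise to the generalized von Mangoldt convolutions $\mu\ast\Lambda^{\ast\ell}$ already anticipated in the introduction — and then change variables in the $P$-integrals to turn the sums into the standard Mellin/Laplace-type integrals over $[0,1]$ that define $c_{11}, c_{12}, c_{22}$. The condition $\theta_{1}+\theta_{2}=1-\varepsilon$ enters precisely here: it guarantees that the off-diagonal error from the crossterm, whose size is controlled by bounds of the type in \eqref{newpatherrorterms}, remains of order $T/L$ after the contour analysis. Throughout, properties (a)--(c) of $w$ guarantee that every derivative is controlled by $\Delta^{-j}$ with $\Delta=T/L$, which is what turns the error $o(1)$ in the unsmoothed Theorem~\ref{unsmoothed} into the sharper $O(T/L)$ uniformly in $R\ll 1$.

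The main obstacle will be the crossterm $I_{12}$, for two reasons. First, the coefficients of $\psi_{2}$ involve the convolution of $\mu$ with multi-fold self-convolutions of $\Lambda$, so the relevant arithmetic sums become sums over square-free moduli with several primes marked, and the combinatorics of the residue at the $\zeta$-pole is noticeably heavier than for $I_{11}$. Second, the size constraint $\theta_{1}+\theta_{2}<1$ is saturated in the statement, so the off-diagonal piece has to be handled sharply; any slack in the Kloosterman-type bounds used in the contour shift would force us to shrink $\theta_{1}+\theta_{2}$ and ruin the numerics of Theorems~\ref{theorem1212}--\ref{theorem4737}. The diagonal pieces $I_{11}$ and $I_{22}$, by contrast, reduce after the ratios-conjecture machinery to standard polynomial integrals of the type worked out in \cite{conrey89,feng}, and I expect them to follow from the same analysis with only bookkeeping changes.
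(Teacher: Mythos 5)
Your high-level plan (open the square, evaluate $I_{11}$, $I_{12}$, $I_{22}$ separately by the twisted-second-moment/ratios technique, and sum to get $c_{11}+2c_{12}+c_{22}$) coincides with the paper's, but the mechanism you describe for extracting the main terms contains genuine confusions. First, in Lemma~\ref{lemmaAFE} (Young's shifted approximate functional equation) the off-diagonal terms $hm\neq kn$ are simply absorbed into $O_A(T^{-A})$; there is no contour shift ``picking up a residue at a zero of $\chi$,'' and the relevant diagonal is $hm=kn$, not $m=n$. The actual main terms arise differently: after writing $P_1[h]$, $P_\ell[k]$ as Mellin integrals and evaluating $\sum_{hm=kn}$ $p$-adically into $\frac{\zeta(1+s+u)\zeta(1+\alpha+\beta+2z)}{\zeta(1+s+\alpha+z)\zeta(1+u+\beta+z)}A_{\alpha,\beta}(s,u,z)$, one shifts the $z$-contour and the residue that matters is at the simple pole $z=0$ of $G(z)/z$ (the factor $G$ kills the pole of $\zeta(1+\alpha+\beta+2z)$); the dual sum, carrying $X_{\alpha,\beta,t}\approx T^{-\alpha-\beta}$, is then recombined with the principal sum via the identity $\frac{1-T^{-\alpha-\beta}}{(\alpha+\beta)\log y_1}=\frac{1}{\theta_1}\int_0^1 T^{-v(\alpha+\beta)}\,dv$ of \eqref{integraltrick}, which is precisely the origin of the $\int_0^1\cdots dv$ and the $Q(\theta x+v)$ factors in \eqref{c11formula}--\eqref{c22formula}. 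Second, your treatment of $V$ is a gap: you propose to expand $V=Q(-L^{-1}d/ds)\zeta$ directly in the AFE, but the paper never does this. It proves Lemmas~\ref{lemmac11}--\ref{lemmac22} for the shifted integrals with $\zeta(\tfrac12+\alpha+it)\zeta(\tfrac12+\beta-it)$, uniformly for $\alpha,\beta\ll L^{-1}$, and only afterwards recovers $V$ by acting with $Q(-\frac{1}{\log T}\frac{d}{d\alpha})Q(-\frac{1}{\log T}\frac{d}{d\beta})$ at $\alpha=\beta=-R/L$, using Cauchy's formula on circles of radius $\asymp L^{-1}$ so that the error terms persist. Without this shift device (or an equivalent), ``the operator acts smoothly and preserves the length'' is not a step you can execute, particularly since $\sigma_0=1/2-R/L$ lies off the half-line and $V$ involves derivatives of $\zeta$.

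Two further points. The constraint $\theta_1+\theta_2<1$ has nothing to do with Kloosterman-type bounds at these lengths: it comes from the validity range of Lemma~\ref{lemmaAFE} and from the shifted $z$-contour estimate \eqref{newpatherrorterms}, $T^{1+\varepsilon}(y_1y_2/T)^{\delta}\ll T^{1-\varepsilon}$; Deshouillers--Iwaniec input is only needed to push $\theta_1$ to $4/7$, which is not used in this theorem. And your expectation that $I_{22}$ is mere bookkeeping is misplaced: in the paper it is the hardest piece, requiring the combinatorial Lemma~\ref{lemmacombinatorics} to merge the two prime sums, the appearance of $\Lambda_2$ and $\zeta''/\zeta$, and the Euler--Maclaurin lemmas for $\mathbf{1}*\Lambda_2^{*k}*\Lambda^{*r_1+r_2}$ (Lemmas~\ref{lemma1PK}--\ref{lemmaEM2lambdas}), none of which your sketch anticipates.
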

How to deal with a two-piece mollifier was explained in \cite{bcy,feng}. In \cite{rrz01} a 4-piece mollifier was studied. The idea is to open the square in the integrand to get
\begin{align}
\int |V\psi|^2 &= \int |V\psi_1|^2 + \int |V|^2\psi_1 \overline{\psi_2} + \int |V|^2\overline{\psi_1} \psi_2 + \int |V\psi_2|^2 \nonumber \\
&=I_{11} + I_{12} + \overline{I_{12}} + I_{22}. \nonumber
\end{align}
We will compute these integrals in the next sections. The integral $I_{12}$ is asymptotically real, thus $I_{21}$ follows from $I_{12}$, i.e. $I_{12} \sim \overline{I_{21}}$.
\subsection{The main terms}
The main terms coming from integrals $I_{11}$, $I_{12}$ and $I_{22}$ are now stated as theorems.
\begin{theorem}[Conrey] \label{c11theorem}
Suppose $\theta_1 < 4/7$. Then
\[
\int_{-\infty}^{\infty} w(t)|V \psi_1(\sigma_0+it)|^2 dt \sim c_{11}(P_1,Q,R,\theta_1)\widehat{w}(0) + O(T/L)
\]
uniformly for $R \ll 1$, where
\begin{align} \label{c11formula}
{c_{11}}({P_1},Q,R,{\theta _1}) = 1 + \frac{1}{{{\theta _1}}}\int_0^1 \int_0^1 {{e^{2Rv}}{{\left( {\frac{d}{{dx}}{e^{R\theta x}}{P_1}(x + u)Q(v + \theta x){|_{x = 0}}} \right)}^2}dudv}  .
\end{align}
\end{theorem}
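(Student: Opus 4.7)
The plan is to follow the shifted-second-moment recipe of Bui--Conrey--Young \cite{bcy} and Young \cite{youngsimple}. Because $Q$ is a polynomial, we may write $V(s)=Q(-\tfrac{1}{L}\partial_{\alpha})\zeta(s+\alpha)|_{\alpha=0}$, and introduce a second shift $\beta$ on the conjugate side. Combining Schwarz reflection $\overline{\zeta(\sigma_0+it)}=\zeta(\sigma_0-it)$ with the functional equation $\zeta(1-s)=\chi(1-s)^{-1}\zeta(s)$ converts $|V\psi_1(\sigma_0+it)|^{2}$ into the operator $Q(-\tfrac{1}{L}\partial_{\alpha})Q(-\tfrac{1}{L}\partial_{\beta})|_{\alpha=\beta=0}$ applied to
\[
\chi(\sigma_0+\beta-it)\,\zeta(\sigma_0+\alpha+it)\,\zeta(1-\sigma_0-\beta+it)\,\psi_1(\sigma_0+it)\,\psi_1(\sigma_0-it),
\]
after which I substitute the definition \eqref{ps1def} of $\psi_1$ to reduce matters to a M\"obius-weighted double sum over $h,k\le y_1$.

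The core step is then to evaluate the twisted shifted second moment
\[
\mathcal{I}(\alpha,\beta;h,k)=\int_{-\infty}^{\infty} w(t)\,\chi(\sigma_0+\beta-it)\,\zeta(\sigma_0+\alpha+it)\,\zeta(1-\sigma_0-\beta+it)\left(\frac{h}{k}\right)^{it}dt.
\]
Expanding the two shifted zetas as Dirichlet series, a standard stationary-phase analysis of the resulting oscillatory integrals $\int w(t)\chi(\sigma_0+\beta-it)(hN/km)^{it}dt$, carried out as in \cite[Section 2]{youngsimple}, yields a diagonal main term from $hN=km$ together with a swapped main term produced by the functional equation. Both can be packaged into the ratios-recipe expression of Conrey--Farmer--Zirnbauer \cite{conreyfarmerzirnbauer} and Conrey--Snaith \cite{conreysnaithratios}, involving local $\zeta$-factors at exponent $\alpha+\beta$ on primes dividing $hk$ together with an archimedean factor $(t/2\pi)^{-\beta}$. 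Inserting the Mellin representation $P_1[h]=\tfrac{1}{2\pi i}\oint P_1(z)y_1^{z}h^{-z}\tfrac{dz}{z}$ converts the sum over $h,k$ into a multiple contour integral; shifting contours past the simple pole of $\zeta$ at $1$, collecting residues, applying the $Q$-operators in $\alpha,\beta$ via Cauchy's formula, and finally making the change of variables $u=1-\log h/\log y_1$ with $v$ playing the role of the residual shift produces the double integral in \eqref{c11formula}. The leading constant $1$ in \eqref{c11formula} is simply the $h=k=1$ contribution.

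The principal obstacle is uniformity in $\alpha,\beta$ together with the admissible range $\theta_1<4/7$. Beyond $\theta_1=1/2$, the off-diagonal error from the swap reduces to Kloosterman-type sums whose control requires the spectral Kloosterman estimates of Deshouillers--Iwaniec \cite{deshouillersIwaniec1,deshouillersIwaniec2} exploited by Conrey \cite{conrey89}. The subtle point is to show that these error contributions are $O(T/L)$ \emph{uniformly} for $\alpha,\beta$ ranging over a small disc around the origin, so that the Cauchy-integral extraction of the $Q$-derivatives (costing a factor $L$ per derivative, compensated by the $1/L$ in front of each $\partial$) preserves this bound. Once uniformity is secured, the remaining manipulations reducing the main term to the polynomial form in \eqref{c11formula} are elementary residue calculations.
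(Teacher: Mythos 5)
Your outline is essentially the paper's: introduce shifts $\alpha,\beta$, evaluate the smoothed shifted second moment twisted by $(h/k)^{-it}$ (this is exactly Lemma \ref{lemmaAFE}, Young's lemma, which already packages the stationary-phase/functional-equation step you describe), reduce to the diagonal $hm=kn$, insert Mellin representations of $P_1[h]$ and $P_1[k]$, shift the $z$-contour picking up only the residue at $z=0$, resum over the polynomial coefficients, use the identity \eqref{integraltrick} to create the $v$-integral, and finally apply the operators $Q(-\frac{1}{L}\frac{d}{d\alpha})Q(-\frac{1}{L}\frac{d}{d\beta})$ at $\alpha=\beta=-R/L$ via Cauchy's formula, with uniformity of the error terms on small circles. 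Like the paper, you do not reprove the range $\theta_1<4/7$ but defer the off-diagonal/Kloosterman analysis to Conrey and Deshouillers--Iwaniec; the paper likewise only illustrates $\theta_1<1/2$ and cites \cite{conrey89}, so that is consistent with the intended argument.

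However, your closing claim that the leading constant $1$ in \eqref{c11formula} ``is simply the $h=k=1$ contribution'' is wrong, and following it would derail the final bookkeeping. The $h=k=1$ term contributes $\int w(t)\zeta(\tfrac{1}{2}+\alpha+it)\zeta(\tfrac{1}{2}+\beta-it)\,dt$, whose main term is $\widehat w(0)\,\bigl(1-T^{-\alpha-\beta}\bigr)/(\alpha+\beta)\asymp L\,\widehat w(0)$ at $\alpha=\beta=-R/L$; it is nowhere near the constant $1$, and the reduction to constant order happens only through the full M\"obius sums (the $1/\zeta(1+s+\alpha)\zeta(1+u+\beta)$ factors of size $\asymp L^{-2}$). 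In the paper the $1$ emerges from combining the direct and swapped terms over \emph{all} $h,k$: writing $I_{11}=[I'_{11}(\alpha,\beta)+I'_{11}(-\beta,-\alpha)]+(T^{-\alpha-\beta}-1)I'_{11}(-\beta,-\alpha)$ as in \eqref{addsubstractI11}, the bracket evaluates to $\int_0^1 2P_1'(u)P_1(u)\,du=P_1(1)^2-P_1(0)^2=1$ by the normalizations $P_1(0)=0$, $P_1(1)=1$, while the second piece produces the double integral in \eqref{c11formula} via \eqref{integraltrick}. A smaller slip: $P_1[h]=\frac{1}{2\pi i}\oint P_1(z)y_1^{z}h^{-z}\frac{dz}{z}$ is not an identity; one must expand $P_1$ into monomials and use $\log^i(y_1/h)=\frac{i!}{2\pi i}\int_{(1)}(y_1/h)^s\,\frac{ds}{s^{i+1}}$ termwise, which is what later allows the sums over $i,j$ to be reassembled into $P_1$ after the $x,y$ differentiations.
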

Let $(\ell)_k =  \ell(\ell-1)\cdot\ldots\cdot(\ell-k+1)$ denote the Pochhammer symbol.
\begin{theorem} \label{c12theorem}
Suppose $\theta_1 + \theta_2 = 1 - \varepsilon$ where $\theta_1 < 4/7$ and $\theta_2 < 1/2$. Then
\[
\int_{-\infty}^{\infty} w(t)V \psi_1 \overline{\psi_2}(\sigma_0+it) dt \sim c_{12}(P_1,P_\ell,Q,R,\theta_1,\theta_2)\widehat{w}(0) + O(T/L)
\]
uniformly for $R \ll 1$, where
\begin{align} \label{c12formula}
  {c_{12}}({P_1},{P_\ell },Q,R,{\theta _1},{\theta _2}) &= \sum\limits_{\ell  = 2}^K {\frac{{(-1)^\ell}}{{(\ell  - 1)!}}} \int_0^1 {{{(1 - u)}^{\ell  - 1}}{P_1}(u){P_\ell }(u)du}  \nonumber \\
   &\quad - \frac{{{\theta _1} - {\theta _2}}}{{{\theta _1}}}\sum\limits_{\ell  = 2}^K {\frac{{(-1)^\ell}}{{\ell !}}} \int_0^1 {{{(1 - u)}^{\ell  - 1}}P{'_1}\left( {1 - (1 - u)\frac{{{\theta _2}}}{{{\theta _1}}}} \right){P_\ell }(u)du}  \nonumber \\
   &\quad + \frac{1}{{{\theta _1}}}\sum\limits_{\ell  = 2}^K {\frac{{(-1)^\ell}}{{\ell !}}} \frac{{{d^2}}}{{dxdy}}\bigg[{e^{R({\theta _1}x + {\theta _2}y)}}  \int_0^1 \int_0^1 {e^{2Rv}}{{(1 - u)}^\ell }   \nonumber \\
   &\quad \times {P_1}\left( {x + 1 - (1 - u)\frac{{{\theta _2}}}{{{\theta _1}}}} \right){P_\ell }(y + u)Q({\theta _2}y + v)Q({\theta _1}x + v)dudv{\bigg|_{x = y = 0}}\bigg]  .  
\end{align}
\end{theorem}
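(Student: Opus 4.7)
The plan is to follow the ratios-conjecture framework developed in \cite{bcy, youngsimple}. The key analytic input is a smoothed twisted second moment of the form
\[
\int_{-\infty}^{\infty} w(t)\, \zeta\bigl(\tfrac{1}{2}+\alpha+it\bigr)\,\zeta\bigl(\tfrac{1}{2}+\beta-it\bigr) \bigl(\tfrac{h}{k}\bigr)^{-it} dt = \widehat{w}(0)\, M(\alpha,\beta,h,k) + O(T^{1-\delta}),
\]
valid uniformly for $\alpha,\beta \ll 1/L$ and $hk \leq T^{1-\varepsilon}$, where $M$ is the usual diagonal-plus-swap expression produced by the approximate functional equation. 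The length restriction is precisely what forces the hypothesis $\theta_1+\theta_2=1-\varepsilon$ in the statement.

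First, since $V = Q(-L^{-1}d/ds)\zeta$, we have
\[
|V(\sigma_0+it)|^2 = Q\bigl(-L^{-1}\partial_\alpha\bigr)\,Q\bigl(-L^{-1}\partial_\beta\bigr)\, \zeta\bigl(\tfrac{1}{2}+\alpha+it\bigr)\zeta\bigl(\tfrac{1}{2}+\beta-it\bigr) \Big|_{\alpha=\beta=-R/L}.
\]
Opening $\psi_1\overline{\psi_2}$ as a Dirichlet polynomial indexed by $(h,k,\ell)$, swapping summation and integration, and invoking the twisted second moment reduces $I_{12}$ to
\[
\widehat{w}(0)\, Q(-L^{-1}\partial_\alpha)\,Q(-L^{-1}\partial_\beta) \sum_{\ell=2}^{K}\sum_{h,k} c_{h,k,\ell}\,M(\alpha,\beta,h,k)\Big|_{\alpha=\beta=-R/L} + O(T/L),
\]
where $c_{h,k,\ell}$ carries $P_1[h]$, $P_\ell[k]$, and the log-prime convolution $(\mu*\Lambda^{*\ell})(k) = (-1)^{\ell}\mu(k)\sum_{p_1\cdots p_\ell\mid k}\log p_1\cdots\log p_\ell$ for squarefree $k$.

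Next, I would carry out the $h$- and $k$-sums by representing $P_1[h]$ and $P_\ell[k]$ as inverse-Mellin integrals against $y_1^u$ and $y_2^v$. This converts the $h$-sum into a contour integral whose Dirichlet series is $1/\zeta$, and the $k$-sum into a contour integral whose Dirichlet series is $(-1)^{\ell}(\zeta')^{\ell}/\zeta^{\ell+1}$---exactly the structure motivated by \eqref{expansion1overzeta}. Inside the $(u,v)$-contours the only relevant singularities sit at $u=0$ and $v=0$, so the $h,k$-sums collapse to residues of products of $\zeta$-ratios there: the diagonal part of $M$ contributes one residue, while the swap part of $M$ contributes another, shifted by the $R$-exponentials produced by the $\Gamma$-factor of the functional equation evaluated at $\sigma_0=\tfrac{1}{2}-R/L$.

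Finally, I would compute these residues, differentiate out the two $Q$-operators, and simplify. The diagonal residue produces the first two lines of \eqref{c12formula}: the $(1-u)^{\ell-1}$ kernel arises from $\ell-1$ of the $v$-derivatives landing on $(y_2/k)^v$, the prefactors $(-1)^\ell/(\ell-1)!$ and $(-1)^\ell/\ell!$ record the $\Lambda^{*\ell}$ combinatorics, and the $P_1'$ in the second line (with its $(\theta_1-\theta_2)/\theta_1$ weight) appears when a derivative is forced onto the $\psi_1$-kernel through the chain rule. The swap residue produces the last two lines: the pair $Q(\theta_1 x + v)Q(\theta_2 y + v)$ together with $e^{R(\theta_1 x + \theta_2 y)}e^{2Rv}$ encodes the interaction of the two $Q$-operators with the $(\alpha,\beta)\mapsto(-\beta,-\alpha)$ swap at the shift $-R/L$, while the argument $P_1\bigl(x+1-(1-u)\theta_2/\theta_1\bigr)$ reflects the linear change of variables needed to place the $u$-contour onto $[0,1]$. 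The main obstacle is the bookkeeping: tracking the residues arising from the two $Q$-derivative expansions, the $K-1$ terms of the $\ell$-sum, and the diagonal/swap decomposition of $M$, and then performing the change of variables that reassembles them cleanly into the compact $u,v$-integral form \eqref{c12formula}.
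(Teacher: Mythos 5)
Your overall framework is the same as the paper's: shifted twisted second moment (Lemma \ref{lemmaAFE}), Mellin representations of $P_1[h]$ and $P_\ell[k]$, a diagonal-plus-swap decomposition, and recovery of the $Q$'s by applying $Q(-L^{-1}\partial_\alpha)Q(-L^{-1}\partial_\beta)$ at $\alpha=\beta=-R/L$. However, the sketch glosses over the two steps that carry the real content, and misattributes where the terms of \eqref{c12formula} come from. First, the $h$- and $k$-sums do not decouple into separate contour integrals with Dirichlet series $1/\zeta$ and $(-1)^\ell(\zeta')^\ell/\zeta^{\ell+1}$: the diagonal condition $hm=kn$ couples $h,k,m,n$, and the heuristic \eqref{expansion1overzeta} is only motivation. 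One must evaluate the coupled sum $S_\ell$ by an Euler-product/inclusion--exclusion analysis, which produces the ratio $\zeta(1+s+u)\zeta(1+\alpha+\beta+2z)/(\zeta(1+s+\alpha+z)\zeta(1+u+\beta+z))$ times an arithmetic factor $A_{\alpha,\beta}$ (with $A_{0,0}(z,z,z)=1$ to be verified) times $\bigl(\tfrac{\zeta'}{\zeta}(1+s+u)-\tfrac{\zeta'}{\zeta}(1+\beta+u+z)\bigr)^{\ell}$; after the binomial expansion only the $r=\ell$ term survives into the main term, and this is where the $1/\ell!$ and $1/(\ell-1)!$ ultimately originate. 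Your proposal never confronts this coupling, nor the role of the third contour variable $z$ from $V_{\alpha,\beta}$ and the fact that $G(z)$ kills the pole of $\zeta(1+\alpha+\beta+2z)$, which is what makes the $z$-shift (and hence the $\theta_1+\theta_2<1$ constraint via the $(y_1y_2/T)^\delta$ bound) work.

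Second, the $(1-u)$-kernels and the argument $P_1\bigl(x+1-(1-u)\theta_2/\theta_1\bigr)$ do not come from ``$v$-derivatives landing on $(y_2/k)^v$'' or a chain-rule effect; they come from summing the remaining diagonal variable $n$ against the smooth weights via the Euler--Maclaurin-type Lemma \ref{lemma36rrz01} (after the contour evaluations of Lemma \ref{integralwithlogderivative} and \eqref{finalresultlemmaintegrallogderivative}), with the substitution $n=y_2^{1-u}$ producing $1-(1-u)\theta_2/\theta_1$. Moreover, the split of \eqref{c12formula} is not ``diagonal gives the first two lines, swap gives the last two'': both pieces have main terms of the same shape, and the first two lines arise from the combination $I_{12}'(\alpha,\beta)+I_{12}'(-\beta,-\alpha)$ simplified by an integration by parts that uses $P_1(0)=P_\ell(0)=0$, while the final double-integral term arises from the factor $(T^{-\alpha-\beta}-1)I_{12}'(-\beta,-\alpha)$ converted by the identity \eqref{integraltrick}. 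Without these ingredients the bookkeeping you defer cannot be made to close, so as it stands the proposal is an outline of the correct strategy with genuine gaps at its decisive steps.
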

\begin{theorem} \label{c22theorem}
Suppose $\theta_2 < 1/2$. Then
\[
\int_{-\infty}^{\infty} w(t)|V \psi_2(\sigma_0+it)|^2 dt \sim c_{22}(P_\ell,Q,R,\theta_2)\widehat{w}(0) + O(T/L)
\]
uniformly for $R \ll 1$, where
\begin{align} \label{c22formula}
  {c_{22}}(P_{\ell},Q,R,\theta_2) &= \sum\limits_{{\ell _1} = 2}^K {\sum\limits_{{\ell _2} = 2}^K {\sum\limits_{k = 0}^{\min ({\ell _1},{\ell _2})} {{{( - 1)}^{{\ell _1} + {\ell _2} - 2k}}\binom{\ell_1}{k}{{({\ell _2})}_k}} } }  \nonumber \\
   &\quad \times \frac{{{2^{{\ell _1} + {\ell _2} - 2k}}}}{{({\ell _1} + {\ell _2} - 1)!}}\int_0^1 {{{(1 - u)}^{{\ell _1} + {\ell _2} - 1}}{P_{{\ell _1}}}(u){P_{{\ell _2}}}(u)du}  \nonumber \\
   &\quad + \frac{1}{{{\theta _2}}}\sum\limits_{{\ell _1} = 2}^K {\sum\limits_{{\ell _2} = 2}^K {\sum\limits_{k = 0}^{\min ({\ell _1},{\ell _2})} {{{( - 1)}^{{\ell _1} + {\ell _2} - 2k}}\binom{\ell_1}{k}{{({\ell _2})}_k}} } } \frac{{{2^{{\ell _1} + {\ell _2} - 2k}}}}{{({\ell _1} + {\ell _2})!}} \frac{{{d^2}}}{{dxdy}}\bigg[{e^{R{\theta _2}(x + y)}} \nonumber \\
	 &\quad \times \int_0^1 \int_0^1 {{e^{2Rv}}{{(1 - u)}^{{\ell _1} + {\ell _2}}}{P_{{\ell _1}}}(x + u){P_{{\ell _2}}}(y + u)Q(v + {\theta _2}x)Q(v + {\theta _2}y)dudv{\bigg|_{x = y = 0}}}  \bigg].   
\end{align}
\end{theorem}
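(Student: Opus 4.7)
The plan is to adapt the ratios-conjecture technique of Bui--Conrey--Young \cite{bcy} and Young \cite{youngsimple} to the mollifier $\psi_2$. First I rewrite $V(s)=Q(-L^{-1}d/ds)\zeta(s)$ via Cauchy's integral formula: using $\zeta^{(j)}(s)=(j!/2\pi i)\oint z^{-j-1}\zeta(s+z)\,dz$, one obtains a contour representation
\[
V(\sigma_0+it) = \frac{1}{2\pi i}\oint \zeta(\sigma_0+z+it)\,\mathcal{Q}(z)\,dz
\]
for a kernel $\mathcal Q$ built from $Q$ and the shift $R/L$, so that $|V(\sigma_0+it)|^2$ becomes a double contour integral in shift parameters $(\alpha,\beta)$ of the product $\zeta(\tfrac12+\alpha+it)\zeta(\tfrac12+\beta-it)$. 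In parallel, I open the square of $\psi_2$ into
\[
|\psi_2(\sigma_0+it)|^2 = \sum_{\ell_1,\ell_2=2}^K \sum_{h,k\le y_2}\frac{\mu(h)\mu(k)(hk)^{\sigma_0-1/2}}{h^{\sigma_0+it}k^{\sigma_0-it}}B_{\ell_1}(h)B_{\ell_2}(k),
\]
where $B_\ell(n)=\sum_{p_1\cdots p_\ell\mid n}\frac{\log p_1\cdots\log p_\ell}{\log^\ell y_2}P_\ell[n]$, the inner sum being over $\ell$-tuples of distinct primes.

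Next I substitute into $I_{22}$ and interchange integration with summation. For each $(h,k)$ and each pair of shifts $(\alpha,\beta)$, this reduces to the twisted shifted second moment
\[
\int_{\mathbb R} w(t)\zeta(\tfrac12+\alpha+it)\zeta(\tfrac12+\beta-it)\Big(\frac{k}{h}\Big)^{it}dt,
\]
whose asymptotic is available uniformly for $hk\le y_2^2<T^{1-\varepsilon}$, i.e.\ precisely in the regime $\theta_2<1/2$, and does not require Kloosterman-sum machinery. It contributes a ``diagonal'' piece from the pole at $\alpha+\beta=0$ and a ``swapped'' piece from the functional equation carrying $(T/2\pi)^{-\alpha-\beta}$. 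After substitution, the arithmetic sums become multiple contour integrals of the Dirichlet series
\[
\mathcal A(z_1,z_2;\alpha,\beta)=\sum_{h,k}\frac{\mu(h)\mu(k)B_{\ell_1}(h)B_{\ell_2}(k)\,(h,k)^{1+\alpha+\beta}}{h^{1/2+\alpha+z_1}k^{1/2+\beta+z_2}},
\]
which admits an Euler product. Factoring its $\zeta$-type polar part and shifting contours past these poles harvests the main terms; the change of variables $u=\log(y_2/n)/\log y_2$ combined with Taylor expansion of the $\mathcal Q$-kernel about the origin then reproduces $P_{\ell_j}(\cdot)$, $Q(v+\theta_2\,\cdot\,)$, and the derivative $d^2/dxdy\big|_{x=y=0}$ in \eqref{c22formula}. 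The $e^{2Rv}$ weight is an artifact of $\sigma_0=\tfrac12-R/L$, and the diagonal and swapped contributions assemble into the two summands of \eqref{c22formula}.

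The principal obstacle is the arithmetic/combinatorial analysis of $\mathcal A$. At each prime $p$ the local factor depends on which distinguished-prime slots---$\ell_1$ of them in $B_{\ell_1}$ and $\ell_2$ of them in $B_{\ell_2}$---are occupied by $p$. Sorting configurations by the number $k\in\{0,\ldots,\min(\ell_1,\ell_2)\}$ of jointly occupied slots produces the factor $\binom{\ell_1}{k}(\ell_2)_k$; each unmatched slot carries an extra logarithmic derivative that, combined with the two-sided structure of the moment, yields $2^{\ell_1+\ell_2-2k}$; and $(-1)^{\ell_1+\ell_2-2k}$ comes from the $\mu$ and $\Lambda$ signs recorded in \eqref{expansion1overzeta}. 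Verifying that these combinatorial ingredients assemble precisely as in \eqref{c22formula}, together with the uniform analytic continuation of the residual Euler product in a neighborhood of the origin, is where the main technical work lies; the remainder of the proof is bookkeeping in the style of \cite[\S 5]{bcy} and \cite{rrz01}.
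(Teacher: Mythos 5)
Your overall strategy -- shifted moments via the twisted approximate functional equation (valid for $hk \le y_2^2 = T^{2\theta_2}$ with $\theta_2 < 1/2$, as in Lemma~\ref{lemmaAFE}), opening the square of $\psi_2$, Euler-product analysis of the diagonal sum, and reassembly of the diagonal and swapped pieces before applying the $Q$-operator -- is the same as the paper's, so the skeleton is sound. The genuine gap is that the step you yourself defer as ``the main technical work'' is precisely the entire content of the paper's proof, and the one concrete mechanism you do assert is not the right one. You claim that $2^{\ell_1+\ell_2-2k}$ arises because ``each unmatched slot carries an extra logarithmic derivative \ldots combined with the two-sided structure of the moment.'' In the paper the powers of two are not produced by the unmatched slots in that way: after the two distinct-prime sums are consolidated by Lemma~\ref{lemmacombinatorics} (the generalization of Feng's Lemma 8, which is where $\binom{\ell_1}{k}(\ell_2)_k$ comes from), the $k$ common primes carry weights $\log^2 p$ at $p^{-(1+s+u)}$, are identified with $\zeta''/\zeta(1+s+u)$, i.e.\ with the generalized von Mangoldt function $\Lambda_2$, and the factors of $2$ enter through the Euler--Maclaurin summation of $\mathbf{1}*\Lambda^{*a}*\Lambda_2^{*b}$ (Lemma~\ref{lemmaEM2lambdas}); the unmatched slots instead produce differences $\zeta'/\zeta(1+s+u)-\zeta'/\zeta(1+\beta+u+z)$ of which only the extreme binomial terms $r_1=\ell_1-k$, $r_2=\ell_2-k$ survive in the main term. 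Without carrying out this chain -- the $p$-adic factorization into the local products $\Pi_1,\ldots,\Pi_5$, the contour evaluation of Lemma~\ref{integralwithlogderivative}, the weighted $n$-summation, and the diagonal-plus-swap assembly using $P_\ell(0)=0$ and \eqref{integraltrick}, which is exactly what produces the two separate summands, the factorials, and the $e^{2Rv}$, $Q(v+\theta_2 x)Q(v+\theta_2 y)$ structure of \eqref{c22formula} -- your argument does not establish the stated constants; as written it is a plausible plan rather than a proof.

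Two smaller points. The Dirichlet series you propose, $\mathcal{A}(z_1,z_2;\alpha,\beta)$ with a weight $(h,k)^{1+\alpha+\beta}$, is not the object that actually occurs: the diagonal condition from Lemma~\ref{lemmaAFE} is $hm=kn$ with the shifts attached to $m^{-(1/2+\alpha)}n^{-(1/2+\beta)}$, and the sum must be analyzed jointly in the extra Mellin variables $s,u,z$ coming from $P_{\ell_1}[h]$, $P_{\ell_2}[k]$ and $V_{\alpha,\beta}$; the gcd does not enter as a simple multiplicative weight on $(h,k)$. Also, representing $V$ by a contour kernel is a harmless variant of the paper's device of evaluating $I_{22}(\alpha,\beta)$ with free shifts and then applying $Q\left(-\frac{1}{\log T}\frac{d}{d\alpha}\right)Q\left(-\frac{1}{\log T}\frac{d}{d\beta}\right)$ at $\alpha=\beta=-R/L$, but either way you must justify that the error terms hold uniformly in the shifts (the paper does this via Cauchy's formula on circles of radius $\asymp L^{-1}$ and the maximum modulus principle), and that justification is missing from your sketch.
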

\begin{remark}
Note that in \cite{feng}, $c_{11}$, $c_{12}$ and $c_{22}$ are all mixed into one single theorem and it is not immediately clear how to separate each individual $c$-term.
\end{remark}
The smoothing argument is helpful because we can easily deduce Theorem \ref{unsmoothed} from Theorem \ref{smoothed} and so on. By having chosen $w(t)$ to satisfy conditions (a), (b) and (c) and in addition to being an upper bound for the characteristic function of the interval $[T/2,T]$, and with support $[T/2-\Delta,T+\Delta]$, we get
\[
\int_{T/2}^T |V \psi(\sigma_0+it)|^2 dt \le c(P_1,P_\ell,Q,R,\theta_1,\theta_2)\widehat{w}(0) + O(T/L).
\]
Note that $\widehat{w}(0)=T/2 + O(T/L)$. We similarly get a lower bound. Summing over dyadic segments gives the full result.
\subsection{The shift parameters $\alpha$ and $\beta$}
Rather than working directly with $V(s)$, we shall instead consider the following three general shifted integrals
\begin{align}
  {I_{11}}(\alpha ,\beta ) &= \int_{ - \infty }^\infty  {w(t)\zeta (\tfrac{1}{2} + \alpha  + it)\zeta (\tfrac{1}{2} + \beta  - it)\overline {{\psi _1}} {\psi _1}({\sigma _0} + it)dt} , \nonumber \\
  {I_{12}}(\alpha ,\beta ) &= \int_{ - \infty }^\infty  {w(t)\zeta (\tfrac{1}{2} + \alpha  + it)\zeta (\tfrac{1}{2} + \beta  - it)\overline {{\psi _1}} {\psi _2}({\sigma _0} + it)dt} , \nonumber \\
  {I_{22}}(\alpha ,\beta ) &= \int_{ - \infty }^\infty  {w(t)\zeta (\tfrac{1}{2} + \alpha  + it)\zeta (\tfrac{1}{2} + \beta  - it)\overline {{\psi _2}} {\psi _2}({\sigma _0} + it)dt}.  \nonumber  
\end{align}
The computation is now reduced to proving the following three lemmas.
\begin{lemma} \label{lemmac11}
We have
\[
I_{11} = c_{11}(\alpha,\beta)\widehat{w}(0) + O(T/L),
\]
uniformly for $\alpha,\beta \ll L^{-1}$, where
\begin{align}
{c_{11}}(\alpha ,\beta ) = 1 + \frac{1}{\theta_1}\frac{{{d^2}}}{{dxdy}} \bigg[ y_1^{ - \beta x - \alpha y}\int_0^1 \int_0^1 {T^{ - v(\alpha  + \beta )}P_1(x + u)P_1(y + u)dudv}  {\bigg|_{x = y = 0}} \bigg ].
\end{align}
\end{lemma}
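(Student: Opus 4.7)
The plan is to apply a Bui--Conrey--Young style ratios recipe to the twisted second moment of $\zeta$ that arises after opening the mollifier. Since
\[
\overline{\psi_1}\psi_1(\sigma_0+it) = \sum_{h,k\le y_1} \frac{\mu(h)\mu(k)\, P_1[h]\, P_1[k]}{(hk)^{1/2}} \left(\frac{k}{h}\right)^{it},
\]
the integral $I_{11}(\alpha,\beta)$ reduces to the double sum
\[
I_{11}(\alpha,\beta) = \sum_{h,k\le y_1} \frac{\mu(h)\mu(k)\, P_1[h]\, P_1[k]}{(hk)^{1/2}}\, M_{h,k}(\alpha,\beta),
\]
where $M_{h,k}(\alpha,\beta) = \int_{-\infty}^{\infty} w(t)\zeta(\tfrac12+\alpha+it)\zeta(\tfrac12+\beta-it)(k/h)^{it}dt$ is a shifted twisted second moment of $\zeta$.

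The next step is to invoke the known asymptotic for $M_{h,k}(\alpha,\beta)$, valid for $hk\le y_1^2\le T^{4/7-\varepsilon}$; this is due to Conrey building on the Deshouillers--Iwaniec bounds for Kloosterman sums, and that range is precisely what forces the constraint $\theta_1<4/7$. In ratios-recipe shape the asymptotic reads
\[
M_{h,k}(\alpha,\beta) = \widehat{w}(0)\bigl[\mathcal{D}_{\alpha,\beta}(h,k)+\mathcal{S}_{\alpha,\beta}(h,k)\bigr] + O(T/L),
\]
where $\mathcal{D}_{\alpha,\beta}$ is the ``diagonal'' piece carrying the pole of $\zeta(1+\alpha+\beta)$ and a multiplicative arithmetic factor in $h,k$, while $\mathcal{S}_{\alpha,\beta}$ is the swapped piece $(\alpha,\beta)\mapsto(-\beta,-\alpha)$ weighted by $\int_0^1 T^{-v(\alpha+\beta)}dv$, produced upon integrating the gamma-factor ratio $X_{\alpha,\beta}(t)$ against $w(t)$.

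Substituting back, I would execute the arithmetic double sum. Both $\mathcal{D}_{\alpha,\beta}$ and $\mathcal{S}_{\alpha,\beta}$ factor as Euler products, and the Möbius weights of $\psi_1$ supply a compensating $\zeta(1+\alpha+\beta)^{-1}$ that cancels the pole up to a holomorphic arithmetic factor. To handle the polynomial weights $P_1[h]=P_1(\log(y_1/h)/\log y_1)$ I would employ the standard device of inserting formal parameters $x,y$ via $(y_1/h)^x$ and $(y_1/k)^y$ and recovering the $P_1$'s by differentiation $d/dx$, $d/dy$ at $x=y=0$; together with the standing factor $y_1^{-\beta x - \alpha y}$ this produces the differential operator appearing in $c_{11}(\alpha,\beta)$. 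Under the substitution $u=\log(y_1/h)/\log y_1\in[0,1]$ the sum on $h$ passes to an integral with Jacobian $(\log y_1)^{-1}=(\theta_1 L)^{-1}$, which is the source of the prefactor $1/\theta_1$ and of the shape $P_1(x+u)P_1(y+u)$ inside the integrand.

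Finally, I would combine the two pieces. Individually each of $\mathcal{D}_{\alpha,\beta}$ and $\mathcal{S}_{\alpha,\beta}$ has a simple pole at $\alpha+\beta=0$ coming from $\zeta(1\pm(\alpha+\beta))$, but the residues cancel and the sum is analytic near the origin; this cancellation is the core of the ratios recipe, and it is what leaves $\int_0^1 T^{-v(\alpha+\beta)}dv$ visible as a factor inside the $d^2/dxdy$ in the statement. The constant ``$1$'' in $c_{11}(\alpha,\beta)$ reflects the baseline term in which the Möbius convolution cancels the $\zeta(1+\alpha+\beta)$ pole outright against $\zeta(\tfrac12+\alpha+it)\zeta(\tfrac12+\beta-it)$, leaving the bare measure $\widehat{w}(0)$. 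The main obstacle is the delicate bookkeeping of this polar cancellation and the verification that the analytic combination collapses into the compact closed form stated; a secondary obstacle is verifying that the error term in $M_{h,k}$ is $O(T/L)$ uniformly in $h,k\le y_1$, which is exactly what the Deshouillers--Iwaniec input under $\theta_1<4/7$ delivers.
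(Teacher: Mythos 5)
Your overall strategy matches the paper's: open the mollifier, reduce $I_{11}(\alpha,\beta)$ to a shifted, twisted second moment, keep a ``direct'' and a ``swapped'' piece, execute the $h,k$-sum with Mellin/differentiation bookkeeping, and let the two pieces cancel each other's pole at $\alpha+\beta=0$. (The paper actually proves the lemma via Young's approximate-functional-equation lemma, i.e.\ Lemma~\ref{lemmaAFE}, restricted to $\theta_1<1/2$, and only cites Conrey/Deshouillers--Iwaniec for the extension to $\theta_1<4/7$; invoking the $4/7$ twisted moment directly is a legitimate variant.) However, there is a concrete gap in your error bookkeeping. You assert $M_{h,k}(\alpha,\beta)=\widehat{w}(0)\bigl[\mathcal{D}_{\alpha,\beta}(h,k)+\mathcal{S}_{\alpha,\beta}(h,k)\bigr]+O(T/L)$ uniformly in $h,k$ and then sum over $h,k\le y_1$ with the weights $\mu(h)\mu(k)P_1[h]P_1[k](hk)^{-1/2}$. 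A per-pair error of size $T/L$ cannot be summed this way: $\sum_{h,k\le y_1}(hk)^{-1/2}\asymp y_1$, so the accumulated error would be of order $Ty_1/L$, vastly larger than the claimed $O(T/L)$. In the paper the per-pair error coming from Lemma~\ref{lemmaAFE} is $O_A(T^{-A})$ (the off-diagonal terms are absorbed there), and the $O(T/L)$ loss only appears after the full summation, the contour shift in $z$, and the Laurent/Taylor truncations; your proposal needs either such a superpolynomially small per-pair error or an argument that the errors cancel in the weighted sum, and as written it has neither.

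A secondary inaccuracy: the constant $1$ and the factor $\int_0^1 T^{-v(\alpha+\beta)}\,dv$ do not arise the way you describe. In the paper one writes $I_{11}=[I'_{11}(\alpha,\beta)+I'_{11}(-\beta,-\alpha)]+(T^{-\alpha-\beta}-1)I'_{11}(-\beta,-\alpha)+O(T/L)$. The bracketed combination is where the $1/(\alpha+\beta)$ poles cancel, and its value is $\int_0^1 2P_1'(u)P_1(u)\,du=P_1(1)^2-P_1(0)^2=1$, i.e.\ the ``$1$'' comes from the normalization of $P_1$, not from a separate baseline term in which the M\"obius convolution kills the pole outright. The $v$-integral then enters purely algebraically through the identity
\[
\frac{1-T^{-\alpha-\beta}}{(\alpha+\beta)\log y_1}=\frac{1}{\theta_1}\int_0^1 T^{-v(\alpha+\beta)}\,dv,
\]
applied to the second term, rather than from integrating the gamma-factor ratio $X_{\alpha,\beta,t}$ against $w(t)$ (that integration only produces the factor $T^{-\alpha-\beta}+O(L^{-1})$). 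These are fixable misattributions, but together with the error-term issue they mean the proposal, as it stands, does not yet constitute a proof of the lemma.
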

\begin{lemma} \label{lemmac12}
We have
\[
I_{12} = c_{12}(\alpha,\beta)\widehat{w}(0) + O(T/L),
\]
uniformly for $\alpha,\beta \ll L^{-1}$, where
\begin{align}
c_{12}(\alpha,\beta) 
   =&
   \sum\limits_{\ell  = 2}^K  \frac{{(-1)^\ell}}{{(\ell-1) !}}
   \int_0^1 {(1 - u)}^{\ell-1} P_{1}(u) P_{\ell}(u)du  \nonumber \\
   &- \frac{\theta_1-\theta_2}{\theta_1}\sum\limits_{\ell  = 2}^K  \frac{{(-1)^\ell}}{{\ell !}}\int_0^1 {{(1 - u)}^{\ell}}{P'_1}\left( {1 - (1 - u)\frac{{{\theta _2}}}{{{\theta _1}}}} \right){P_\ell }(u)du \nonumber \\
   &+
     \frac{1}{\theta_1}
    \sum\limits_{\ell  = 2}^K \frac{{(-1)^\ell}}{{\ell !}} \frac{{{d^2}}}{{dxdy}} 
    \bigg[  y_1^{-\beta x}y_2^{-\alpha y} \nonumber \\
       & \times \int_0^1\int_0^1 T^{ - v(\alpha  + \beta )} {{(1 - u)}^{\ell}}{P_1}\left( {x + 1 - (1 - u)\frac{{{\theta _2}}}{{{\theta _1}}}} \right){P_\ell }(y + u)dudv {\bigg|_{x = y = 0}} \bigg].
\end{align}
\end{lemma}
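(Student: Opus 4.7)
The strategy is to follow the Bui--Conrey--Young / ratios-conjectures template, specialised to the cross moment $I_{12}$. First I would apply the shifted approximate functional equation (Lemma~\ref{lemmaAFE}) to rewrite $\zeta(\tfrac12+\alpha+it)\zeta(\tfrac12+\beta-it)$ as a smoothly truncated ``main'' sum $\sum_{m,n} V_{\alpha,\beta}(mn,t)\, m^{-1/2-\alpha-it} n^{-1/2-\beta+it}$ plus a ``dual'' sum carrying the factor $X_{\alpha,\beta}(t)$ whose leading size is $(t/2\pi)^{-\alpha-\beta}$, up to a negligible error. Substituting the Dirichlet expansions of $\psi_1$ and $\psi_2$ then recasts $I_{12}(\alpha,\beta)$ as a fourfold sum over $h,k,m,n$ weighted by $\mu(h)\mu(k)P_1[h]$ and by the Feng coefficient $a_k=\sum_{\ell=2}^K (\log y_2)^{-\ell} P_\ell[k] \sum_{p_1\cdots p_\ell\mid k}\log p_1\cdots\log p_\ell$.

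Next, the $t$-integration localises the sum: smoothness of $w$ makes the Fourier transform $\widehat{w}(\log(hn/km)/2\pi)$ negligible unless $hn=km$ (and analogously $hm=kn$ for the dual piece). The diagonal contribution is $\widehat{w}(0)$ times a convergent arithmetic sum, while the dual contribution produces the factor $\int_0^1 T^{-v(\alpha+\beta)}\,dv$ after integrating $X_{\alpha,\beta}(t)$ against $w(t)$. Parametrising by $h=h'\delta$, $k=k'\delta$ with $(h',k')=1$ and $n=k'r$, $m=h'r$, the double sum factors as an Euler product prime by prime. I would then represent $P_1[h]$ and $P_\ell[k]$ via Mellin contour integrals
\[
P_1[h]=\frac{1}{2\pi i}\oint \frac{y_1^{x}}{h^{x}}\widetilde{P}_1(x)\frac{dx}{x},\qquad P_\ell[k]=\frac{1}{2\pi i}\oint \frac{y_2^{y}}{k^{y}}\widetilde{P}_\ell(y)\frac{dy}{y},
\]
turning the arithmetic sum into a double contour integral whose kernel is a ratio of $\zeta$-values at small shifts in $\alpha,\beta,x,y$, multiplied by an $(\zeta'/\zeta)^{\ell}$-piece induced by the $\mu*\Lambda^{*\ell}$ convolution inside $\psi_2$ and by a remainder factor that is holomorphic at the origin.

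Finally, shifting the $x$- and $y$-contours past the zeta poles extracts three residue contributions matching the three terms of the statement: a first-order residue that yields the leading $(1-u)^{\ell-1}/(\ell-1)!$ term once the Mellin inversion is unwound back into the polynomials $P_1, P_\ell$; a boundary interaction between the scales $y_1$ and $y_2$ at a secondary pole producing the $(\theta_1-\theta_2)/\theta_1$ correction involving $P_1'(1-(1-u)\theta_2/\theta_1)$; and a double pole that assembles into the $\frac{d^2}{dx\,dy}\big|_{x=y=0}$ expression, with the $v$-integral inherited from the dual piece. The main obstacle is the residue bookkeeping: the Feng coefficient $a_k$ embeds $(\zeta'/\zeta)^{\ell}$-factors into the generating series, producing a pole of order up to $\ell+1$, and these high-order residues must collapse cleanly to the three stated terms while keeping track of the differing scales $y_1^x$ versus $y_2^y$. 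The constraint $\theta_1+\theta_2<1-\varepsilon$ is precisely what allows the simultaneous contour shifts in $x$ and $y$ without crossing spurious poles from the diagonal parametrisation, so that subleading contributions genuinely vanish in the limit.
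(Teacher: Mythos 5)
Your setup is exactly the paper's: Lemma~\ref{lemmaAFE}, insertion of the two Dirichlet polynomials, Euler-product evaluation of the diagonal sum giving the ratio $\zeta(1+s+u)\zeta(1+\alpha+\beta+2z)/\bigl(\zeta(1+s+\alpha+z)\zeta(1+u+\beta+z)\bigr)$ times a $(\zeta'/\zeta)^{\ell}$-factor and a harmless arithmetical factor, then Mellin representations of $P_1,P_\ell$ and contour shifts. The gap is in the endgame, which is where all the work lies: the three terms of $c_{12}(\alpha,\beta)$ are \emph{not} three residues picked up by moving the $x$- and $y$- (i.e.\ the Mellin $s,u$-) contours past distinct poles, and a plan organized that way has no mechanism to produce them. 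After the $z$-shift (residue at $z=0$ only; the shifted contour costs $T^{1+\varepsilon}(y_1y_2/T)^{\delta}$, which — together with the hypothesis $hk\le T^{2\theta}$, $\theta<1/2$ in Lemma~\ref{lemmaAFE} — is where $\theta_1+\theta_2<1-\varepsilon$ actually enters, not ``spurious poles of the diagonal parametrisation''), one must expand $\bigl(\zeta'/\zeta(1+s+u)-\zeta'/\zeta(1+\beta+u+z)\bigr)^{\ell}$ by the binomial theorem, note that only the top term $r=\ell$ survives at main-term level, evaluate the two separated one-variable integrals by Lemma~\ref{integralwithlogderivative}, and then sum over $n$ by the Euler--Maclaurin-type Lemma~\ref{lemma36rrz01}. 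That last summation is what creates the weight $(1-u)^{\ell}$ and, because the two mollifiers live at different scales $y_1\ne y_2$, the argument $1-(1-u)\theta_2/\theta_1$ inside $P_1$; your proposal never accounts for either.

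What one then has is a \emph{single} expression for $I'_{12}(\alpha,\beta)$ carrying the singular prefactor $1/\bigl((\alpha+\beta)\log y_1\bigr)$ coming from $\zeta(1+\alpha+\beta)$. The stated three terms arise only from the decomposition $I_{12}=\bigl(I'_{12}(\alpha,\beta)+I'_{12}(-\beta,-\alpha)\bigr)+\bigl(T^{-\alpha-\beta}-1\bigr)I'_{12}(-\beta,-\alpha)+O(T/L)$: in the first bracket the $(\alpha+\beta)^{-1}$ pole is removed using $\int_0^1\bigl((1-u)^{\ell}P_1P_\ell\bigr)'\,du=0$ (legitimate since $P_1(0)=P_\ell(0)=0$), and the mismatch between $\log y_1$ and $\log y_2=\log y_1-(\theta_1-\theta_2)\log T$ is what generates simultaneously the $(1-u)^{\ell-1}/(\ell-1)!$ term and the $(\theta_1-\theta_2)/\theta_1$ correction with $P_1'$; the second bracket, via \eqref{integraltrick}, produces the $\tfrac{d^2}{dx\,dy}$ term with the $v$-integral. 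So the $v$-integral is not obtained by ``integrating $X_{\alpha,\beta,t}$ against $w$'', and the first two terms are not residues at a ``first-order'' and a ``secondary'' pole. Without this diagonal-plus-dual bookkeeping your plan cannot cancel the $(\alpha+\beta)^{-1}$ singularity, nor does it explain the provenance of the $(\theta_1-\theta_2)/\theta_1$ term — that is the genuine missing idea, even though the surrounding scaffolding matches the paper.
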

\begin{lemma} \label{lemmac22}
We have
\[
I_{22} = c_{22}(\alpha,\beta)\widehat{w}(0) + O(T/L),
\]
uniformly for $\alpha,\beta \ll L^{-1}$, where
\begin{align}
c_{22}(\alpha,\beta) 
=& 
 \sum\limits_{{\ell _1} = 2}^K \sum\limits_{{\ell _2} = 2}^K  
  \sum\limits_{k = 0}^{\min ({\ell _1},{\ell _2})} ( - 1)^{\ell_1+\ell_2-2k}  \binom{\ell_1}{k} (\ell _2)_k \nonumber \\
  &\times \frac{2^{\ell_1+\ell_2-2k}}{(\ell_1+\ell_2-1)!} \int_0^1 {(1 - u)}^{\ell_1+\ell_2-1} P_{\ell _1}(u) P_{\ell _2}(u) du \nonumber \\
&+\frac{1}{\theta_2}
  \sum\limits_{{\ell _1} = 2}^K \sum\limits_{{\ell _2} = 2}^K  
  \sum\limits_{k = 0}^{\min ({\ell _1},{\ell _2})} {\binom{\ell_1}{k}{{({\ell _2})}_k}}  ( - 1)^{\ell_1+\ell_2-2k} \frac{2^{\ell_1+\ell_2-2k}}{(\ell_1+\ell_2)!}  \nonumber \\
   &\quad \times \frac{{{d^2}}}{{dxdy}}\bigg[y_2^{-\beta x -\alpha y}\int_0^1 \int_0^1T^{ - v(\alpha  + \beta )}{{{(1 - u)}^{\ell_1+\ell_2 }}{P_{{\ell _1}}}(x + u){P_{{\ell _2}}}(y + u)dudv} {\bigg|_{x = y = 0}}\bigg].
\end{align}
\end{lemma}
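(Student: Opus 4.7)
The plan is to follow the delta-method / ratios-technique approach of \cite{bcy} adapted to multi-piece mollifiers in \cite{rrz01}. I first open the square,
\begin{align*}
\overline{\psi_2}\psi_2(\sigma_0+it)=\sum_{m,n\leq y_2}\frac{\mu(m)\mu(n)(mn)^{\sigma_0-1/2}}{m^{\sigma_0-it}n^{\sigma_0+it}}\sum_{\ell_1,\ell_2=2}^{K}\frac{P_{\ell_1}[m]P_{\ell_2}[n]}{(\log y_2)^{\ell_1+\ell_2}}\Sigma_{\ell_1}(m)\Sigma_{\ell_2}(n),
\end{align*}
with $\Sigma_\ell(r):=\sum_{p_1\cdots p_\ell\mid r}\log p_1\cdots\log p_\ell$. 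Interchanging the finite sums with the $t$-integration reduces $I_{22}(\alpha,\beta)$ to a double sum over $(m,n)$ of the twisted shifted second moment
\begin{align*}
\int_{-\infty}^\infty w(t)\,\zeta(\tfrac{1}{2}+\alpha+it)\zeta(\tfrac{1}{2}+\beta-it)\Bigl(\tfrac{m}{n}\Bigr)^{-it}\,dt.
\end{align*}
To this I apply the standard shifted-moment formula (Lemma~\ref{lemmaAFE} together with the twisted-moment treatment in \cite{bcy}): the main term equals $\widehat{w}(0)[Z_{\alpha,\beta}(m,n)+\mathcal{X}(\alpha,\beta)Z_{-\beta,-\alpha}(m,n)]$, modulo an error of size $O(T^{1/2+\varepsilon}\sqrt{mn})$, where $Z_{\alpha,\beta}(m,n)$ admits an Euler product and $\mathcal{X}(\alpha,\beta)$ is the usual Gamma-ratio. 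Since $mn\leq y_2^2<T^{1-\varepsilon}$, the summed error is absorbed into the $O(T/L)$.

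\medskip

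The proof then splits into two parallel evaluations, one for each piece. Using the identity $\mu(r)\Sigma_\ell(r)=(-1)^\ell(\mu*\Lambda^{*\ell})(r)$ recorded in the introduction, the Dirichlet generating function of the $m$-sum (stripped of the polynomial weight $P_{\ell_1}[m]$) is an expression in $(-\zeta'/\zeta)^{\ell_1}/\zeta$, and analogously for $n$. I encode the polynomial truncations as Mellin integrals
\begin{align*}
P_\ell[r]=\frac{1}{2\pi i}\int_{(1)}\Bigl(\frac{y_2}{r}\Bigr)^{z}\widetilde{P}_\ell(z)\frac{dz}{z},
\end{align*}
so each piece becomes a multiple contour integral in variables $(z_m,z_n)$ whose integrand is a product of zeta-ratios, the factor $y_2^{z_m+z_n}$, the Mellin transforms $\widetilde{P}_{\ell_1}(z_m)\widetilde{P}_{\ell_2}(z_n)$, and an arithmetic factor $\mathcal{A}(\alpha,\beta;z_m,z_n)$ holomorphic at the origin. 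The crucial local computation occurs at primes $p$ dividing both $m$ and $n$: expanding the corresponding Euler factor and isolating the distinct-prime contribution produces the combinatorial weights $\binom{\ell_1}{j}(\ell_2)_j\,2^{\ell_1+\ell_2-2j}$ of the statement, where $j$ counts primes shared between the two von Mangoldt convolutions, the binomial chooses which of the $\ell_1$ slots are shared, the Pochhammer counts the ordered placements among the $\ell_2$ slots, and the factor of $2$ per shared prime records whether the $\log p$ is assigned to $\Sigma_{\ell_1}$ or $\Sigma_{\ell_2}$.

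\medskip

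After these encodings I shift the $(z_m,z_n)$-contours past the only contributing singularity, a simple pole at $z_m+z_n+\alpha+\beta=0$ coming from the zeta-ratio. Its residue yields the first block of $c_{22}(\alpha,\beta)$, namely the single integral $\int_0^1(1-u)^{\ell_1+\ell_2-1}P_{\ell_1}(u)P_{\ell_2}(u)\,du$, after the substitution $z_m+z_n\mapsto u\log y_2$. The residual contour integral, combined with the corresponding contribution from the swapped term with Gamma-ratio $\mathcal{X}(\alpha,\beta)$ (which injects the factor $T^{-v(\alpha+\beta)}$ after its own contour shift), rescales under $z_m=-x\log y_2$, $z_n=-y\log y_2$ to reproduce the $d^2/(dxdy)$ block of $c_{22}(\alpha,\beta)$, with prefactor $1/\theta_2$ coming from the Jacobian of this rescaling since $\log y_2=\theta_2 L$.

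\medskip

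The hardest part is the combinatorial bookkeeping at the Euler-factor level, where prime by prime one tracks the cases $p\nmid mn$, $p\mid m$ only, $p\mid n$ only, and $p\mid\gcd(m,n)$, and in the last case further distributes the $\log p$ between $\Sigma_{\ell_1}$ and $\Sigma_{\ell_2}$. Simultaneously reconciling the sign $(-1)^{\ell_1+\ell_2-2j}$, the power $2^{\ell_1+\ell_2-2j}$, and the kernel $(1-u)^{\ell_1+\ell_2-1}$ produced by the contour residue is the delicate step; the off-diagonal error from the shifted-moment formula is absorbed uniformly by the assumption $\theta_2<1/2$.
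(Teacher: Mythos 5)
Your high-level architecture (open the square, apply the smoothed shifted twisted second moment, Mellin-encode the $P_\ell$, do a local Euler-product analysis, shift contours) is the same as the paper's, but the proposal breaks down exactly at the steps that produce the stated formula. First, the combinatorial bookkeeping: in the paper the weight $\binom{\ell_1}{k}(\ell_2)_k=k!\binom{\ell_1}{k}\binom{\ell_2}{k}$ comes from Lemma~\ref{lemmacombinatorics}, which merges the two prime sums and shows that each of the $k$ shared primes carries $\log^2 p$; after the $p$-adic analysis these shared primes become the factor $\bigl(\tfrac{\zeta''}{\zeta}(1+s+u)\bigr)^k$, i.e.\ a $\Lambda_2^{*k}$-convolution, and the powers of two together with the kernel $(1-u)^{\ell_1+\ell_2}$ only appear later, from the Euler--MacLaurin evaluation (Lemma~\ref{lemmaEM2lambdas}) of $\sum_{n\le y_2}(\mathbf{1}*\Lambda_2^{*k}*\Lambda^{*r_1+r_2})(n)/n$ and the observation that only the top terms $r_1=\ell_1-k$, $r_2=\ell_2-k$ survive. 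Your explanation that ``a factor of $2$ per shared prime'' yields $2^{\ell_1+\ell_2-2j}$ is internally inconsistent (a binary choice per shared prime would give $2^{j}$, and the exponent in the statement counts the \emph{unshared} slots), and you never introduce $\zeta''/\zeta$, $\Lambda_2$, or any summation lemma that actually generates these factors. Likewise the first block with $(\ell_1+\ell_2-1)!$ does not come from a residue: in the paper it arises from adding $I'_{220}(\alpha,\beta)+I'_{220}(-\beta,-\alpha)$ and integrating by parts using $P_{\ell_1}(0)=P_{\ell_2}(0)=0$. These are precisely the ``delicate steps'' you defer, so the formula is not actually derived.

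Second, the analytic skeleton is misidentified in two places. The only residue taken is at $z=0$ from $G(z)/z$ (the pole of $\zeta(1+\alpha+\beta+2z)$ is cancelled by $G$); the integrand has no simple pole at $z_m+z_n+\alpha+\beta=0$ — the relevant singularity in the Mellin variables sits at $s+u=0$ and is of high order because of the $\zeta''/\zeta$ and $\zeta'/\zeta$ powers, and it is handled via Dirichlet-series truncation and Lemma~\ref{integralwithlogderivative}, not a one-line residue. The factor $T^{-v(\alpha+\beta)}$ and the prefactor $1/\theta_2$ come from the elementary identity \eqref{integraltrick} applied to $(T^{-\alpha-\beta}-1)I'_{220}(-\beta,-\alpha)$, not from a Jacobian or a second contour shift. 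Finally, your error absorption does not close: a per-term error $O(T^{1/2+\varepsilon}\sqrt{mn})$ summed against coefficients of size $(mn)^{-1/2+o(1)}$ over $m,n\le y_2$ gives $O(T^{1/2+2\theta_2+\varepsilon})$, which exceeds the main term as soon as $\theta_2>1/4$. The paper instead uses the $O_A(T^{-A})$ off-diagonal bound of Lemma~\ref{lemmaAFE} (valid since $h_1h_2\le T^{2\theta_2}$ with $\theta_2<1/2$) together with the contour-shift bound $T^{1+\varepsilon}(y_2^2/T)^{\delta}\ll T^{1-\varepsilon}$, which is where the restriction $\theta_2<1/2$ genuinely enters; your write-up should be reorganized around those two estimates.
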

To get Theorems \ref{c11theorem}, \ref{c12theorem} and \ref{c22theorem} we use the following technique. Let $I_{\star}$ denote either of the integrals in questions, and note that
\[{I_ {\star} } = Q\left( { - \frac{1}{{\log T}}\frac{d}{{d\alpha }}} \right)Q\left( { - \frac{1}{{\log T}}\frac{d}{{d\beta }}} \right){I_{\star}}(\alpha ,\beta ){\bigg|_{\alpha  = \beta  = -R/L}}.\]
Since $I_{\star}(\alpha ,\beta )$ and $c_{\star}(\alpha ,\beta )$ are holomorphic with respect to $\alpha,\beta$ small, the derivatives appearing in the equation above can be obtained as integrals of radii $\asymp L^{-1}$ around the points $-R/L$, using Cauchy's integral formula. Since the error terms hold uniformly on these contours, the same error terms that hold for $I_{\star}(\alpha ,\beta )$ also hold for $I_{\star}$. That the above differential operator on $c_{\star}(\alpha ,\beta )$ does indeed give $c_{\star}$ follows from
\[
Q \bigg( \frac{-1}{\log T} \frac{d}{d\alpha} X^{-\alpha} \bigg) = Q\bigg( \frac{\log X}{\log T}\bigg) X^{-\alpha}.
\]
Note that from the above equation we get
\begin{align}
  Q\left( {\frac{{ - 1}}{{\log T}}\frac{d}{{d\alpha }}} \right)Q\left( {\frac{{ - 1}}{{\log T}}\frac{d}{{d\beta }}} \right)y_1^{ - \beta x}y_2^{ - \alpha y}{T^{ - v(\alpha  + \beta )}} &= Q\left( {\frac{{\log y_2^y{T^v}}}{{\log T}}} \right)Q\left( {\frac{{\log y_1^x{T^v}}}{{\log T}}} \right)y_1^{ - \beta x}y_2^{ - \alpha y}{T^{ - v(\alpha  + \beta )}} \nonumber \\
   &= Q({\theta _2}y + v)Q({\theta _1}x + v)y_1^{ - \beta x}y_2^{ - \alpha y}{T^{ - v(\alpha  + \beta )}}, \nonumber
\end{align}
as well as
\begin{align}
  Q\left( {\frac{{ - 1}}{{\log T}}\frac{d}{{d\alpha }}} \right)Q\left( {\frac{{ - 1}}{{\log T}}\frac{d}{{d\beta }}} \right) y_2^{-\beta x - \alpha y} T^{-v(\alpha+\beta)} &= Q\left( {\frac{{\log y_2^y{T^v}}}{{\log T}}} \right){(y_2^y{T^v})^{ - \alpha }}Q\left( {\frac{{\log y_2^x{T^v}}}{{\log T}}} \right){(y_2^x{T^v})^{ - \beta }} \nonumber \\
   &= Q({\theta _2}y + v)Q({\theta _2}x + v)y_2^{ - \beta x - \alpha y}{T^{ - v(\alpha  + \beta )}}. \nonumber  
\end{align}
Hence using the differential operators $Q((-1/\log T) d/d\alpha)$ and $Q((-1/\log T) d/d\beta)$ on the last line of $c_{12}(\alpha,\beta)$ we get
\begin{align}
\frac{{{d^2}}}{{dxdy}}\bigg[y_1^{ - \beta x}y_2^{ - \alpha y}\int_0^1 \int_0^1 {T^{ - v(\alpha  + \beta )}}{{(1 - u)}^\ell }&{P_1}\left( {x + 1 - (1 - u)\frac{{{\theta _2}}}{{{\theta _1}}}} \right) \nonumber \\
&{P_\ell }(y + u)Q({\theta _2}y + v)Q({\theta _1}x + v)dudv{\bigg|_{x = y = 0}} \bigg] . \nonumber
\end{align}
Theorem \ref{c12theorem} then follows by setting $\alpha = \beta = -R/L$ and using $T^{z/L}=T^{z/\log T}=e^z$. Similarly, when we use the differential operators $Q((-1/\log T) d/d\alpha)$ and $Q((-1/\log T) d/d\beta)$ on the last line of $c_{22}(\alpha,\beta)$ it becomes
\[\frac{{{d^2}}}{{dxdy}}\bigg[{e^{R{\theta _2}(x + y)}}\int_0^1 \int_0^1 {{e^{2Rv}}{{(1 - u)}^{{\ell _1} + {\ell _2}}}{P_{{\ell _1}}}(x + u){P_{{\ell _2}}}(y + u)Q(v + {\theta _2}x)Q(v + {\theta _2}y)dudv{\bigg|_{x = y = 0}}} \bigg].\]
The same substitutions yield Theorem \ref{c22theorem}.
\section{Preliminary results}
\subsection{Results from complex analysis}
The following results are needed throughout the paper.
\begin{lemma} \label{lemmaAFE}
Suppose that $w(t)$ satisfies conditions \textnormal{(a)}, \textnormal{(b)} and \textnormal{(c)} and that $h$, $k$ are positive integers with $hk \le T^{2 \theta}$ with $\theta < 1/2$, and $\alpha,\beta \ll L^{-1}$. Moreover, set
\[ 
{g_{\alpha ,\beta }}(s,t) = {\pi ^{ - s}}\frac{{\Gamma (\tfrac{1}{2}(\tfrac{1}{2} + \alpha  + s + it))\Gamma (\tfrac{1}{2}(\tfrac{1}{2} + \beta  + s - it))}}{{\Gamma (\tfrac{1}{2}(\tfrac{1}{2} + \alpha  + it))\Gamma (\tfrac{1}{2}(\tfrac{1}{2} + \beta  - it))}},
\]
as well as
\[{X_{\alpha ,\beta ,t}} = {\pi ^{\alpha  + \beta }}\frac{{\Gamma (\tfrac{1}{2}(\tfrac{1}{2} - \alpha  - it))\Gamma (\tfrac{1}{2}(\tfrac{1}{2} - \beta  + it))}}{{\Gamma (\tfrac{1}{2}(\tfrac{1}{2} + \alpha  + it))\Gamma (\tfrac{1}{2}(\tfrac{1}{2} + \beta  - it))}}.\]
Then one has
\begin{align}
  \int_{ - \infty }^\infty  w(t){{\left( {\frac{h}{k}} \right)}^{ - it}} &\zeta (\tfrac{1}{2} + \alpha  + it)\zeta (\tfrac{1}{2} + \beta  - it)dt  = \sum\limits_{hm = kn} {\frac{1}{{{m^{1/2 + \alpha }}{n^{1/2 + \beta }}}}\int_{ - \infty }^\infty  {{V_{\alpha ,\beta }}(mn,t)w(t)dt} }  \nonumber \\
   &+ \sum\limits_{hm = kn} {\frac{1}{{{m^{1/2 - \beta }}{n^{1/2 - \alpha }}}}\int_{ - \infty }^\infty  {{V_{ - \beta , - \alpha }}(mn,t){X_{\alpha ,\beta ,t}}w(t)dt} }  + {O_{A}}({T^{ - A}}), \nonumber  
\end{align}
where 
\[
V_{\alpha ,\beta }(x,t) = \frac{1}{{2\pi i}}\int_{(1)} {\frac{{G(s)}}{s}{g_{\alpha ,\beta }}(s,t){x^{ - s}}ds}, \quad G(s)=e^{s^2}p(s) \quad \textnormal{and} \quad p(s)=\frac{(\alpha+\beta)^2-(2s)^2}{(\alpha+\beta)^2}.
\]
\end{lemma}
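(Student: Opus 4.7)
The plan is to first establish a twisted approximate functional equation for the shifted zeta-product
\[
Z(t) := \zeta(\tfrac{1}{2}+\alpha+it)\zeta(\tfrac{1}{2}+\beta-it),
\]
and then multiply by $w(t)(h/k)^{-it}$ and integrate in $t$, extracting the diagonal contribution $hm = kn$. Concretely, I would start from the contour integral
\[
J(t) := \frac{1}{2\pi i}\int_{(2)} \zeta(\tfrac{1}{2}+\alpha+s+it)\zeta(\tfrac{1}{2}+\beta+s-it)\,g_{\alpha,\beta}(s,t)\frac{G(s)}{s}\,ds.
\]
Expanding both zeta factors in Dirichlet series on the line $\operatorname{Re}(s)=2$ identifies $J(t)$ with the first sum appearing in the statement. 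I would then shift the contour to $\operatorname{Re}(s)=-2$. The factor $p(s)=((\alpha+\beta)^2-(2s)^2)/(\alpha+\beta)^2$ in $G$ is chosen precisely to cancel the potential poles of $g_{\alpha,\beta}$ at $s=\pm(\alpha+\beta)/2$, so the only residue encountered is at $s=0$, which contributes $Z(t)$ since $G(0)=g_{\alpha,\beta}(0,t)=1$.

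On the shifted line, I would change variables $s\mapsto -s$ (using that $G$ is even) and apply the functional equation $\zeta(\tfrac{1}{2}+w)=\chi(\tfrac{1}{2}+w)\zeta(\tfrac{1}{2}-w)$ to both zeta factors. The gamma-ratios in $g_{\alpha,\beta}(-s,t)$ combine with the two $\chi$-factors to produce the Archimedean factor $X_{\alpha,\beta,t}$ times $g_{-\beta,-\alpha}(s,t)$, and re-expanding the resulting integrand in Dirichlet series on $\operatorname{Re}(s)=2$ yields the second sum, so that
\[
Z(t) = \sum_{m,n}\frac{V_{\alpha,\beta}(mn,t)}{m^{1/2+\alpha+it}n^{1/2+\beta-it}} + X_{\alpha,\beta,t}\sum_{m,n}\frac{V_{-\beta,-\alpha}(mn,t)}{m^{1/2-\beta-it}n^{1/2-\alpha+it}}.
\]

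Next I would multiply this identity by $w(t)(h/k)^{-it}$ and integrate. Swapping sum and integral is justified by the rapid decay $V_{\alpha,\beta}(x,t)\ll_A (x/(1+|t|))^{-A}$ coming from pushing the contour defining $V$ far to the right and exploiting the $e^{s^2}$ factor in $G(s)$, which effectively truncates the sums to $mn\ll T^{1+\varepsilon}$. The diagonal terms $hm=kn$ produce precisely the two main sums stated in the lemma. For the off-diagonal terms, the integral over $t$ carries an oscillating factor $(nk/(mh))^{it}$. Repeated integration by parts, using property (c) that $w^{(j)}(t)\ll_j \Delta^{-j}$ with $\Delta=T/L$, and that $V_{\alpha,\beta}(mn,t)$ is smooth in $t$ with $t$-derivatives controlled by Stirling on the gamma-ratios, gains a factor $(\Delta|\log(nk/(mh))|)^{-j}$ at each step. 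Since $hm\neq kn$ and both are at most $T^{\theta}\cdot T^{1/2+\varepsilon}$, one has $|\log(nk/(mh))|\gg T^{-1-2\theta-\varepsilon}$, so $\Delta|\log(nk/(mh))|\gg T^{1-2\theta-\varepsilon}$ is a positive power of $T$. Taking $j$ arbitrarily large gives the claimed $O_A(T^{-A})$ error.

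The main obstacle is the off-diagonal bookkeeping: one must dyadically decompose $m,n$, apply Stirling uniformly in $t\asymp T$ to both $g_{\alpha,\beta}$ and $X_{\alpha,\beta,t}$ so that the integration-by-parts loses at most a fixed power of $L$, and verify that the saving survives for every dyadic block within the effective range $mn\ll T^{1+\varepsilon}$. This is precisely where the hypothesis $hk\le T^{2\theta}$ with $\theta<1/2$ is used, and why the paper must impose $\theta_1+\theta_2<1$ when two mollifier pieces of respective lengths $T^{\theta_1},T^{\theta_2}$ are combined.
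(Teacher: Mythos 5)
Your overall strategy is the same as the paper's: the paper does not prove Lemma \ref{lemmaAFE} itself but cites Lemma 5 of Young's short proof of Levinson's theorem, and your derivation (contour integral for the shifted product, shift to the left picking up $s=0$, functional equation on the shifted line giving the $X_{\alpha,\beta,t}$ term, then integration against $w(t)(h/k)^{-it}$ with the off-diagonal killed by repeated integration by parts) is exactly that argument. However, the quantitative step that actually uses the hypothesis $hk\le T^{2\theta}$, $\theta<1/2$, is garbled as written. From your stated bound $|\log(nk/(mh))|\gg T^{-1-2\theta-\varepsilon}$ and $\Delta=T/L$ one only gets $\Delta|\log(nk/(mh))|\gg T^{-2\theta-\varepsilon}L^{-1}$, which is \emph{not} a positive power of $T$; the asserted $T^{1-2\theta-\varepsilon}$ does not follow, so the chain of inequalities is internally inconsistent and, taken literally, the off-diagonal estimate does not close. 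The correct bookkeeping is: the only delicate range is the near-diagonal one $|\log(nk/(mh))|\le 1$ (elsewhere integration by parts already saves $\Delta^{-j}$), and there $hm\asymp kn$, so $\max(hm,kn)\ll (hk\cdot mn)^{1/2}\ll T^{\theta+1/2+\varepsilon}$ by $hk\le T^{2\theta}$ and the effective truncation $mn\ll T^{1+\varepsilon}$; hence $|\log(nk/(mh))|\gg 1/\max(hm,kn)\gg T^{-\theta-1/2-\varepsilon}$ and $\Delta|\log(nk/(mh))|\gg T^{1/2-\theta-\varepsilon}$, a positive power of $T$ precisely because $\theta<1/2$. (Note also that ``both $hm,kn\le T^{\theta+1/2+\varepsilon}$'' is itself only true in this near-diagonal regime, since individually $m$ can be as large as $T^{1+\varepsilon}$ and $h$ as large as $T^{2\theta}$.)

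A second, smaller correction: $p(s)$ is not there to cancel poles of $g_{\alpha,\beta}$, which has no poles at $s=\pm(\alpha+\beta)/2$; its gamma poles (and the poles of the two zeta factors at $s=\tfrac12-\alpha-it$ and $s=\tfrac12-\beta+it$ that are crossed in the shift) lie at imaginary parts of size $t$ and are rendered $O(e^{-cT^2})$ by the factor $e^{s^2}$ — negligible, but not absent, so they should be mentioned rather than claimed not to occur. The vanishing $G(\mp(\alpha+\beta)/2)=0$ is imposed for later use, in the evaluations of $I_{11}$, $I_{12}$ and $I_{22}$, where it suppresses the pole of $\zeta(1+\alpha+\beta+2z)$ at $z=-(\alpha+\beta)/2$ when the $z$-contour is moved left; within the present lemma $p$ is harmless since $p(0)=1$. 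With these two repairs your proposal is a faithful reconstruction of the cited proof.
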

\begin{proof}
See Lemma 5 of \cite{youngsimple}. They key point is that non-diagonal terms $hm \ne kn$ can safely be absorbed in the error terms.
\end{proof}
\begin{lemma} \label{integralwithlogderivative}
Suppose $0<\delta \asymp L^{-1}$, $\beta \ll L^{-1}$ and $\beta<\delta$. For some $\nu \asymp (\log \log y)^{-1}$ we have
\begin{align}
  \Upsilon &:= \frac{1}{{2\pi i}}\int_{(\delta )} {\frac{1}{{\zeta (1 + \beta  + u)}}{{\left( {\frac{{\zeta '}}{\zeta }(1 + \beta  + u)} \right)}^{\ell  - r}}{{\left( {\frac{{{y_{\star}}}}{n}} \right)}^u}\frac{{du}}{{{u^{j + 1}}}}}  \nonumber \\
  &= {( - 1)^{\ell  - r}}\frac{1}{{2\pi i}}\oint {{{(\beta  + u)}^{1 - \ell  + r}}{{\left( {\frac{{{y_{\star}}}}{n}} \right)}^u}\frac{{du}}{{{u^{j + 1}}}}}  + O({L^{{\ell-r-2+j}}}) + O\bigg( {{{\bigg( {\frac{{{y_{\star}}}}{n}} \bigg)}^{ - \nu }}{L^\varepsilon }} \bigg), \nonumber  
\end{align}
where $y_{\star}\geq n>0$ and the contour is a circle of radius one enclosing the origin and $-\beta$.
\end{lemma}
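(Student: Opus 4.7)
The plan is to shift the contour of integration from the vertical line $\operatorname{Re}(u)=\delta$ to the vertical line $\operatorname{Re}(u)=-\nu$, capturing the poles at $u=0$ and $u=-\beta$ that lie in between, and then to bound the remaining integral by exploiting the classical zero-free region of $\zeta(s)$. Write the integrand as $F(u)(y_\star/n)^{u}/u^{j+1}$ with
\[
F(u) = \frac{1}{\zeta(1+\beta+u)}\left(\frac{\zeta'}{\zeta}(1+\beta+u)\right)^{\ell-r}.
\]
The Laurent expansions $\zeta(1+z)=z^{-1}+O(1)$ and $(\zeta'/\zeta)(1+z)=-z^{-1}+O(1)$ near $z=0$ show, after multiplying, that $F(u)=(-1)^{\ell-r}(\beta+u)^{1-\ell+r}+E(u)$, where $E(u)$ is holomorphic at $u=0$ and meromorphic at $u=-\beta$ with a pole of order at most $\ell-r-2$ there. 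This identifies the leading singular part of $F$ with the integrand appearing in the proposed main term.

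Applying the residue theorem to deform $(\delta)$ into $(-\nu)$, one obtains
\[
\Upsilon \;=\; \frac{1}{2\pi i}\oint_{|u|=1} F(u)\left(\frac{y_\star}{n}\right)^{u}\frac{du}{u^{j+1}} \;+\; \frac{1}{2\pi i}\int_{(-\nu)} F(u)\left(\frac{y_\star}{n}\right)^{u}\frac{du}{u^{j+1}},
\]
where the circle $|u|=1$ encloses both $u=0$ and $u=-\beta$ and no other singularity of the integrand. Splitting $F=(-1)^{\ell-r}(\beta+u)^{1-\ell+r}+E$ inside the unit-circle integral yields the claimed main term plus the sum of the residues of $E(u)(y_\star/n)^{u}/u^{j+1}$ at $u=0$ and $u=-\beta$. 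The bound $|E(0)|\ll L^{\ell-r-2}$, together with the fact that $j$ differentiations of $(y_\star/n)^{u}$ at $u=0$ produce a factor of size at most $(\log(y_\star/n))^{j}\ll L^{j}$, gives an $O(L^{\ell-r-2+j})$ contribution from $u=0$. A parallel computation at $u=-\beta$, using that $E$ has a pole of order at most $\ell-r-2$ there while $|u|^{-(j+1)}$ evaluated at $u=-\beta$ is of size $L^{j+1}$, produces a matching $O(L^{\ell-r-2+j})$ contribution.

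The main obstacle is the control of the integral on the shifted line $(-\nu)$. For this I would appeal to a standard zero-free region of the Riemann zeta-function, of the type $\sigma>1-c/(\log(|t|+2))^{2/3}(\log\log(|t|+3))^{1/3}$, together with the bounds $1/\zeta(s)\ll L^{\varepsilon}$ and $(\zeta'/\zeta)(s)\ll L^{\varepsilon}$ valid there; the choice $\nu\asymp(\log\log y)^{-1}$ is precisely calibrated so that $1+\beta+u$ stays inside the zero-free region along $\operatorname{Re}(u)=-\nu$ uniformly in $t$. Pulling out the factor $|(y_\star/n)^{u}|=(y_\star/n)^{-\nu}$ on this line and using the absolute convergence provided by $|u|^{-(j+1)}$ (which holds for any $j\geq 0$) then yields the remaining error $O((y_\star/n)^{-\nu}L^{\varepsilon})$, completing the argument.
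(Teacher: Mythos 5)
Your overall strategy (push the contour to the left, collect the residues at $u=0$ and $u=-\beta$, identify the main term via the Laurent expansions of $1/\zeta$ and $\zeta'/\zeta$ at $1$, and beat the remainder with a zero-free region) is the same as the paper's, but there is a genuine gap in the execution: you shift to the \emph{entire} vertical line $\operatorname{Re}(u)=-\nu$ and assert that $1+\beta+u$ stays inside the zero-free region \emph{uniformly in} $t$. This is false. Every unconditional zero-free region (classical or Vinogradov--Korobov) narrows as $|t|\to\infty$, so a line of fixed abscissa $1-\nu$ with $\nu\asymp(\log\log y)^{-1}$ exits the region as soon as $|t|$ exceeds roughly $\exp(c/\nu)$, i.e.\ a fixed power of $\log y$ (or its Vinogradov--Korobov analogue). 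Beyond that height you have neither the bounds $1/\zeta\ll L^{\varepsilon}$, $\zeta'/\zeta\ll L^{\varepsilon}$ nor any guarantee that the deformation does not cross zeros of $\zeta(1+\beta+u)$, i.e.\ poles of the integrand, so even the residue identity on which the whole argument rests is unjustified unconditionally. The paper's proof is built precisely to avoid this: the contour is truncated at a finite height $Y$, runs along $\operatorname{Re}(u)=0$ for $|t|\ge Y$ (where $|u|^{-(j+1)}$ supplies convergence), and only dips left to $\operatorname{Re}(u)=-c/\log Y$ for $|t|\le Y$, staying inside the classical zero-free region; choosing $Y\asymp\log y_{\star}$ then produces the saving $(y_{\star}/n)^{-\nu}$ with $\nu\asymp 1/\log Y\asymp(\log\log y_{\star})^{-1}$, which is exactly where the $\nu$ in the statement comes from. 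To repair your proof you must replace the infinite vertical line by such a truncated, $t$-dependent contour.

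A secondary, fixable issue is the residue bookkeeping. Estimating the two residues separately by ``$|E(0)|\ll L^{\ell-r-2}$ times $L^{j}$'' and by ``$|u|^{-(j+1)}$ evaluated at $u=-\beta$ is of size $L^{j+1}$'' is not rigorous: the residue at the order-$(j+1)$ pole at $u=0$ involves $j$ derivatives of $E$, not just $E(0)$, and the hypothesis only gives $\beta\ll L^{-1}$, so $|\beta|$ may be far smaller than $L^{-1}$ and the two singularities can nearly coalesce, making any pointwise ``evaluation at $-\beta$'' useless. The paper avoids splitting them: it encloses both $0$ and $-\beta$ in a single small contour of radius $\asymp L^{-1}$, substitutes the Laurent expansions there, and bounds the non-leading terms by a direct estimate of that contour integral; you should bound your $E$-contribution the same way (e.g.\ via Cauchy's estimate on that common circle) rather than pole by pole.
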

\begin{proof}
This follows a similar procedure to Lemma 6.1 of \cite{bcy} where the zero-free region of $\zeta$ is used. Let $Y=o(T)$ be a large parameter to be chosen later. By Cauchy's theorem, $\Upsilon$ is equal to the sum of residues at $u=0$ and $u=-\beta$ plus integrals over the line segments $\gamma_1 = \{ s=it : t \in \R, |t| \ge Y \}$, $\gamma_2 = \{ s=\sigma \pm iY: -c/\log Y \le \sigma \le 0 \}$, and $\gamma_3 = \{ s= - c/\log Y + it: |t| \le Y \}$, where $c$ is some fixed positive positive constant such that $\zeta(1+\beta+u)$ has no zeros in the region on the right-hand side of the contour determined by the $\gamma_i$'s.
\begin{figure}[ht!]
\centering
\includegraphics[width=.35\textwidth]{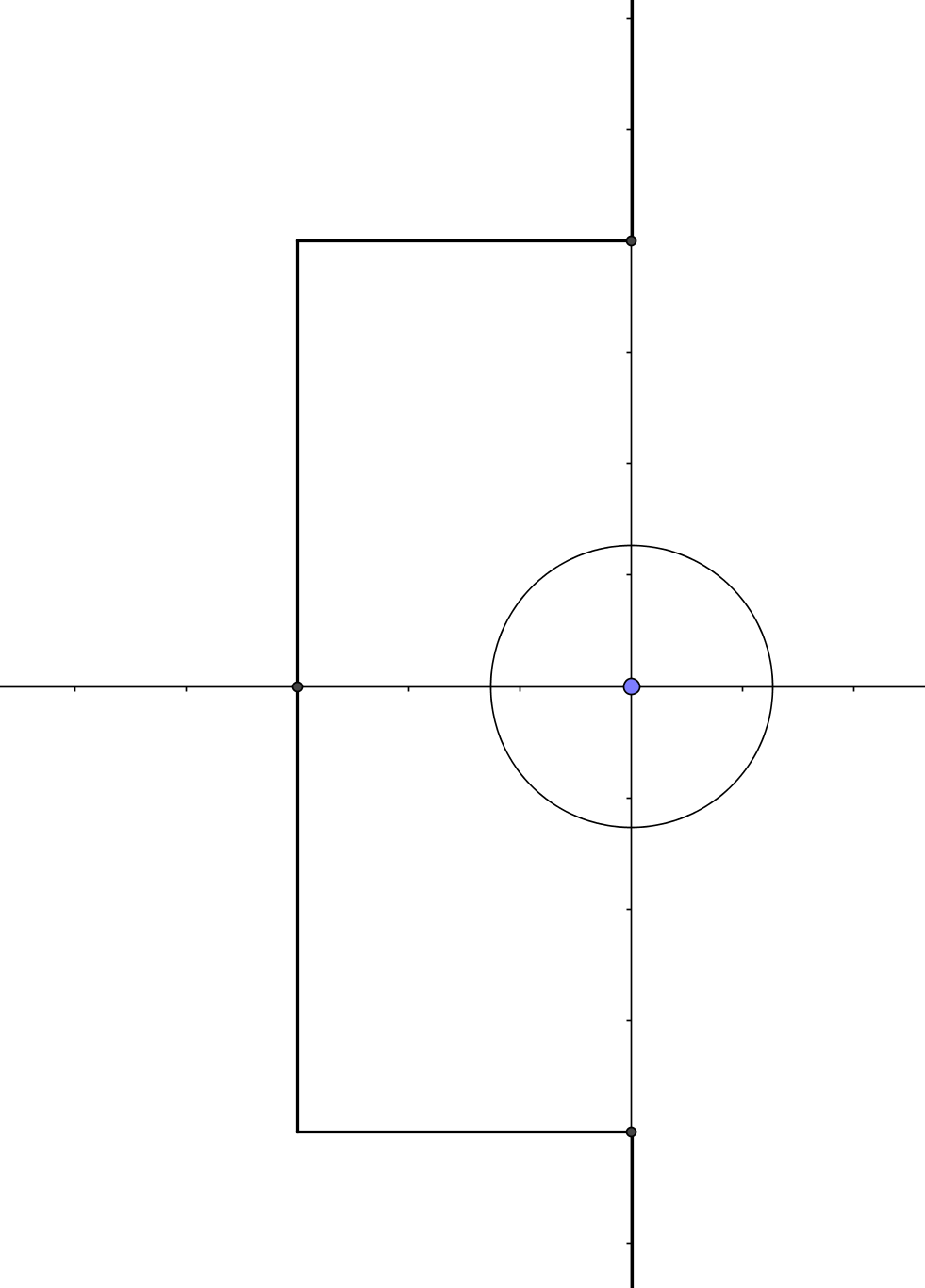}
   \put(-61,92){\mbox{$0$}}
   \put(-47,188){\mbox{$iY$}}
    \put(-47,30){\mbox{$-iY$}}
   \put(-149,118){\mbox{$-\frac{c}{\log Y}$}}
 \caption{Curve $\gamma$ in the proof of Lemma~\ref{integralwithlogderivative}.}
 \label{fig:curve1}
\end{figure}
Another requirement on $c$ is that the estimate (see \cite[Theorem 3.11]{titchmarsh}) $1/\zeta(\sigma+it) \ll \log(2+|t|)$ holds in this region and $\zeta'/\zeta(\sigma+it) \ll \log(4+|t|)$ (see \cite[Theorem 6.7]{montvau}). Then, one has
\[
\int_{\gamma_1} \ll {\int_Y^\infty \frac{\log(t)^{1+\ell-r}}{t^{j+1}}\, dt} \ll
{\frac{\log(Y)^{1+\ell-r}}{Y^{j}}} ,
\]
since $j \ge 3$. Moreover, since $n\leq y_{\star}$
\[
\int_{\gamma_2} \ll {\int_{-c/\log Y}^0 \log(Y)^{1+\ell-r} \left(\frac{y_{\star}}{n}\right)^x \frac{1}{Y^{j+1}} \, dx} 
 \ll {\frac{\log(Y)^{\ell-r}}{Y^{j+1}}} ,
\]
and finally
\[
\int_{\gamma_3} \ll {\int_{-Y}^{Y}  \log(4+|t|)^{\ell-r+1} \frac{(y_{\star}/n)^{-c/\log Y}}{c^2/\log^2 Y +t^2} \, dt}
{\ll \log(Y)^{\ell-r+j}(y_{\star}/n)^{-c/\log Y}} .
\]
Appropriately choosing $Y \asymp (\log y_{\star})$ gives an error of size $O({(\log \log y_{\star})^{\ell-r+j}})= O(\log y_{\star})$. The next step is to sum the residues. This sum can now be expressed as
\[\frac{1}{{2\pi i}}\oint {\frac{1}{{\zeta (1 + \beta  + u)}}{{\left( {\frac{{\zeta '}}{\zeta }(1 + \beta  + u)} \right)}^{\ell  - r}}{{\left( {\frac{{{y_{\star}}}}{n}} \right)}^u}\frac{{du}}{{{u^{j + 1}}}}}, \]
where the contour is now a small circle $\Omega$ of radius $\asymp 1/L$ around the origin such that $-\beta \in \Omega$. Since the radius of the circle is tending to zero, we can use the Laurent expansions
\[\frac{1}{{\zeta (s)}} = s - 1 + O({(s - 1)^2}) \quad \textnormal{and} \quad \frac{{\zeta '}}{\zeta }(s) =  - \frac{1}{{s - 1}} + \gamma  + O(|s - 1|),\]
to finally obtain
\begin{align*}
&\frac{1}{{2\pi i}}\oint {\frac{1}{{\zeta (1 + \beta  + u)}}{{\left( {\frac{{\zeta '}}{\zeta }(1 + \beta  + u)} \right)}^{\ell  - r}}{{\left( {\frac{{{y_{\star}}}}{n}} \right)}^u}\frac{{du}}{{{u^{j + 1}}}}} \nonumber \\
&=
\frac{1}{2\pi i} \oint  (\beta  + u + O(u^2)) \left(\frac{-1}{\beta  + u}+O(1)\right)^{\ell - r} \left( {\frac{{{y_{\star}}}}{n}} \right)^u \frac{du}{u^{j + 1}}.
\end{align*} 
Using the binomial theorem and a direct estimate gives, we get that the above is equal to
\begin{align*}
(-1)^{\ell-r} \oint  (\beta  + u)^{1-\ell  + r} \left( {\frac{{{y_{\star}}}}{n}} \right)^u \frac{du}{u^{j + 1}} + O(L^{j+\ell-r-2}),
\end{align*} 
which is the desired main term of the lemma.
\end{proof}
\noindent This integral can be computed exactly. To do this, note that for any integer $k \ge 1$, one has
\[
q^u (\beta + u)^k = \frac{d^k}{dy^k} e^{\beta y}(e^y q)^u \bigg|_{y=0}.
\]
Hence, one arrives at and where we temporarily set $q = y_{\star}/n$
\begin{align} \label{finalresultlemmaintegrallogderivative}
  \Upsilon &= {( - 1)^{\ell  - r}}\frac{1}{{2\pi i}}\oint {\frac{{{d^{1 - \ell  + r}}}}{{d{y^{1 - \ell  + r}}}}{e^{\beta y}}{{({e^y}q)}^u}{\bigg|_{y = 0}}\frac{{du}}{{{u^{j + 1}}}}}  = {( - 1)^{\ell  - r}}\frac{{{d^{1 - \ell  + r}}}}{{d{y^{1 - \ell  + r}}}}{e^{\beta y}}\frac{1}{{2\pi i}}\oint {{{({e^y}q)}^u}{|_{y = 0}}\frac{{du}}{{{u^{j + 1}}}}}  \nonumber \\
   &= \frac{{{{( - 1)}^{\ell  - r}}}}{{j!}}\frac{{{d^{1 - \ell  + r}}}}{{d{y^{1 - \ell  + r}}}}{e^{\beta y}}{\left( {y + \log \frac{{y_{\star}}}{n}} \right)^j}{\bigg|_{y = 0}},  
\end{align} 
by Cauchy's integral theorem.
\subsection{Combinatorial results}
When computing the crossterm of $\psi_1$ and $\psi_2$ the following result will be needed. This generalizes \cite[Lemma 8]{feng} which is the particular case $h_1=h_2=h$.
\begin{lemma}
\label{lemmacombinatorics}
 For $h_1$ and $h_2$ square-free, we have
 \begin{align}
  \mathcal{Q}(\ell_1,\ell_2) 
  &:= \sum\limits_{\substack{{p_1}{p_2} \cdots p_{\ell _1}|h_1}} \log {p_1}\log {p_2} \cdots \log p_{\ell _1} 
  \sum\limits_{\substack{{q_1}{q_2} \cdots {q_{\ell_2}}|{h_2}}} \log {q_1}\log {q_2} \cdots \log q_{\ell_2} \nonumber \\
   &=
   \sum_{k=0}^{\min\{\ell_1,\ell_2\}}k!\,\binom{\ell_1}{k} \binom{\ell_2}{k}
   \sum_{\substack{p_1p_2\cdots p_{k} q_1q_2\cdots q_{\ell_1-k} r_1r_2\cdots r_{\ell_2-k}| h_1h_2 \\ p_1p_2\cdots p_{k}|\gcd(h_1,h_2)\\q_{1}\cdots q_{\ell_1-k}|h_1\\r_{1}\cdots r_{\ell_2-k}|h_2}} \nonumber \\
   &\quad \times \bigg(\prod_{f=1}^k\log^2 {p_f}\bigg) \bigg(\prod_{f=1}^{\ell_1-k}\log {q_f}\bigg) \bigg(\prod_{f=1}^{\ell_2-k}\log {r_f}\bigg) , \nonumber
 \end{align}
Here the $p$'s, the $q$'s and the $r$'s are all distinct primes. 
\end{lemma}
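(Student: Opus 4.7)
The plan is to prove the identity by a combinatorial rearrangement of the double sum on the left-hand side, organized according to the overlap between the $p$-primes and the $q$-primes. Since $h_1$ and $h_2$ are square-free, the condition $p_1p_2\cdots p_{\ell_1}\mid h_1$ means precisely that $(p_1,\ldots,p_{\ell_1})$ is an ordered tuple of distinct prime divisors of $h_1$, and similarly for the $q$'s. For each such pair of tuples I would define the overlap size $k=|\{p_1,\ldots,p_{\ell_1}\}\cap\{q_1,\ldots,q_{\ell_2}\}|$, which necessarily satisfies $0\le k\le\min(\ell_1,\ell_2)$, and split $\mathcal{Q}(\ell_1,\ell_2)$ into $\min(\ell_1,\ell_2)+1$ pieces indexed by $k$.

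For fixed $k$ I will construct a bijection between configurations with overlap exactly $k$ and the labelled data appearing on the right, weighted by $k!\binom{\ell_1}{k}\binom{\ell_2}{k}$. The three factors arise as follows: $\binom{\ell_1}{k}$ chooses the $k$ positions in $(p_1,\ldots,p_{\ell_1})$ that will host a shared prime; $\binom{\ell_2}{k}$ chooses the corresponding positions in $(q_1,\ldots,q_{\ell_2})$; and $k!$ fixes the bijection matching the two sets of shared positions to each other. After these choices the shared primes contribute $\log^2$ (one log coming from each original product), while the non-shared primes contribute a single log each. Relabelling the shared primes as $p_1,\ldots,p_k$, the unshared $p$-primes as $q_1,\ldots,q_{\ell_1-k}$, and the unshared $q$-primes as $r_1,\ldots,r_{\ell_2-k}$, the divisibility constraints translate exactly as displayed: shared primes divide both $h_1$ and $h_2$ and hence $\gcd(h_1,h_2)$; the leftover $p$-primes divide $h_1$; the leftover $q$-primes divide $h_2$; and the overall divisibility into $h_1h_2$ then follows (and is recorded for bookkeeping). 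Summing over $k$ yields the right-hand side.

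As a sanity check I would verify the case $\ell_1=\ell_2=1$, where the identity collapses to the obvious split $\bigl(\sum_{p\mid h_1}\log p\bigr)\bigl(\sum_{q\mid h_2}\log q\bigr)=\sum_{p\mid\gcd(h_1,h_2)}\log^2 p+\sum_{p\ne q,\,p\mid h_1,\,q\mid h_2}\log p\log q$, as well as the diagonal case $h_1=h_2$, in which the formula should reduce to \cite[Lemma 8]{feng}. The main delicacy in the argument is the bookkeeping of the distinctness conditions: one must verify that the three multiplicative factors $\binom{\ell_1}{k}$, $\binom{\ell_2}{k}$, $k!$ produce each right-hand labelled configuration exactly once, and that the disjointness of the three sets $\{p_f\}$, $\{q_f\}$, $\{r_f\}$ on the right is the precise image of the within-tuple distinctness on the left combined with the definition of the overlap. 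No deep machinery is required, only a systematic accounting of the orbits, but double- or under-counting is easy to slip in if the matching step is not handled with care.
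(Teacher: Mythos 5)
Your proposal is correct and takes essentially the same route as the paper: both stratify the double sum by the overlap size $k$ between the two prime tuples and then account for orderings, the paper by first passing to strictly increasing tuples (extracting $\ell_1!\ell_2!$) and then dividing by $k!(\ell_1-k)!(\ell_2-k)!$, you by directly choosing the shared positions and their matching, which yields the same coefficient $k!\binom{\ell_1}{k}\binom{\ell_2}{k}$. The multiplicity bookkeeping you flag does close as described, since for fixed underlying sets of shared and unshared primes there are $\ell_1!\,\ell_2!$ left-hand configurations and $k!(\ell_1-k)!(\ell_2-k)!$ right-hand labelled tuples.
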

\begin{proof}
We may write
  \begin{align*}
  \mathcal{Q}(\ell_1,\ell_2) &= \sum\limits_{\substack{{p_1}{p_2} \cdots p_{\ell _1}|h_1\\p_a \neq p_b }} \log {p_1}\log {p_2} \cdots \log p_{\ell _1} 
  \sum\limits_{\substack{{q_1}{q_2} \cdots {q_{\ell_2}}|{h_2}\\ q_a \neq q_b}} \log {q_1}\log {q_2} \cdots \log q_{\ell_2} \\
  &= \ell_1! \ell_2!
  \sum\limits_{\substack{{p_1}{p_2} \cdots p_{\ell _1}|h_1\\p_1 < p_2<\cdots < p_{\ell_1} }} \log {p_1}\log {p_2} \cdots \log p_{\ell _1} 
  \sum\limits_{\substack{{q_1}{q_2} \cdots {q_{\ell_2}}|{h_2}\\ q_1 < q_2<\cdots < q_{\ell_2}}} \log {q_1}\log {q_2} \cdots \log q_{\ell_2} \\
   &= \ell_1! \ell_2!
     \sum_{k=0}^{\min\{\ell_1,\ell_2\}} 
     \sum_{\substack{p_1p_2\cdots p_{k} q_1q_2\cdots q_{\ell_1-k} r_1r_2\cdots r_{\ell_2-k}| h_1h_2 
          \\ p_1p_2\cdots p_{k}|\gcd(h_1,h_2),\,q_{1}\cdots q_{\ell_1-k}|h_1,\,r_{1}\cdots r_{\ell_2-k}|h_2\\
          p_1 < p_2<\cdots < p_{k},\, q_1 < q_2<\cdots < q_{\ell_1-k}, \, r_1 < r_2<\cdots < r_{\ell_2-k}}}\nonumber \\
     &\quad \times  
     \bigl(\log^2 {p_1} \cdots \log^2 p_k\bigr) \bigl(\log q_{1}\cdots \log q_{\ell_1-k}\bigr) \bigl(\log r_{1} \cdots \log r_{\ell_2-k}\bigr)\\
      &= 
     \sum_{k=0}^{\min\{\ell_1,\ell_2\}} \frac{\ell_1! \ell_2!}{k! (\ell_1-k)!(\ell_2-k)!}  \sum_{\substack{p_1p_2\cdots p_{k} q_1q_2\cdots q_{\ell_1-k} r_1r_2\cdots r_{\ell_2-k}| h_1h_2 
          \\ p_1p_2\cdots p_{k}|\gcd(h_1,h_2),\,q_{1}\cdots q_{\ell_1-k}|h_1,\,r_{1}\cdots r_{\ell_2-k}|h_2}}
        \nonumber \\
       & \quad \times  \bigl(\log^2 {p_1} \cdots \log^2 p_k\bigr) \bigl(\log q_{1}\cdots \log q_{\ell_1-k}\bigr) \bigl(\log r_{1} \cdots \log r_{\ell_2-k}\bigr).   
 \end{align*}
Using the definition of the binomial coefficient completes the proof.
\end{proof}
\subsection{Generalized von Mangoldt functions and Euler-MacLaurin summations}
\noindent Recall that for a positive integer $k$, the generalized von Mangoldt function $\Lambda_k(n)$ is defined \cite{ivicmangoldt} by the Dirichlet convolution
\[
\Lambda_k(n) = (\mu * \log^k)(n),
\] 
so that $\Lambda_1(n) = \Lambda(n)$. The generating series is
\[
\sum_{n=1}^{\infty} \frac{\Lambda_k(n)}{n^s} = (-1)^k \frac{\zeta^{(k)}}\zeta (s),
\]
for $\real(s)>1$ and here $\zeta^{(k)}$ stands for the $k$-th derivative of $\zeta$ with respect to $s$. By looking at
\[\frac{d}{{ds}}\bigg( {\frac{{{\zeta ^{(k)}}}}{\zeta }(s)} \bigg) = \frac{{{\zeta ^{(k + 1)}}}}{\zeta }(s) - \frac{{\zeta '}}{\zeta }(s)\frac{{{\zeta ^{(k)}}}}{\zeta }(s)\]
for $\real(s)>1$, we see that
\[{\Lambda _{k + 1}}(n) = {\Lambda _k}(n)\log (n) + (\Lambda  * {\Lambda _{k}})(n),\]
and in particular for $k=1$
\begin{align} \label{mangoldt2case}
	{\Lambda _2}(n) = \Lambda (n)\log (n) + (\Lambda  * \Lambda )(n).
\end{align}
\begin{lemma} \label{lemma1PK}
We have for smooth functions $F$ and $G$ in the interval $[0,1]$, $3 \le z \le x$, and $|s| \le (\log x)^{-1}$
	\begin{align*}
	\sum_{n \le z} \frac{\Lambda(n) \log n}{n^{1+s}} F \left( \frac{\log(x/n)}{\log x}\right) H \left( \frac{\log(z/n)}{\log z}\right)  =&  \frac{\log^2 z}{z^s} \int_{0}^{1} (1-u) F \left( 1 - (1-u)\frac{\log z}{\log x}\right) H \left( u\right) z^{us} du \\
	& + O(\log z).
	\end{align*} 
\end{lemma}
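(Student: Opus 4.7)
My plan is to prove this by a routine Abel (partial) summation followed by an input from the Prime Number Theorem. Setting
\[
g(t) := \frac{1}{t^{1+s}}\,F\!\left(\frac{\log(x/t)}{\log x}\right) H\!\left(\frac{\log(z/t)}{\log z}\right), \qquad f(t) := g(t)\log t,
\]
the sum in question equals $S=\sum_{n\le z}\Lambda(n) f(n)$. Letting $\Psi(t)=\sum_{n\le t}\Lambda(n)$ denote the Chebyshev function, Abel summation yields $S = f(z)\Psi(z) - \int_1^z f'(t)\Psi(t)\,dt$. I would then substitute the PNT decomposition $\Psi(t) = t + R(t)$, where $R(t)\ll t\exp(-c\sqrt{\log t})$, to split $S$ into a main term plus an error term.

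For the main term, the $t$-piece gives $z f(z) - \int_1^z t f'(t)\,dt$, which after one more integration by parts and the observation $f(1) = g(1)\log 1 = 0$ collapses to $\int_1^z g(t)\log t\,dt$. To put this in the claimed form I would change variables $t=z^{1-u}$, under which $\log t=(1-u)\log z$, $\log(z/t)/\log z = u$, $\log(x/t)/\log x = 1-(1-u)\log z/\log x$, and $dt/t^{1+s} = -(\log z)\,z^{-(1-u)s}\,du$. These conversions fit together to give exactly
\[
\frac{\log^{2}z}{z^{s}} \int_{0}^{1}(1-u)\, F\!\left(1-(1-u)\tfrac{\log z}{\log x}\right) H(u)\, z^{us}\,du,
\]
the main term stated in the lemma.

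For the error I would argue as follows. The boundary contribution obeys $f(z) R(z) \ll (\log z)\exp(-c\sqrt{\log z}) = o(1)$. Since $|s|\le(\log x)^{-1}$ implies $t^{s}=O(1)$ on $[1,z]$, and since $F,H$ and their derivatives are bounded on $[0,1]$, a direct computation gives $f'(t) \ll (\log t)/t^{2}$ for $t\ge 2$. Substituting $u=\log t$ then yields
\[
\int_{1}^{z} f'(t) R(t)\,dt \ll \int_{1}^{z}\frac{\log t}{t}\exp(-c\sqrt{\log t})\,dt = \int_{0}^{\log z} u\, e^{-c\sqrt{u}}\,du = O(1),
\]
comfortably inside the $O(\log z)$ target. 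There is no real obstacle here; indeed, should one prefer to avoid the classical zero-free-region input, the elementary bound $\Psi(t)=t+O(t/\log^{A}t)$ for any fixed $A>2$ would still deliver an $O(1)$ error.
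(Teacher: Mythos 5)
Your proposal is correct and follows essentially the same route as the paper: Abel summation against $\Psi(t)=\sum_{n\le t}\Lambda(n)$, the prime number theorem with classical error term, and the change of variables $t=z^{1-u}$. The only difference is cosmetic: you extract the exact main term by a second integration by parts (using $f(1)=0$) before inserting the PNT remainder, which even gives an $O(1)$ error where the paper is content with $O(\log z)$.
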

\begin{proof}
Start by setting 
\[
	\Psi(z,x) := \sum_{n \le z} \frac{\Lambda(n) \log n}{n^{1+s}} F \left( \frac{\log(x/n)}{\log x}\right) H \left( \frac{\log(z/n)}{\log z}\right) \quad \textnormal{and} \quad \psi(x) := \sum_{n \le x} \Lambda(n).
\]
By applying the Abel summation formula, one gets 
	\begin{align*}
	\Psi(z,x) & = \psi(z) \frac{\log z}{z^{1 + s}} F \left( \frac{\log(x/z)}{\log x}\right) H \left( 0\right) - \int_1^{z} \psi(u) \frac{d}{du} \left( \frac{\log u}{u^{1+s}} F \left( \frac{\log(x/u)}{\log x}\right) H \left( \frac{\log(z/u)}{\log z}\right) \right) du \\
	& = - \int_1^{z} \psi(u) \frac{d}{du} \left( \frac{\log u}{u^{1+s}} F \left( \frac{\log(x/u)}{\log x}\right) H \left( \frac{\log(z/u)}{\log z}\right) \right) du + O \left( \log z\right) \\
	& = - \int_1^{z} \psi(u) \frac{1 - (1+s)\log u}{u^{2+s}} F \left( \frac{\log(x/u)}{\log x}\right) H \left( \frac{\log(z/u)}{\log z}\right)  du \\
	&\quad - \int_1^{z} \psi(u) \frac{\log u}{u^{1+s}} \left( \frac{d}{du} F\left( \frac{\log(x/u)}{\log x}\right) \right) H \left( \frac{\log(z/u)}{\log z}\right)  du \\
	&\quad - \int_1^{z} \psi(u)  \frac{\log u}{u^{1+s}} F \left( \frac{\log(x/u)}{\log x}\right) \left( \frac{d}{du} H \left( \frac{\log(z/u)}{\log z}\right)  \right) du + O \left( \log z\right) \\
	& = \frac{\log^2 z (1 + s)}{z^s}\int_0^{1} \psi(z^{1-b}) (1-b) F \left( 1 - (1-b) \frac{\log z}{\log x}\right) H \left( b\right) z^{bs + b - 1} db \\ 
	&\quad + O \left( \log z \int_1^{z} \psi(u) \frac{1}{u^{2+|s|}}  du\right) \\
	&\quad + \frac{1}{\log x} \int_1^{z} \psi(u) \frac{\log u}{u^{2+s}} F' \left( \frac{\log(x/u)}{\log x}\right)  H \left( \frac{\log(z/u)}{\log z}\right)  du \\
	&\quad + \frac{1}{\log z}\int_1^{z} \psi(u)  \frac{\log u}{u^{2+s}} F \left( \frac{\log(x/u)}{\log x}\right) H' \left( \frac{\log(z/u)}{\log z}\right) du + O \left( \log z\right) \\
	& = \frac{\log^2 z (1 +s)}{z^s}\int_0^{1} \psi(z^{1-b}) (1-b) F \left( 1 - (1-b) \frac{\log z}{\log x}\right) H\left( b\right) z^{bs + b -1} db + O \left( \log z \right) \\
	& = \frac{\log^2 z}{z^s}\int_0^{1} \psi(z^{1-b}) (1-b) F \left( 1 - (1-b) \frac{\log z}{\log x}\right) H\left( b\right) z^{bs + b -1} db \\
	&\quad + O \left( \log z\int_0^{1} \psi(z^{1-b}) (1-b) z^{bs + b -1} db \right) + O \left( \log z \right) \\
	& = \frac{\log^2 z}{z^s}\int_0^{1} (1-b) F \left( 1 - (1-b) \frac{\log z}{\log x}\right) H\left( b\right) z^{bs} db +  O \left( \log z\int_0^{1} (1-b) z^{bs} db \right) + O \left( \log z \right) \\
	& = \frac{\log^2 z}{z^s}\int_0^{1} (1-b) F \left( 1 - (1-b) \frac{\log z}{\log x}\right) H\left( b\right) z^{bs} db + O \left( \log z \right),
	\end{align*}
since $\psi(x) = x + O(x \exp(- c \sqrt{\log x}))$ for $c > 1$ by the prime number theorem with remainder, see e.g. \cite{titchmarsh}.
\end{proof}
\begin{lemma} \label{lemma36rrz01}
We have for smooth functions $F$ and $G$ in the interval $[0,1]$, $3 \le z \le x$, and $|s| \le (\log x)^{-1}$
\begin{align}
  \sum\limits_{n \leqslant z} {\frac{{({d_k} * {\Lambda ^{ * l}})}}{{{n^{1 + s}}}}} &F\left( {\frac{{\log x/n}}{{\log x}}} \right)H\left( {\frac{{\log z/n}}{{\log z}}} \right) \nonumber \\
  &= \frac{{{{(\log z)}^{k + l}}}}{{(k + l - 1)!{z^s}}}\int_0^1 {{{(1 - u)}^{k + l - 1}}F\left( {1 - (1 - u)\frac{{\log z}}{{\log x}}} \right)H(u){z^{us}}du}  + O({(\log 3z)^{k + l - 1}}), \nonumber 
\end{align}
where $d_k(n)$ denotes the number of ways an integer $n$ can be written as a product of $k \ge 2$ fixed factors. Note that $d_1(n)=1$ and $d_2(n)=d(n)$, the number of divisors of $n$.
\end{lemma}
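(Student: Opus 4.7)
The strategy mirrors the proof of Lemma \ref{lemma1PK}: apply Abel summation, with the key input being an asymptotic for the partial sums of $(d_k*\Lambda^{*l})(n)$. Specifically I would first establish
\[
S(t) := \sum_{n \le t}(d_k*\Lambda^{*l})(n) = \frac{t(\log t)^{k+l-1}}{(k+l-1)!} + O\!\left(t(\log t)^{k+l-2}\right),
\]
valid for $t \ge 3$. The generating Dirichlet series
\[
\sum_{n=1}^{\infty}\frac{(d_k*\Lambda^{*l})(n)}{n^{s}} = \zeta(s)^{k}\left(-\frac{\zeta'}{\zeta}(s)\right)^{l}
\]
has a pole of order exactly $k+l$ at $s=1$ with principal singular part $(s-1)^{-(k+l)}$; a Perron formula followed by a contour shift using the classical zero-free region of $\zeta$, in the spirit of Lemma \ref{integralwithlogderivative}, then yields the claimed asymptotic (one may equivalently invoke the Selberg--Delange method).

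Next let $\Psi(z,x)$ denote the sum on the left-hand side of the lemma and set $f(t) = F\bigl(\tfrac{\log(x/t)}{\log x}\bigr)H\bigl(\tfrac{\log(z/t)}{\log z}\bigr)/t^{1+s}$. Since $(d_k*\Lambda^{*l})(1)=0$, partial summation gives
\[
\Psi(z,x) = S(z)f(z) - \int_{1}^{z} S(t)f'(t)\,dt.
\]
The boundary term $S(z)f(z) \ll (\log z)^{k+l-1}$ is already absorbed into the stated error. In $f'(t)$, differentiating $t^{-(1+s)}$ supplies the main term, while the chain rule applied to $F$ or to $H$ produces an extra factor $1/(t\log x)$ or $1/(t\log z)$; combined with $S(t)/t \ll (\log t)^{k+l-1}$ and integrated against $dt/t$, those pieces yield $O((\log z)^{k+l}/\log x)$ and $O((\log z)^{k+l-1})$ respectively, both $O((\log 3z)^{k+l-1})$ since $z\le x$. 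Replacing $S(t)$ by its asymptotic in the surviving integral costs a further $O((\log z)^{k+l-1})$, and the prefactor $(1+s) = 1 + O(1/\log x)$ is absorbed into the same error.

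The final step is the change of variables $t = z^{1-u}$, which yields $\log t = (1-u)\log z$, $\log(z/t) = u\log z$, $\log(x/t) = \log x\bigl(1-(1-u)\tfrac{\log z}{\log x}\bigr)$, and $dt/t = -(\log z)\,du$, with $u$ running from $1$ at $t=1$ to $0$ at $t=z$. A direct substitution transforms
\[
\frac{1}{(k+l-1)!}\int_{1}^{z}\frac{(\log t)^{k+l-1}}{t^{1+s}}F\!\left(\tfrac{\log(x/t)}{\log x}\right)H\!\left(\tfrac{\log(z/t)}{\log z}\right)dt
\]
into
\[
\frac{(\log z)^{k+l}}{(k+l-1)!\,z^{s}}\int_{0}^{1}(1-u)^{k+l-1}F\!\left(1-(1-u)\tfrac{\log z}{\log x}\right)H(u)\,z^{us}\,du,
\]
which is exactly the claimed main term.

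The main obstacle is obtaining the partial-sum asymptotic for $S(t)$ with a sharp enough error term: extracting the leading $(k+l)$-th order pole from $\zeta(s)^{k}(-\zeta'/\zeta(s))^{l}$ and bounding the contour contributions cleanly requires the classical zero-free region and mimics the bookkeeping of Lemma \ref{integralwithlogderivative}. Once that input is secured, the remainder of the argument is a routine Abel summation followed by the change of variables above, and the error terms slot together as in Lemma \ref{lemma1PK}.
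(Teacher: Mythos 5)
Your proposal is correct, but it takes a different route from the paper. The paper disposes of this lemma by citation: it proves the result by induction over $\ell$, starting from the $\ell=0$ case (a pure $d_k$-sum handled by Lemma 4.4 of \cite{bcy}) and convolving in one factor of $\Lambda$ at a time via Euler--Maclaurin/Abel summation and the prime number theorem, exactly as in Lemma 3.6 of \cite{rrz01} and in the same spirit as Lemmas \ref{lemma1PK}--\ref{lemmaEM2lambdas} here. You instead separate the arithmetic from the smoothing: you first extract the summatory function $S(t)=\sum_{n\le t}(d_k*\Lambda^{*l})(n)=\frac{t(\log t)^{k+l-1}}{(k+l-1)!}+O\bigl(t(\log t)^{k+l-2}\bigr)$ from the order-$(k+l)$ pole of $\zeta^k(-\zeta'/\zeta)^l$ at $s=1$, and then perform a single Abel summation to insert the weights $F$, $H$, $n^{-s}$, finishing with the substitution $t=z^{1-u}$; your bookkeeping of the boundary term, the $F'$ and $H'$ contributions, and the $(1+s)$ factor is accurate, and the change of variables reproduces the stated main term exactly. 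The one input you should nail down is the asymptotic for $S(t)$: your Perron-plus-zero-free-region sketch works, but with the crude pointwise bound $a_n\ll n^{\varepsilon}$ the truncated-Perron error is not immediately acceptable at $T=\exp(\sqrt{\log t})$, so one should either use divisor-type mean bounds near $n\asymp t$ (or a smoothed Perron/Selberg--Delange argument), or simply note that the weak error $O(t(\log t)^{k+l-2})$ also follows elementarily by the hyperbola method and induction, which is precisely what the paper's cited route buys: a self-contained, elementary argument uniform with its other Euler--Maclaurin lemmas, at the cost of redoing the weighted sum at each inductive step, whereas your argument is shorter and more conceptual once the standard summatory asymptotic is granted.
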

\begin{proof}
This can be proved by using induction over $\ell$ and Euler-Maclaurin summation. One starts with $\ell=0$ and then uses \cite[Lemma 4.4]{bcy}. The exact details can be found in \cite[Lemma 3.6]{rrz01}. 
\end{proof}
\begin{lemma} \label{lemma2PK}
We have for smooth functions $F$ and $G$ in the interval $[0,1]$, $3 \le z \le x$, and $|s| \le (\log x)^{-1}$
	\begin{align*}
	\sum_{n \le z} \frac{(1 * \Lambda^{*a} * \Lambda \log)(n)}{n^{1+s}} &F \left( \frac{\log(x/n)}{\log x}\right) H \left( \frac{\log(z/n)}{\log z}\right)  \\
	= &  \frac{\log^{3 + a} z}{(a+2)! z^s} \int_{0}^{1} (1-u)^{a+2} F \left( 1 - (1-u)\frac{\log z}{\log x}\right) H \left( u\right) z^{us} du + O(\log^{a+2} z).
	\end{align*} 
\end{lemma}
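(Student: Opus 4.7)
The proof hinges on the arithmetic identity \eqref{mangoldt2case}, which I rewrite as $\Lambda\log = \Lambda_2 - \Lambda * \Lambda$. Convolving with $1 * \Lambda^{*a}$ yields the splitting
\[
1 * \Lambda^{*a} * (\Lambda\log) \;=\; \bigl(1 * \Lambda^{*a} * \Lambda_2\bigr) \;-\; \bigl(1 * \Lambda^{*(a+2)}\bigr),
\]
so the sum in the statement breaks into two pieces, and the plan is to show that the two contributions combine with coefficients $2$ and $-1$ to produce the claimed single main term.

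The second piece is handled immediately by Lemma~\ref{lemma36rrz01} with $k=1$ and $\ell = a+2$, giving main term
\[
\frac{(\log z)^{a+3}}{(a+2)!\,z^s}\int_0^1 (1-u)^{a+2} F\!\left(1-(1-u)\tfrac{\log z}{\log x}\right) H(u)\, z^{us}\, du + O((\log z)^{a+2}).
\]

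For the first piece, I use that $\Lambda_2 = \mu * \log^2$, hence $1 * \Lambda_2 = \log^2$ and $1 * \Lambda^{*a} * \Lambda_2 = \Lambda^{*a} * \log^2$. Splitting this as a double sum over $de=n$, the inner sum $\sum_{e\le z/d}(\log e)^2 e^{-1-s} F(\log(x/de)/\log x) H(\log(z/de)/\log z)$ contains no arithmetic function and is evaluated by Euler-Maclaurin summation together with the change of variable $e = (z/d)^{1-v}$, yielding a main term of the form $(\log(z/d))^3 (z/d)^{-s}\int_0^1(1-v)^2 (z/d)^{sv} F(\cdots) H(\cdots)\,dv + O((\log(z/d))^2)$. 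Plugging back into the outer sum in $d$ and proceeding as in the proof of Lemma~\ref{lemma36rrz01} to handle the $\Lambda^{*a}$ weight, I obtain a main term equal to
\[
\frac{2\,(\log z)^{a+3}}{(a+2)!\,z^s}\int_0^1 (1-u)^{a+2} F\!\left(1-(1-u)\tfrac{\log z}{\log x}\right) H(u)\, z^{us}\, du + O((\log z)^{a+2}).
\]
The factor $2$ reflects the fact that $\zeta''/\zeta$ has residue $2$ at its double pole at $s=1$, whereas $(\zeta'/\zeta)^2$ has residue $1$ there, so that overall the generating series $\zeta \cdot (-\zeta'/\zeta)^a \cdot (\zeta''/\zeta)$ carries twice the principal coefficient of $\zeta \cdot (-\zeta'/\zeta)^{a+2}$ at the pole of order $a+3$.

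Subtracting the two pieces yields the coefficient $2-1 = 1$ in front of the main term, reproducing exactly the statement with error $O((\log z)^{a+2})$. The principal obstacle is making the factor of $2$ in the first piece rigorous: one must track all constants through the Euler-Maclaurin analysis of the inner $\log^2$-sum and ensure that the arguments of $F$ and $H$ transform compatibly under the change of variables. A clean alternative is to proceed by induction on $a$, with the base case $a=0$ verified directly by Euler-Maclaurin applied to $\sum_{n\le z}(\log n)^2/n^{1+s} F H$ combined with Lemma~\ref{lemma36rrz01} for the $1 * \Lambda^{*2}$ sum, and the inductive step carried out by peeling off a single factor of $\Lambda$ via Lemma~\ref{lemma1PK}.
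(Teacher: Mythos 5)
Your decomposition is correct and the constants do work out, but the route is genuinely different from the paper's. The paper proves the lemma by running the same Euler--Maclaurin/convolution scheme as in Lemma~\ref{lemma36rrz01}, with the new factor $\Lambda(n)\log n$ handled head-on by Lemma~\ref{lemma1PK} (Abel summation plus the prime number theorem); no decomposition of $\Lambda\log$ is made. You instead reverse \eqref{mangoldt2case}, writing $\Lambda\log=\Lambda_2-\Lambda*\Lambda$, treat $1*\Lambda^{*(a+2)}$ by Lemma~\ref{lemma36rrz01}, and treat $1*\Lambda^{*a}*\Lambda_2=\Lambda^{*a}*\log^2$ via Euler--Maclaurin on the $\log^2$-sum followed by the Lemma~\ref{lemma36rrz01} machinery for the $\Lambda^{*a}$ weight; the coefficient $2$ (the leading Laurent coefficient of $\zeta''/\zeta$ at its double pole, strictly speaking, rather than a residue) then combines with the $-1$ from $1*\Lambda^{*(a+2)}$ to give the stated main term, and the bookkeeping checks out (e.g.\ for $F=H=1$, $s=0$, $a=0$ both your two pieces and the stated main term give $\tfrac13\log^3 z-\tfrac16\log^3 z=\tfrac16\log^3 z$). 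In effect you prove the $b=1$ instance of Lemma~\ref{lemmaEM2lambdas} first and deduce the present lemma from it, which is the opposite logical order to the paper (which derives Lemma~\ref{lemmaEM2lambdas} from this lemma); this is legitimate precisely because you establish the $\Lambda_2$ piece independently through $1*\Lambda_2=\log^2$ rather than citing Lemma~\ref{lemmaEM2lambdas}, which would be circular. What your route buys is that Lemma~\ref{lemma1PK} is never needed; what it costs is the extra constant-tracking through the $\log^2$ Euler--Maclaurin step, which you rightly flag as the remaining work, and which is comparable in effort to the paper's one-step reduction given that Lemma~\ref{lemma1PK} is already proved there. One small correction: in your alternative induction on $a$, the inductive step peels off a plain factor of $\Lambda$, and for that you need the PNT-type step used inside the proof of Lemma~\ref{lemma36rrz01} (the Lemma~4.4 ingredient of \cite{bcy}), not Lemma~\ref{lemma1PK}, which only evaluates sums weighted by $\Lambda(n)\log n$.
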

\begin{proof}
Same as in the beginning of the proof of Lemma \ref{lemma36rrz01} but instead we use Lemma \ref{lemma1PK}.
\end{proof}
\begin{lemma} \label{lemmaEM2lambdas}
We have for smooth functions $F$ and $G$ in the interval $[0,1]$, $3 \le z \le x$, and $|s| \le (\log x)^{-1}$
	\begin{align*}
	 \sum_{n \le z} & \frac{(1 * \Lambda^{*a} * \Lambda_2^{*b})(n)}{n^{1+s}} F \left( \frac{\log(x/n)}{\log x}\right) H \left( \frac{\log(z/n)}{\log z}\right)  \\
	= &  2^b \frac{\log^{1 + a+ 2b} z}{(a+ 2b)! z^s} \int_{0}^{1} (1-u)^{a+2b} F \left( 1 - (1-u)\frac{\log z}{\log x}\right) H \left( u\right) z^{us} du + O(\log^{a+ 2b} z).
	\end{align*} 
\end{lemma}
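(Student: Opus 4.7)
The plan is to argue by induction on $b$, mirroring the recursive strategy of Lemmas \ref{lemma36rrz01} and \ref{lemma2PK} and exploiting the decomposition $\Lambda_2=\Lambda\log+\Lambda*\Lambda$ from \eqref{mangoldt2case}. The base case $b=0$ is immediate: the sum is $\sum_{n\le z}(1*\Lambda^{*a})(n)\,n^{-1-s}F(\cdots)H(\cdots)$, and this is precisely Lemma \ref{lemma36rrz01} specialised to $k=1$ (so $d_1\equiv 1$) and $l=a$; since $2^{0}=1$, the constants match.

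For the inductive step, assume the claim for $b-1$ and every $a\ge 0$. Then \eqref{mangoldt2case} gives
\[
1*\Lambda^{*a}*\Lambda_2^{*b} \;=\; \bigl(1*\Lambda^{*(a+2)}*\Lambda_2^{*(b-1)}\bigr) \;+\; \bigl(1*\Lambda^{*a}*\Lambda_2^{*(b-1)}\bigr)*\Lambda\log.
\]
I would handle the first summand directly by the induction hypothesis with the parameter shift $a\mapsto a+2$, $b\mapsto b-1$, which contributes
\[
\frac{2^{b-1}\log^{1+a+2b}z}{(a+2b)!\,z^s}\int_0^1 (1-u)^{a+2b}F\!\left(1-(1-u)\tfrac{\log z}{\log x}\right)H(u)\,z^{us}\,du
\]
modulo the admissible error $O(\log^{a+2b}z)$.

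For the second summand, I would first estimate the partial sum $\Xi(u):=\sum_{n\le u}(1*\Lambda^{*a}*\Lambda_2^{*(b-1)})(n)$ by Perron's formula applied to the Dirichlet series $\zeta(s)(-\zeta'/\zeta)^a(s)(\zeta''/\zeta)^{b-1}(s)$; shifting the contour past its order-$(a+2b-1)$ pole at $s=1$ in the manner of the proof of Lemma \ref{integralwithlogderivative} and invoking the classical zero-free region yields $\Xi(u)=2^{b-1}u\log^{a+2b-2}u/(a+2b-2)!+O(u\exp(-c\sqrt{\log u}))$. Then, writing $n=mp$ and summing the outer $\Lambda(p)\log p$ factor by the Abel-summation procedure of Lemma \ref{lemma1PK}, with $\Xi$ substituted for the PNT partial sum $\psi$, produces the same main term as for the first summand, namely $\tfrac{2^{b-1}}{(a+2b)!}$ times the same integral, with the same error $O(\log^{a+2b}z)$. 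Adding the two contributions gives $2\cdot 2^{b-1}=2^b$, completing the induction.

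The main obstacle is the second step: one must confirm that Abel summation against the $\Lambda\log$ factor, with $\Xi$ in place of $\psi$, reproduces the correct coefficient $1/(a+2b)!$ together with the correct integrand $(1-u)^{a+2b}F(\cdots)H(u)z^{us}$, and does so with an error term of size only $O(\log^{a+2b}z)$. Both tasks are essentially bookkeeping adaptations of the contour shift already carried out in Lemma \ref{integralwithlogderivative} and of the kernel manipulations already carried out in Lemma \ref{lemma1PK}; the crucial combinatorial point is that the factor $2$ accumulated at each inductive level ultimately traces back to $\zeta''/\zeta\sim 2/(s-1)^2$ near $s=1$, which is the arithmetic source of the $2^b$ in the statement.
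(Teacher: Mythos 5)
Your skeleton coincides with the paper's: the proof given there is precisely an induction on $b$ using the decomposition \eqref{mangoldt2case}, with the $\Lambda*\Lambda$ part absorbed by the induction hypothesis ($a\mapsto a+2$, $b\mapsto b-1$), and your base case via Lemma \ref{lemma36rrz01} with $k=1$, $l=a$ is the intended one. Where you genuinely differ is the $\Lambda\log$ summand: the paper disposes of it by the elementary route of Lemma \ref{lemma2PK} (itself an Abel/Euler--MacLaurin recursion built on Lemma \ref{lemma1PK} and the prime number theorem), whereas you import Perron's formula and the zero-free region to obtain an asymptotic for the summatory function $\Xi$ and then Abel-sum. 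Both routes work; note that the paper's citation of Lemma \ref{lemma2PK} literally covers only the case of no extra $\Lambda_2^{*(b-1)}$ factor, so it tacitly needs the same kind of extension you are carrying out, and your analytic route even makes the induction dispensable: the Dirichlet series of $1*\Lambda^{*a}*\Lambda_2^{*b}$ is $\zeta(w)\bigl(-\tfrac{\zeta'}{\zeta}(w)\bigr)^{a}\bigl(\tfrac{\zeta''}{\zeta}(w)\bigr)^{b}\sim 2^{b}/(w-1)^{1+a+2b}$ near $w=1$, so a single contour shift plus Abel summation against the smooth kernel yields the stated main term (and exhibits the $2^{b}$) in one stroke, which is what your closing remark about $\zeta''/\zeta\sim 2/(w-1)^2$ amounts to.

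Two small repairs are needed in your second step, neither fatal. First, the residue at the order-$(a+2b-1)$ pole gives $u$ times a polynomial of degree $a+2b-2$ in $\log u$, so the correct statement is $\Xi(u)=2^{b-1}u\log^{a+2b-2}u/(a+2b-2)!+O\bigl(u\log^{a+2b-3}u\bigr)$ rather than with an exponentially small error; the lower-order polynomial terms are harmless, contributing only $O(\log^{a+2b}z)$ after the weighted summation. Second, ``summing the outer $\Lambda(p)\log p$ factor with $\Xi$ substituted for $\psi$'' conflates the two variables: after writing $n=mp$ you need either a double partial summation (inner sum over the prime variable via Lemma \ref{lemma1PK}, outer sum over $m$ via $\Xi$, or vice versa), or a single Abel summation against the summatory function of the full convolution $(1*\Lambda^{*a}*\Lambda_2^{*(b-1)})*\Lambda\log$, which Perron delivers just as easily since its Dirichlet series behaves like $2^{b-1}/(w-1)^{1+a+2b}$. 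With either repair the coefficient $2^{b-1}/(a+2b)!$ and the kernel $(1-u)^{a+2b}$ come out as you claim, and $2\cdot 2^{b-1}=2^{b}$ closes the induction.
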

\begin{proof}
This follows by induction on $b$ and by using Lemma \ref{lemma2PK} combined with \eqref{mangoldt2case}.
\end{proof}
\section{Evaluation of the shifted mean value integrals $I_{\star}(\alpha,\beta)$}
\subsection{Proof of Lemma \ref{lemmac11}}
Although this was already explained in \cite{youngsimple}, the mean value integral $I_{22}(\alpha,\beta)$ builds up from $I_{12}(\alpha,\beta)$ which in turn is a refinement of $I_{11}(\alpha,\beta)$. Therefore, careful analysis will repay itself by going over the main points of the evaluation of $I_{11}(\alpha,\beta)$ briefly. For our purposes, we shall illustrate this for $\theta_1 < 1/2$; however, in \cite{conrey89} it was shown that one could take $\theta_1<4/7$. We start by inserting the definition of the mollifier $\psi_1$ in $I_{11}$ so that
\begin{align}
  {I_{11}}(\alpha ,\beta ) &= \int_{ - \infty }^\infty  {w(t)\zeta (\tfrac{1}{2} + \alpha  + it)\zeta (\tfrac{1}{2} + \beta  - it){\psi _1}\overline {{\psi _1}} ({\sigma _0} + it)dt}  \nonumber \\
   &= \int_{ - \infty }^\infty  {w(t)\zeta (\tfrac{1}{2} + \alpha  + it)\zeta (\tfrac{1}{2} + \beta  - it)}  \nonumber \\
   &\quad \times \sum\limits_{h \leqslant {y_1}} {\frac{{\mu (h){h^{ - 1/2}}}}{{{h^{it}}}}} P_1\left( {\frac{{\log {y_1}/h}}{{\log {y_1}}}} \right)\sum\limits_{k \leqslant {y_1}} {\frac{{\mu (k){k^{ - 1/2}}}}{{{k^{ - it}}}}} P_1\left( {\frac{{\log {y_1}/k}}{{\log {y_1}}}} \right)dt \nonumber \\
   &= \sum\limits_{h \leqslant {y_1}} {\sum\limits_{k \leqslant {y_1}} {\frac{{\mu (h)\mu (k)}}{{{{(hk)}^{1/2}}}}P_1\left( {\frac{{\log {y_1}/h}}{{\log {y_1}}}} \right)P_1\left( {\frac{{\log {y_1}/k}}{{\log {y_1}}}} \right)} }  \nonumber \\
   &\quad \times \int_{ - \infty }^\infty  {w(t){{\left( {\frac{h}{k}} \right)}^{ - it}}\zeta (\tfrac{1}{2} + \alpha  + it)\zeta (\tfrac{1}{2} + \beta  - it)} dt. \nonumber  
\end{align}
According to Lemma \ref{lemmaAFE}, we write $I_{11}(\alpha,\beta)=I'_{11}(\alpha,\beta)+I''_{11}(\alpha,\beta)$, where $I'_{11}$ is given by 
\begin{align} \label{Iprime11}
  I'_{11}(\alpha ,\beta ) &= \sum\limits_{h \leqslant {y_1}} {\sum\limits_{k \leqslant {y_1}} {\frac{{\mu (h)\mu (k)}}{{{{(hk)}^{1/2}}}}P_1\left( {\frac{{\log {y_1}/h}}{{\log {y_1}}}} \right)P_1\left( {\frac{{\log {y_1}/k}}{{\log {y_1}}}} \right)} }  \nonumber \\
   &\quad \times \sum\limits_{hm = kn} \frac{1}{{{m^{1/2 + \alpha }}{n^{1/2 + \beta }}}}\int_{ - \infty }^\infty  {{V_{\alpha ,\beta }}(mn,t)w(t)dt} .   
\end{align}
Notice that $I''_{11}(\alpha,\beta)$ is obtained by replacing $\alpha$ with $-\beta$, $\beta$ with $-\alpha$ and multiplying inside the integrand by $X_{\alpha,\beta,t}=T^{-\alpha,\beta}(1+O(L^{-1}))$. In other words,
\[
I_{11}(\alpha,\beta) = I'_{11}(\alpha,\beta) + T^{-\alpha-\beta} I'_{11}(-\beta,-\alpha) +O(T/L).
\]
Let us then look at $I'_{11}$ more closely. Using the Mellin representations
\[
{P_1}[h] = \sum\limits_i {\frac{{{a_i}i!}}{{{{\log }^i}{y_1}}}} \frac{1}{{2\pi i}}\int_{(1)} {{{\left( {\frac{{{y_1}}}{h}} \right)}^s}\frac{{ds}}{{{s^{i + 1}}}}} \quad \textnormal{and} \quad {P_1}[k] = \sum\limits_j {\frac{{{a_j}j!}}{{{{\log }^j}{y_1}}}} \frac{1}{{2\pi i}}\int_{(1)} {{{\left( {\frac{{{y_1}}}{k}} \right)}^u}\frac{{du}}{{{u^{j + 1}}}}} ,
\]
we then get
\begin{align}
  I'_{11}(\alpha ,\beta ) &= \int_{ - \infty }^\infty  {w(t)\sum\limits_{i,j} {\frac{{{a_i}i!{a_j}j!}}{{{{\log }^{i + j}}{y_1}}}} } {\left( {\frac{1}{{2\pi i}}} \right)^3}\int_{(1)} {\int_{(1)} {\int_{(1)} {y_1^{s + u}{g_{\alpha ,\beta }}(z,t)\frac{{G(z)}}{z}} } }  \nonumber \\
   &\quad \times \sum\limits_{hm = kn} {\frac{{\mu (h)\mu (k)}}{{{h^{1/2+s}}{k^{1/2+u}}{m^{1/2 + \alpha  + z}}{n^{1/2 + \beta  + z}}}}} dz\frac{{ds}}{{{s^{i + 1}}}}\frac{{du}}{{{u^{j + 1}}}}dt. \nonumber
\end{align}
We now evaluate the arithmetical sum $S=\sum_{hm=kn}$ in the integrand. This is done $p$-adically as follows. We denote by $\nu_p(n)$ the number of times the prime number $p$ appears in $n$, and without risk of confusion we write $n'=\nu_p(n)$. This means that
\begin{align}
  S &= \sum\limits_{hm = kn} {\frac{{\mu (h)\mu (k)}}{{{h^{1/2+s}}{k^{1/2+u}}{m^{1/2 + \alpha  + z}}{n^{1/2 + \beta  + z}}}}}  \nonumber \\
   &= \prod\limits_p {\sum\limits_{h' + n' = m' + k'} {\frac{{\mu ({p^{h'}})\mu ({p^{k'}})}}{{{{({p^{h'}})}^{1/2+s}}{{({p^{k'}})}^{1/2+u}}{{({p^{m'}})}^{1/2 + \alpha  + z}}{{({p^{n'}})}^{1/2 + \beta  + z}}}}} }  \nonumber \\
   &= \prod\limits_p {\left( {1 + \frac{1}{{{p^{1 + s + u}}}} - \frac{1}{{{p^{1 + s + \alpha  + z}}}} - \frac{1}{{{p^{1 + u + \beta  + z}}}} + \frac{1}{{{p^{1 + \alpha  + \beta  + 2z}}}} + O({p^{ - 2 + \varepsilon }})} \right)}  \nonumber \\
   &= \frac{{\zeta (1 + s + u)\zeta (1 + \alpha  + \beta  + 2z)}}{{\zeta (1 + s + \alpha  + z)\zeta (1 + u + \beta  + z)}}{A_{\alpha ,\beta }}(s,u,z), \nonumber 
\end{align}
where the arithmetical factor $A_{\alpha,\beta}(s,u,z)$ is given by an absolutely convergent Euler product in some product of half-planes containing the origin. It will be important to remark that when $\alpha = \beta = 0$ and $s=u=z$ we have
\begin{align} \label{arithmeitcal1}
{A_{0,0}}(z,z,z) = \sum\limits_{hm = kn} {\frac{{\mu (h)\mu (k)}}{{{h^{1/2 + z}}{k^{1/2 + z}}{m^{1/2 + z}}{n^{1/2 + z}}}}}  = \sum\limits_{hm = kn} {\frac{{\mu (h)\mu (k)}}{{{{(hkmn)}^{1/2 + z}}}}}  = 1,
\end{align}
for all $z$, by the M\"{o}bius inversion formula. Inserting this into $I'_{11}$ we get
\begin{align}
  I'_{11}(\alpha ,\beta ) &= \int_{ - \infty }^\infty  {w(t)\sum\limits_{i,j} {\frac{{{a_i}i!{a_j}j!}}{{{{\log }^{i + j}}{y_1}}}} } {\left( {\frac{1}{{2\pi i}}} \right)^3}\int_{(1)} {\int_{(1)} {\int_{(1)} {y_1^{s + u}{g_{\alpha ,\beta }}(z,t)\frac{{G(z)}}{z}} } }  \nonumber \\
   &\quad \times \frac{{\zeta (1 + s + u)\zeta (1 + \alpha  + \beta  + 2z)}}{{\zeta (1 + s + \alpha  + z)\zeta (1 + u + \beta  + z)}}{A_{\alpha ,\beta }}(s,u,z)dz\frac{{ds}}{{{s^{i + 1}}}}\frac{{du}}{{{u^{j + 1}}}}dt . \nonumber
\end{align}
Now we deform the path of integration to $\real(z)=-\delta + \varepsilon$ where $\delta>0$ small and $\real(s)=\real(u)=\delta$. By doing this, we pick up a simple pole coming from $1/z$ at $z=0$ only, since $G(z)$ vanishes at the pole of $\zeta(1+\alpha+\beta+2z)$. 
The new path of integration with respect to $z$ contributes an error of the size
\[
\sum_{n \le y_1} \frac{1}{n} \bigg( 1 + \log \frac{y_1}{n} \bigg)^{-2} \ll 1 \ll L^{i+j-2}.
\]
Thus, we end up with
\begin{align}
  I'_{11}(\alpha ,\beta ) &= \int_{ - \infty }^\infty  {w(t)\sum\limits_{i,j} {\frac{{{a_i}i!{a_j}j!}}{{{{\log }^{i + j}}{y_1}}}} } {\left( {\frac{1}{{2\pi i}}} \right)^2}\int_{(\delta)} {\int_{(\delta)} {\mathop {\operatorname{res} }\limits_{z = 0} } } y_1^{s + u}{g_{\alpha ,\beta }}(z,t)\frac{{G(z)}}{z} \nonumber \\
   &\quad \times \frac{{\zeta (1 + s + u)\zeta (1 + \alpha  + \beta  + 2z)}}{{\zeta (1 + s + \alpha  + z)\zeta (1 + u + \beta  + z)}}{A_{\alpha ,\beta }}(s,u,z)\frac{{ds}}{{{s^{i + 1}}}}\frac{{du}}{{{u^{j + 1}}}}dt +O(L^{i+j-2}) \nonumber \\
   &= \widehat w(0)\zeta (1 + \alpha  + \beta )\sum\limits_{i,j} {\frac{{{a_i}i!{a_j}j!}}{{{{\log }^{i + j}}{y_1}}}} {J_{11}}+O(L^{i+j-2}), \nonumber  
\end{align}
where
\[{J_{11}} = {\left( {\frac{1}{{2\pi i}}} \right)^2}\int_{(1)} {\int_{(1)} {y_1^{s + u}} } \frac{{\zeta (1 + s + u)}}{{\zeta (1 + s + \alpha )\zeta (1 + u + \beta )}}{A_{\alpha ,\beta }}(s,u,0)\frac{{ds}}{{{s^{i + 1}}}}\frac{{du}}{{{u^{j + 1}}}}.\]
Using the Dirichlet series representation for $\zeta(1+s+u)$, we can separate the complex variables $s$ and $u$. The next step is to use the Laurent expansion
\begin{align}
  \frac{{{A_{\alpha ,\beta }}(s,u,0)}}{{\zeta (1 + s + \alpha )\zeta (1 + u + \beta )}} &= (\alpha  + s)(\beta  + u){A_{0,0}}(0,0,0) + O({L^{ - 3}}) \nonumber \\
   &= (\alpha  + s)(\beta  + u) + O({L^{ - 3}}) \nonumber  
\end{align}
since $A_{0,0}(z,z,z)=1$ for all $z$, in particular for $z=0$. By the use of Lemma \ref{integralwithlogderivative}, we can deform the line integrals into contour integrals around circles of radius $1$ around the origin. Thus,
\[{J_{11}} = \sum\limits_{n \leqslant {y_1}} {{{\left( {\frac{1}{{2\pi i}}} \right)}^2}\oint {{{\left( {\frac{{{y_1}}}{n}} \right)}^s}\frac{{(s + \alpha )ds}}{{{s^{i + 1}}}}} \oint {{{\left( {\frac{{{y_1}}}{n}} \right)}^s}\frac{{(u + \beta )du}}{{{u^{j + 1}}}}} }  + O({L^{i + j - 2}}).\]
These integrals can be computed by the use of \eqref{finalresultlemmaintegrallogderivative}, so that
\[{J_{11}} = \frac{1}{{i!j!}}\frac{{{d^2}}}{{dxdy}}{e^{\alpha x + \beta y}}\sum\limits_{n \leqslant {y_1}} {\frac{1}{n}{{\left( {x + \log \frac{{{y_1}}}{n}} \right)}^i}{{\left( {y + \log \frac{{{y_1}}}{n}} \right)}^j}} {\bigg|_{x = y = 0}} + O({L^{i + j - 2}}).\]
Let us note that
\[\frac{d}{{dx}}{e^{\alpha x}}\sum\limits_{n \leqslant {y_1}} {\frac{1}{n}{{\left( {x + \log \frac{{{y_1}}}{n}} \right)}^i}} {\bigg|_{x = 0}} = \frac{{{{\log }^i}{y_1}}}{{\log {y_1}}}\frac{d}{{dx}}y_1^{\alpha x}{\left( {x + \frac{{\log ({y_1}/n)}}{{\log {y_1}}}} \right)^i}{\bigg|_{x = 0}}.\]
Now sum over $i$ to get 
\[
{P_1}[n] = \sum\limits_i {{a_i}{{\left( {x + \frac{{\log ({y_1}/n)}}{{\log {y_1}}}} \right)}^i}} 
\]
and similarly over $j$ so that
\begin{align}
  I'_{11}(\alpha,\beta) &= \widehat w(0)\zeta (1 + \alpha  + \beta )\sum\limits_{i,j} {\frac{{{a_i}{a_j}}}{{{{\log }^2}{y_1}}}}  \nonumber \\
   &\quad \times \frac{{{d^2}}}{{dxdy}} \bigg[y_1^{\alpha x + \beta y}\sum\limits_{n \leqslant {y_1}} {\frac{1}{n}{{\left( {x + \frac{{\log ({y_1}/n)}}{{\log {y_1}}}} \right)}^i}{{\left( {y + \frac{{\log ({y_1}/n)}}{{\log {y_1}}}} \right)}^j}} {\bigg|_{x = y = 0}} \bigg] + O(T/L) \nonumber \\
   &= \frac{{\widehat w(0)}}{{(\alpha  + \beta ){{\log }^2}{y_1}}}\frac{{{d^2}}}{{dxdy}}\bigg[y_1^{\alpha x + \beta y} \nonumber \\
   &\quad \times \sum\limits_{n \leqslant {y_1}} {\frac{1}{n}P\left( {x + \frac{{\log ({y_1}/n)}}{{\log {y_1}}}} \right)P\left( {y + \frac{{\log ({y_1}/n)}}{{\log {y_1}}}} \right)} {\bigg|_{x = y = 0}}\bigg] + O(T/L) \nonumber \\
   &= \frac{{\widehat w(0)}}{{(\alpha  + \beta ){{\log }^2}{y_1}}}\frac{{{d^2}}}{{dxdy}}\bigg[y_1^{\alpha x + \beta y} \nonumber \\
   &\quad \times \int_1^{{y_1}} {{r^{ - 1}}P\left( {x + \frac{{\log ({y_1}/r)}}{{\log {y_1}}}} \right)P\left( {y + \frac{{\log ({y_1}/r)}}{{\log {y_1}}}} \right)dr} {\bigg|_{x = y = 0}}\bigg] + O(T/L) \nonumber \\
   &= \frac{{\widehat w(0)}}{{(\alpha  + \beta )\log {y_1}}}\frac{{{d^2}}}{{dxdy}}\bigg[y_1^{\alpha x + \beta y}\int_0^1 {P(x + u)P(y + u)du} {\bigg|_{x = y = 0}}\bigg] + O(T/L). \nonumber  
\end{align}
In the second equality we made use of $\zeta(1+\alpha+\beta)=1/(\alpha+\beta)+O(1)$, in the third equality we used the Euler-MacLaurin formula, and the in the fourth equality we employed the change of variables $r=M^{1-u}$. By adding and subtracting the same quantity we find that
\begin{align} \label{addsubstractI11}
{I_{11}}(\alpha ,\beta ) = [I{'_{11}}(\alpha ,\beta ) + I{'_{11}}( - \beta , - \alpha )] + I{'_{11}}( - \beta , - \alpha )({T^{ - \alpha  - \beta }} - 1) + O(T/L).
\end{align}
For the term in square brackets we have
\begin{align}
  c'_{11}(\alpha ,\beta ) + c{'_{11}}( - \beta , - \alpha ) &= \frac{1}{{(\alpha  + \beta )\log {y_1}}}\int_0^1 {(P'(u) + \alpha P(u)\log {y_1})(P'(u) + \beta P(u)\log {y_1})du}  \nonumber \\
   &\quad - \frac{1}{{(\alpha  + \beta )\log {y_1}}}\int_0^1 {(P'(u) - \beta P(u)\log {y_1})(P'(u) - \alpha P(u)\log {y_1})du}  \nonumber \\
   &= \int_0^1 {2P'(u)P(u)du} = 1. \nonumber  
\end{align}
For the other term in \eqref{addsubstractI11} we have
\begin{align}
  c'_{11}( - \beta , - \alpha )({T^{ - \alpha  - \beta }} - 1) &= \frac{{{T^{ - \alpha  - \beta }} - 1}}{{( - \beta  - \alpha )\log {y_1}}}\frac{{{d^2}}}{{dxdy}}y_1^{ - \beta x - \alpha y}\int_0^1 {P(x + u)P(y + u)du} {\bigg|_{x = y = 0}} \nonumber \\
   &= \frac{{1 - {T^{ - \alpha  - \beta }}}}{{(\alpha  + \beta )\log {y_1}}}\frac{{{d^2}}}{{dxdy}}y_1^{ - \beta x - \alpha y}\int_0^1 {P(x + u)P(y + u)du} {\bigg|_{x = y = 0}} \nonumber \\
   &= \frac{1}{{{\theta _1}}}\frac{{{d^2}}}{{dxdy}}y_1^{ - \beta x - \alpha y}\int_0^1 {\int_0^1 {{T^{ - v(\alpha  + \beta )}}P(x + u)P(y + u)dudv} } {\bigg|_{x = y = 0}}, \nonumber  
\end{align}
by the use of
\begin{align} \label{integraltrick}
\frac{{1 - {T^{ - \alpha  - \beta }}}}{{(\alpha  + \beta )\log {y_1}}} = \frac{1}{{{\theta _1}}}\int_0^1 {{T^{ - v(\alpha  + \beta )}}dv} .
\end{align}
The additional restriction that $|\alpha+\beta| \gg L^{-1}$ is dealt with the holomorphy of $I(\alpha,\beta)$ and $c(\alpha,\beta)$ with $\alpha,\beta \ll L^{-1}$ which implies that the error term is also holomorphic in this region. The maximum modulus principle extends the error term to this enlarged domain. This proves Lemma \ref{lemmac11}.
\subsection{Proof of Lemma \ref{lemmac12}}
This is the term involving Conrey's and Feng's mollifiers. To compute this term, let us follow the same strategy as in $I_{11}(\alpha,\beta)$. We first insert the definitions of $\psi_1$ and $\psi_2$ into the mean value integral $I_{12}$ so that
\begin{align}
I_{12}(\alpha,\beta) &= \int_{-\infty}^{\infty} w(t) \zeta(\tfrac{1}{2}+\alpha+it) \zeta(\tfrac{1}{2}+\beta-it) \overline{\psi_1} \psi_2 (\sigma_0 + it)dt \nonumber \\
   & = \int_{-\infty}^{\infty} w(t) \zeta(\tfrac{1}{2}+\alpha+it) \zeta(\tfrac{1}{2}+\beta-it) \nonumber \\
	 & \quad \times \sum_{h \le y_1} \frac{\mu(h)}{h^{1/2-it}}P_1[h] \sum_{k \le y_2} \frac{\mu(k)}{k^{1/2+it}} \sum_{\ell = 2}^K \sum_{p_1 \cdots p_\ell | k} \frac{\log p_1 \cdots \log p_\ell}{\log ^\ell y_2} P_{\ell}[k] dt \nonumber \\
	 & = \sum_{\ell = 2}^K \sum_{h,k} \frac{\mu(h)\mu(k)}{(hk)^{1/2}} P_1[h] \sum_{p_1 \cdots p_\ell | k} \frac{\log p_1 \cdots \log p_\ell}{\log ^\ell y_2} P_{\ell}[k] \nonumber \\
	 & \quad \times \int_{-\infty}^{\infty} w(t) \zeta(\tfrac{1}{2}+\alpha+it) \zeta(\tfrac{1}{2}+\beta-it) \bigg( \frac{k}{h} \bigg)^{-it} dt. \nonumber
\end{align}
As for $I_{11}(\alpha,\beta)$, we use at this point Lemma~\ref{lemmaAFE} to write $I_{12}(\alpha,\beta) = I'_{12}(\alpha,\beta) + I''_{12}(\alpha,\beta) + E(\alpha,\beta)$, where $I'_{12}(\alpha,\beta)$ and $I''_{12}(\alpha,\beta)$ correspond to the two sums of Lemma~\ref{lemmaAFE} and $E(\alpha,\beta)$ is the error term. Specifically, one has
\begin{align} \label{Iprime12}
  I'_{12}(\alpha ,\beta ) &= 
  \sum_{\ell = 2}^K \sum_{h,k} \frac{\mu(h)\mu(k)}{(hk)^{1/2}} P_1[h] \sum_{p_1 \cdots p_\ell | k} \frac{\log p_1 \cdots \log p_\ell}{\log ^\ell y_2} P_{\ell}[k] \nonumber \\
   &\quad \times \sum\limits_{hm = kn} \frac{1}{{{m^{1/2 + \alpha }}{n^{1/2 + \beta }}}}\int_{ - \infty }^\infty  {V_{\alpha ,\beta }}(mn,t)w(t)dt ,  
\end{align}
and for reasons of symmetry, $I''_{12}(\alpha,\beta)$ can be obtained from $I'_{12}(\alpha,\beta)$ by switching $\alpha$ and $-\beta$ and multiplying by
\[
\bigg( \frac{t}{2\pi} \bigg)^{-\alpha-\beta} = T^{-\alpha-\beta} + O(L^{-1}),
\]
for $t \asymp T$. We thus see that it is enough to compute $I'_{12}(\alpha,\beta)$. The error term is given
\begin{align}
E(\alpha,\beta) &\ll_{A,\theta_1,\theta_\ell} T^{-A} \sum_{\ell = 2}^K \sum_{h,k} \frac{\mu(h)\mu(k)}{(hk)^{1/2}} P_1[h] \sum_{p_1 \cdots p_\ell | k} \frac{\log p_1 \cdots \log p_\ell}{\log ^\ell y_2} P_{\ell}[k] \nonumber \\
& \ll T^{-A} \sum_{\ell = 2}^K \sum_{h \le y_1} \sum_{k \le y_2} \frac{1}{(hk)^{1/2}} \sum_{p_1 \cdots p_\ell | k} 1 \ll T^{-A} \sum_{\ell = 2}^K \sum_{h \le y_1} \sum_{k \le y_2} \frac{1}{(hk)^{1/2}} (d(k))^{\ell}  \nonumber \\ 
& \ll T^{-A} \sum_{h \le y_1} \frac{1}{h^{1/2-\varepsilon}} \sum_{k \le y_2} \frac{1}{k^{1/2-\varepsilon}}  \ll T^{-A} y_1^{1/2-\varepsilon} y_{\ell}^{1/2-\varepsilon}  \nonumber \\
& = T^{-A} T^{\theta_1 (1/2-\varepsilon) \theta_{2}(1/2-\varepsilon)} = T^{-A + (\theta_1+\theta_\ell)/2 - \varepsilon} \nonumber
\end{align}
for any $A>2$. We remark that the above computation works for $\theta_1 + \theta_{2}$ arbitrarily large but the error term $T^{-A}$ coming from Lemma~\ref{lemmaAFE} is only valid for $\theta_1 + \theta_{2} < 1$. The next step is to use the Mellin integral representations of the polynomials $P_1$
\[{P_1}[h] = \sum\limits_i {\frac{{{a_i}}}{{{{\log }^i}{y_1}}}} {(\log ({y_1}/h))^i} = \sum\limits_i {\frac{{{a_i}i!}}{{{{\log }^i}{y_1}}}} \frac{1}{{2\pi i}}\int_{(1)} {{{\left( {\frac{{{y_1}}}{h}} \right)}^s}\frac{{ds}}{{{s^{i + 1}}}}}, \]
and $P_{\ell}$
\[{P_\ell }[k] = \sum\limits_j {\frac{{{b_{\ell ,j}}}}{{{{\log }^j}{y_2}}}} {(\log ({y_2}/k))^j} = \sum\limits_j {\frac{{{b_{\ell ,j}}j!}}{{{{\log }^j}{y_2}}}} \frac{1}{{2\pi i}}\int_{(1)} {{{\left( {\frac{{{y_2}}}{k}} \right)}^u}\frac{{du}}{{{u^{j + 1}}}}} ,\]
and the definition of $V_{\alpha ,\beta}$ in Lemma~\ref{lemmaAFE} to write
\begin{align}
I'_{12}(\alpha,\beta) &= \int_{-\infty}^{\infty} w(t) \sum_{\ell=2}^L \sum_{i,j} \frac{a_i b_{\ell,j} i! j!}{\log^i y_1 \log^{j+\ell}y_2} \nonumber \\
 & \quad \times \sum_{km=hn} \frac{\mu(h)\mu(k)}{(hk)^{1/2} m^{1/2+\alpha} n^{1/2+\beta}} \sum_{p_1 \cdots p_\ell | k} \log p_1 \cdots \log p_\ell \nonumber \\
 & \quad \times \bigg( \frac{1}{2 \pi i} \bigg)^3 \int_{(1)} \int_{(1)} \int_{(1)} \bigg( \frac{y_1}{h} \bigg)^s \bigg( \frac{y_2}{k} \bigg)^u \frac{g_{\alpha,\beta}(z,t)}{(mn)^z} \frac{G(z)}{z} dz \frac{ds}{s^{i+1}} \frac{du}{u^{j+1}} dt. \nonumber
\end{align}
We now have to compute the arithmetical sum $\sum_{km=hn}$. Further details on this procedure can be found in \cite{rrz01}. Let us define
\[
S_\ell = S_{\ell,\alpha,\beta}(s,u,z) = \sum_{km=hn} \frac{\mu(h)\mu(k)}{(hk)^{1/2} m^{1/2+\alpha+z} n^{1/2+\beta+z}} \sum_{p_1 \cdots p_\ell | k} \log p_1 \cdots \log p_\ell.
\]
We start by inverting the order of the sum so that
\begin{align} \label{comparefirst}
S_{\ell} &= (-1)^{\ell} \sum_{\substack{p_i \ne p_j \\ i<j}} \log p_1 \cdots \log p_\ell \sum_{\substack{ hn = p_1 \cdots p_\ell \tilde k m \\ (p_1 \cdots p_\ell ,\tilde k)=1}} \frac{\mu(h) \mu(\tilde k)}{h^{1/2+s}\tilde k^{1/2+u}m^{1/2 + \alpha + z}n^{1/2 +\beta +z}} \frac{1}{(p_1 \cdots p_\ell)^{1/2+u}} \nonumber \\ 
&= (-1)^\ell \sum_{\substack{p_i \ne p_j \\ i<j}} \frac{\log p_1 \cdots \log p_\ell}{(p_1 \cdots p_\ell)^{1/2+u}} \tilde S_{\ell,\alpha,\beta}(s,u,z),
\end{align}
where $k = \tilde k p_1 \cdots p_\ell$ and where we define the inner sum to be
\[
\tilde S_{\ell} = \tilde S_{\ell,\alpha,\beta}(s,u,z) = \sum_{\substack{h, \tilde k, m, n \\ hn=p_1 \cdots p_\ell \tilde k m \\ (p_1 \cdots p_\ell, \tilde k)=1}} \frac{\mu(h) \mu(\tilde k)}{h^{1/2+s}\tilde k^{1/2+u}m^{1/2 + \alpha + z}n^{1/2 +\beta +z}} .
\]
Recall that $\nu_p(n)=n'$ denotes the number of times the prime number $p$ appears in $n$. We can write the above as
\begin{align} \label{comparemiddle}
\tilde S_{\ell} &= \prod_{p \in \{p_1, \cdots, p_\ell \}} \sum_{h'+n'=m'+1} \frac{\mu(p^{h'})}{(p^{h'})^{1/2+s} (p^{m'})^{1/2+\alpha + z} (p^{n'})^{1/2+\beta+z}} \nonumber \\
& \quad \times \prod_{p \notin \{p_1, \cdots, p_\ell \}} \sum_{h'+n'=k'+m'} \frac{\mu(p^{h'})\mu(p^{k'})}{(p^{h'})^{1/2+s} (p^{k'})^{1/2+u} (p^{m'})^{1/2+\alpha + z} (p^{n'})^{1/2+\beta+z}} \nonumber \\
& = \frac{\Pi_1(\alpha,\beta,s,u,z)}{\Pi_2(\alpha,\beta,s,u,z)}{\Pi_3(\alpha,\beta,s,u,z)},
\end{align}
where we define
\begin{align}
\Pi_1(\alpha,\beta,s,u,z) &= \prod_p \sum_{h'+n'=k'+m'} \frac{\mu(p^{h'})\mu(p^{k'})}{(p^{h'})^{1/2+s} (p^{k'})^{1/2+u} (p^{m'})^{1/2+\alpha + z} (p^{n'})^{1/2+\beta+z}} \nonumber \\
& = \prod_p \bigg( 1 + \frac{1}{p^{1+s+u}} - \frac{1}{p^{1+s+\alpha +z}} - \frac{1}{p^{1+u+\beta +z}} +\frac{1}{p^{1+\alpha+\beta +2z}} +O(p^{-2+\varepsilon}) \bigg), \nonumber
\end{align}
as well as
\begin{align}
\Pi_2(\alpha,\beta,s,u,z) &= \prod_{p \in \{ p_1, \cdots, p_\ell \}} \sum_{h'+n'=k'+m'} \frac{\mu(p^{h'})\mu(p^{k'})}{(p^{h'})^{1/2+s} (p^{k'})^{1/2+u} (p^{m'})^{1/2+\alpha + z} (p^{n'})^{1/2+\beta+z}} \nonumber \\
& = \prod_{p \in \{p_1, \cdots, p_\ell \}} \bigg( 1 + \frac{1}{p^{1+s+u}} - \frac{1}{p^{1+s+\alpha +z}} - \frac{1}{p^{1+u+\beta +z}} +\frac{1}{p^{1+\alpha+\beta +2z}} +O(p^{-2+\varepsilon}) \bigg), \nonumber
\end{align}
and finally
\begin{align}
\Pi_3(\alpha,\beta,s,u,z) &= \prod_{p \in \{ p_1, \cdots, p_\ell \}} \sum_{h'+n'=m'+1} \frac{\mu(p^{h'})}{(p^{h'})^{1/2+s}  (p^{m'})^{1/2+\alpha + z} (p^{n'})^{1/2+\beta+z}} \nonumber \\
& = \prod_{p \in \{p_1, \cdots, p_\ell \}} \bigg(\frac{1}{p^{1/2+\beta +z}} - \frac{1}{p^{1/2+s}} + O(p^{-2+\varepsilon}) \bigg). \nonumber
\end{align}
%
 Hence we arrive at the following expression for $\tilde S_\ell$ 
%
\begin{align}
\tilde S_\ell &= \prod_{p} \bigg( 1 + \frac{1}{p^{1+s+u}} -\frac{1}{p^{1+s+\alpha+z}} - \frac{1}{p^{1+u+\beta+z}} + \frac{1}{p^{1+\alpha+\beta+2z}} + O(p^{-2+\varepsilon}) \bigg) \nonumber \\
& = \frac{\zeta(1+s+u)\zeta(1+\alpha+\beta+2z)}{\zeta(1+u+\beta+z)\zeta(1+s+\alpha+z)} A_{\alpha,\beta}(s,u,z), \nonumber 
\end{align}
where the arithmetical factor $A_{\alpha,\beta}(s,u,z)$ is given by an absolutely convergent Euler product in some product of half-planes containing the origin. 
Therefore, when we go back to the expression for $S_{\ell}$ in \eqref{comparefirst}, we obtain the following
\begin{align} \label{comparelast}
S_{\ell} &= \frac{\zeta(1+s+u)\zeta(1+\alpha+\beta+2z)}{\zeta(1+u+\beta+z)\zeta(1+s+\alpha+z)} A_{\alpha,\beta}(s,u,z) (-1)^{\ell} \sum_{\substack{p_i \ne p_j \\ i<j}} \log p_1 \cdots \log p_\ell \nonumber \\
& \quad \times \prod_{p \in \{ p_1, \cdots, p_\ell \}} \frac{E(p)+O(p^{-2+\varepsilon})}{1 - \tfrac{1}{p^{1+s+\alpha+z}} + \tfrac{1}{p^{1+\alpha+\beta+2z}} - E(p) + O(p^{-2 + \varepsilon})},
\end{align}
where
\[
E(p) = \frac{1}{p^{1/2+u}} \bigg( - \frac{1}{p^{1/2+s}} + \frac{1}{p^{1/2+\beta+z}} \bigg) = - \frac{1}{p^{1+s+u}} + \frac{1}{p^{1+\beta+u+z}}. 
\]
At this stage, we compare \eqref{comparelast} in its exact form (that is, with big-$O$ terms replaced by their exact expressions) against \eqref{comparefirst} and \eqref{comparemiddle} in its exact form, and we use the fact that for $\alpha = \beta = 0$ and $s=u=z$, the ratio of zeta functions
\[
\frac{\zeta(1+s+u)\zeta(1+\alpha+\beta+2z)}{\zeta(1+u+\beta+z)\zeta(1+s+\alpha+z)}
\]
reduces to $1$. In other words, reverting the $p$-adic analysis in
\begin{align}
\frac{\zeta(1+s+u)\zeta(1+\alpha+\beta+2z)}{\zeta(1+u+\beta+z)\zeta(1+s+\alpha+z)} & A_{\alpha,\beta}(s,u,z) \nonumber \\
& = \prod_p \sum_{h'+n'=k'+m'} \frac{\mu(p^{h'})\mu(p^{k'})}{(p^{h'})^{1/2+s} (p^{k'})^{1/2+u} (p^{m'})^{1/2+\alpha + z} (p^{n'})^{1/2+\beta+z}}, \nonumber
\end{align}
we find that
\[
\frac{\zeta(1+s+u)\zeta(1+\alpha+\beta+2z)}{\zeta(1+u+\beta+z)\zeta(1+s+\alpha+z)} A_{\alpha,\beta}(s,u,z) = \sum_{hn=km} \frac{\mu(h)\mu(k)}{h^{1/2+s}k^{1/2+u}m^{1/2+\alpha+z}n^{1/2+\beta+z}}.
\]
Following \eqref{arithmeitcal1}, we know that
\[
A_{0,0}(z,z,z)= \sum_{km=hn} \frac{\mu(h)\mu(k)}{(hkmn)^{1/2+z}},
\]
and thus, we find that
\[
A_{0,0}(z,z,z)=1
\]
for all $z$. Let us denote the last part of \eqref{comparelast} by $H_{\ell}$, specifically
\begin{align}
H_\ell &= (-1)^\ell \sum_{\substack{p_i \ne p_j \\ i<j}} \prod_{p \in \{p_1, \cdots, p_\ell \}} (E(p) + O(p^{-2+\varepsilon})) \log p \bigg(1 + E(p) + \frac{1}{p^{1+s+\alpha+z}} - \frac{1}{p^{1+\alpha+\beta+2z}} + O(p^{-2+\varepsilon}) \bigg) \nonumber \\
& = (-1)^\ell \sum_{\substack{p_i \ne p_j \\ i<j}} \prod_{p \in \{p_1, \cdots, p_\ell \}} \bigg( E(p) \log p + O\bigg( \frac{\log p}{p^{2-\varepsilon}} \bigg) \bigg). \nonumber
\end{align}
We now employ the principle of inclusion-exclusion to write
\[
H_\ell = (-1)^\ell \bigg(\sum_{p} E(p) \log p + O\bigg( \frac{\log p}{p^{2-\varepsilon}} \bigg) \bigg)^\ell + \sum_p B(p),
\]
where
\[
B(p) \ll_{\alpha, \beta, s, u, z, \varepsilon} \frac{1}{p^{2-\varepsilon}}.
\]
To end the computation, we must identify the logarithms of the prime numbers with the signature of the von Mangoldt function $\Lambda(n)$ and hence match the resulting expressions to logarithmic derivatives of the Riemann zeta-function by the use of
\[
\frac{\zeta'}{\zeta}(s) = - \sum_{n=1}^\infty {\Lambda(n)}{n^{-s}} = - \sum_p \frac{\log p}{p^s}\bigg(1 -\frac{1}{p^s} \bigg)^{-1} = - \sum_p \frac{\log p}{p^s} + O\bigg( \frac{\log p}{p^{2s}} \bigg),
\]
for $\real(s) > 1$. With this in mind, $H_\ell$ becomes
\begin{align}
H_\ell &= (-1)^\ell \bigg( \frac{\zeta'}{\zeta} (1+s+u) - \frac{\zeta'}{\zeta} (1+\beta+u+z) + O(1) \bigg)^\ell + D(\alpha,\beta,s,u,z) \nonumber \\
& = (-U)^\ell + \sum_{m=0}^{\ell-1} U^m B_m(\alpha,\beta,s,u,z) + D(\alpha,\beta,s,u,z), \nonumber
\end{align}
where $D(\alpha,\beta,s,u,z)$ are terms of smaller order and where
\[
U = - \frac{\zeta'}{\zeta} (1+s+u) + \frac{\zeta'}{\zeta} (1+\beta+u+z).
\]
We also have
that
\[
B_m(\alpha, \beta, s, u, z) \ll_{\alpha,\beta,s,u,z} \sum_p \frac{\log p}{p^{2-\varepsilon}}.
\]
All of these terms are analytic in a larger region of the complex plane, thus we are only interested in the term $U^\ell$. Consequently, the end result of this is that
\begin{align}
I'_{12}(\alpha,\beta) &= \int_{-\infty}^\infty w(t) \sum_{\ell = 2}^K \sum_{i,j} \frac{a_i b_{\ell,j} i! j!}{\log^i y_1 \log^{j+\ell}y_2} \bigg( \frac{1}{2 \pi i}\bigg)^3 \int_{(1)} \int_{(1)} \int_{(1)} \nonumber \\
& \quad \times \frac{\zeta(1+s+u)\zeta(1+\alpha+\beta+2z)}{\zeta(1+u+\beta+z)\zeta(1+s+\alpha+z)} A_{\alpha,\beta}(s,u,z) \bigg( \frac{\zeta'}{\zeta} (1+s+u) - \frac{\zeta'}{\zeta} (1+\beta+u+z) \bigg)^\ell \nonumber \\
& \quad \times (-1)^\ell y_1^s y_2^u  \frac{G(z)}{z} g_{\alpha,\beta}(z,t) \frac{ds}{s^{i+1}} \frac{du}{u^{j+1}}dt. \nonumber
\end{align}
The next step is to deform the path of integration to $\real(z) = -\delta + \varepsilon$ where $\delta > 0$ is small, fixed and $\delta < \varepsilon$ as well as $\real(s) = \real(u) = \delta$. By doing this, we pick up the contribution of the residue of the simple pole of $1/z$ at $z=0$ only, since, as before in the $I_{11}(\alpha,\beta)$ case, $G(z)$ vanishes at the pole of $\zeta(1+\alpha+\beta+2z)$. 
The new path of integration with respect to $z$ contributes
\begin{align} \label{newpatherrorterms}
\ll T^{1+\varepsilon} \bigg( \frac{y_1 y_2}{T} \bigg)^\delta \ll T^{1-\varepsilon}.
\end{align}
by keeping $\theta_1+\theta_2 = 1-\varepsilon$ (since $y_1 = T^{\theta_1}$ and $y_2 = T^{\theta_2}$). We now write
\[
I'_{12}(\alpha,\beta) = I'_{120}(\alpha,\beta) + O(T^{-1+\varepsilon}), 
\]
where $I'_{120}(\alpha,\beta)$ corresponds to the residue at $z=0$. Then
\begin{align} \label{IintoJ}
  I'_{120}(\alpha ,\beta ) &= \int_{ - \infty }^\infty  {w(t)} \sum\limits_{\ell  = 2}^K {\sum\limits_{i,j} {\frac{{{a_i}{b_{\ell ,j}}i!j!}}{{{{\log }^i}{y_1}{{\log }^{j + \ell }}{y_2}}}} }  \nonumber \\
   &\quad \times {\left( {\frac{1}{{2\pi i}}} \right)^2}\int_{(\delta )} \int_{(\delta )} {\mathop {\operatorname{res} }\limits_{z = 0} } \frac{{G(z)}}{z} g_{\alpha,\beta}(z,t) y_1^sy_2^u\frac{{\zeta (1 + s + u)\zeta (1 + \alpha  + \beta  + 2z)}}{{\zeta (1 + s + \alpha  + z)\zeta (1 + u + \beta  + z)}}{A_{\alpha ,\beta }}(s,u,z) \nonumber \\
   & \quad \times {( - 1)^\ell }{\left( {\frac{{\zeta '}}{\zeta }(1 + s + u) - \frac{{\zeta '}}{\zeta }(1 + \beta  + u + z)} \right)^\ell }\frac{{ds}}{{{s^{i + 1}}}}\frac{{du}}{{{u^{j + 1}}}}dt \nonumber \\
   &= \widehat w(0)\zeta (1 + \alpha  + \beta )\sum\limits_{\ell  = 2}^K {(-1)^\ell} {\sum\limits_{i,j} {\frac{{{a_i}{b_{\ell ,j}}i!j!}}{{{{\log }^i}{y_1}{{\log }^{j + \ell }}{y_2}}}} } {J_{12}}, 
\end{align}
where
\begin{align}
J_{12}(\alpha,\beta) &= \bigg( \frac{1}{2 \pi i}\bigg)^2 \int_{(\delta)} \int_{(\delta)} \frac{\zeta(1+s+u) A_{\alpha,\beta}(s,u,0)}{\zeta(1+u+\beta)\zeta(1+s+\alpha)} \bigg( \frac{\zeta'}{\zeta} (1+s+u) - \frac{\zeta'}{\zeta} (1+\beta+u) \bigg)^\ell  y_1^s y_2^u \frac{ds}{s^{i+1}} \frac{du}{u^{j+1}}. \nonumber
\end{align}
Let us now use the binomial theorem to write
\begin{align}
  J_{12}(\alpha ,\beta ) &= {\left( {\frac{1}{{2\pi i}}} \right)^2}\int_{(\delta )} {\int_{(\delta )} {\frac{{\zeta (1 + s + u){A_{\alpha ,\beta }}(s,u,0)}}{{\zeta (1 + \beta  + u)\zeta (1 + \alpha  + s)}}} }  \nonumber \\
   &\quad \times \sum\limits_{r = 0}^\ell  \binom{\ell}{r} {\left( {\frac{{\zeta '}}{\zeta }(1 + \beta  + u)} \right)^{\ell  - r}}{\left( { - \frac{{\zeta '}}{\zeta }(1 + s + u)} \right)^r}y_1^sy_2^u\frac{{ds}}{{{s^{i + 1}}}}\frac{{du}}{{{u^{j + 1}}}} \nonumber \\
   &= {\left( {\frac{1}{{2\pi i}}} \right)^2}\int_{(\delta )} {\int_{(\delta )} {\frac{{{A_{\alpha ,\beta }}(s,u,0)}}{{\zeta (1 + \beta  + u)\zeta (1 + \alpha  + s)}}} } \sum\limits_{r = 0}^\ell  \binom{\ell}{r} {\left( {\frac{{\zeta '}}{\zeta }(1 + \beta  + u)} \right)^{\ell  - r}} \nonumber \\
   &\quad \times \sum\limits_{n = 1}^\infty  {\frac{{(\mathbf{1} * {\Lambda ^{ * r}})(n)}}{{{n^{1 + s + u}}}}} y_1^sy_2^u\frac{{ds}}{{{s^{i + 1}}}}\frac{{du}}{{{u^{j + 1}}}} \nonumber \\
   &= \sum\limits_{n \leqslant \min ({y_1},{y_2})} {\sum\limits_{r = 0}^\ell  \binom{\ell}{r} \frac{{(\mathbf{1} * {\Lambda ^{ * r}})(n)}}{n}} {\left( {\frac{1}{{2\pi i}}} \right)^2} \nonumber \\
   &\quad \times \int_{(\delta )} {\int_{(\delta )} {\frac{A_{\alpha,\beta}(s,u,0)}{{\zeta (1 + \beta  + u)\zeta (1 + \alpha  + s)}}} } {\left( {\frac{{\zeta '}}{\zeta }(1 + \beta  + u)} \right)^{\ell  - r}}{\left( {\frac{{{y_1}}}{n}} \right)^s}{\left( {\frac{{{y_2}}}{n}} \right)^u}\frac{{ds}}{{{s^{i + 1}}}}\frac{{du}}{{{u^{j + 1}}}}, \nonumber 
\end{align}
where we have used the Dirichlet convolution of
\[
\zeta(s) = \sum_{n=1}^\infty \frac{1}{n^s} \quad \textnormal{and} \quad -\frac{\zeta'}{\zeta}(s) = \sum_{n=1}^\infty \frac{\Lambda(n)}{n^s},
\]
for $\real(s) > 1$. Here $\mathbf{1}(n)=1$ for all $n$ denotes the identity function. Next, we take $\delta \asymp L^{-1}$ and bound the integral trivially to get $J_{12} \ll L^{i+j-1}$. 
This means that we can use a Taylor series so that $A_{\alpha,\beta}(s,u,0) = A_{0,0}(0,0,0) + O(|s| + |u|)$ to write $J_{12}(\alpha,\beta)=J'_{12}(\alpha,\beta) + O(L^{i+j-2})$, say. 
We recall that we have shown earlier that $A_{0,0}(z,z,z)=1$ for all $z$, in particular $A_{0,0}(0,0,0)=1$. This implies that the complex variables $s$ and $u$ are now separated as
\[
J'_{12}(\alpha ,\beta ) = \sum\limits_{n \leqslant \min ({y_1},{y_2})} {\sum\limits_{r = 0}^\ell  \binom{\ell}{r} \frac{{(\mathbf{1} * {\Lambda ^{ * r}})(n)}}{n}} {L_{12,1}}{L_{12,2}},
\]
where 
\[{L_{12,1}} = \frac{1}{{2\pi i}}\int_{(\delta )} {\frac{1}{{\zeta (1 + \alpha  + s)}}{{\left( {\frac{{{y_1}}}{n}} \right)}^s}\frac{{ds}}{{{s^{i + 1}}}}} ,\]
and
\begin{align} \label{L2CF}
{L_{12,2}} = \frac{1}{{2\pi i}}\int_{(\delta )} {\frac{1}{{\zeta (1 + \beta  + u)}}{{\left( {\frac{{\zeta '}}{\zeta }(1 + \beta  + u)} \right)}^{\ell  - r}}{{\left( {\frac{{{y_2}}}{n}} \right)}^u}\frac{{du}}{{{u^{j + 1}}}}}.
\end{align}
The first of these two integrals was dealt with in the $I_{11}(\alpha,\beta)$ case and its main term is
\[
L_{12,1} = \frac{1}{2 \pi i} \oint (\alpha+s) \bigg( \frac{y_1}{n} \bigg)^s \frac{ds}{s^{i+1}}  = \frac{1}{{i!}}\frac{d}{{dx}}{e^{\alpha x}}{\left( {x + \log \frac{{{y_1}}}{n}} \right)^i}{\bigg|_{x = 0}}.
\]
For the second integral we will need the following Lemma \ref{integralwithlogderivative} and equation \eqref{finalresultlemmaintegrallogderivative}. Hence, one gets
\begin{align}
  L_{12,2} = \frac{(-1)^{\ell-r}}{2 \pi i} \oint (\beta + u)^{1-\ell+r} \bigg(\frac{y_2}{n}\bigg)^u \frac{du}{u^{j+1}} = \frac{{{{( - 1)}^{\ell  - r}}}}{{j!}}\frac{{{d^{1 - \ell  + r}}}}{{d{y^{1 - \ell  + r}}}}{e^{\beta y}}{\left( {y + \log \frac{{{y_2}}}{n}} \right)^j}{\bigg|_{y = 0}}. \nonumber 
\end{align} 
This means that when we insert these results into $J'_{12}$ we obtain
\begin{align}
  J'_{12}(\alpha ,\beta ) &= \frac{1}{{i!}}\frac{1}{{j!}}\sum\limits_{n \leqslant \min ({y_1},{y_2})} {\sum\limits_{r = 0}^\ell  {{{( - 1)}^{\ell  - r}}\binom{\ell}{r}} \frac{{(\mathbf{1} * {\Lambda ^{ * r}})(n)}}{n}}  \nonumber \\
   &\quad \times \frac{d}{{dx}}\frac{{{d^{1 - \ell  + r}}}}{{d{y^{1 - \ell  + r}}}}{e^{\alpha x + \beta y}}{\left( {x + \log \frac{{{y_1}}}{n}} \right)^i}{\bigg|_{x = 0}}{\left( {y + \log \frac{{{y_2}}}{n}} \right)^j}{\bigg|_{y = 0}} + O(L^{i+j-2}). \nonumber 
\end{align}
By making the changes
\[
x \to \frac{x}{{\log {y_1}}} \quad \textnormal{and} \quad y \to \frac{y}{{\log {y_2}}},
\]
we can write this in the more convenient form
\begin{align}
  J'_{12}(\alpha ,\beta ) &= \frac{{{{\log }^{i - 1}}{y_1}{{\log }^{j - 1}}{y_2}}}{{i!j!}}\sum\limits_{n \leqslant \min ({y_1},{y_2})} {} \sum\limits_{r = 0}^\ell  {} {( - 1)^{\ell  - r}}\binom{\ell}{r}\frac{{(\mathbf{1} * {\Lambda ^{ * r}})(n)}}{n} \nonumber \\
   &\quad \times \frac{d}{{dx}}\frac{{{d^{1 - \ell  + r}}}}{{d{y^{1 - \ell  + r}}}}y_1^{\alpha x}y_2^{\beta y}{\left( {x + \frac{{\log ({y_1}/n)}}{{\log {y_1}}}} \right)^i}{\bigg|_{x = 0}}{\left( {y + \frac{{\log ({y_2}/n)}}{{\log {y_2}}}} \right)^j}{\bigg|_{y = 0}} + O(L^{i+j-2}). \nonumber 
\end{align}
Telescoping back to \eqref{IintoJ} we obtain that
\begin{align}
  I'_{120}(\alpha ,\beta ) &= \frac{{\widehat w(0)}}{{(\alpha  + \beta )\log {y_1}\log {y_2}}}\frac{{{d^2}}}{{dxdy}} \bigg[y_1^{\alpha x}y_2^{\beta y} \nonumber \\
   &\quad \times \sum\limits_{\ell  = 2}^L {(-1)^\ell} \frac{1}{{{{\log }^\ell }{y_2}}}\sum\limits_{r = 0}^\ell  {} {( - 1)^{\ell  - r}}\binom{\ell}{r}\frac{{{d^{r - \ell }}}}{{d{y^{r - \ell }}}} \nonumber \\
   &\quad \times \sum\limits_{n \leqslant \min ({y_1},{y_2})} {} \frac{{(\mathbf{1} * {\Lambda ^{ * r}})(n)}}{n}{P_1}\left( {x + \frac{{\log ({y_1}/n)}}{{\log {y_1}}}} \right){P_\ell }\left( {y + \frac{{\log ({y_2}/n)}}{{\log {y_2}}}} \right){\bigg|_{x = y = 0}} \bigg] + O(T/L), \nonumber 
\end{align}
where the sum over $i$ has been identified to the polynomial $P_1$, and the sum over $j$ to the polynomials $P_{\ell}$. We now perform the summation over $n$ by using Lemma \ref{lemma36rrz01}. To do so, we now set $y_1 \ge y_2$. The lemma yields
\begin{align}
  \sum\limits_{n \leqslant {y_2}} {\frac{{(\mathbf{1} * {\Lambda ^{ * r}})(n)}}{{{n^{1 + s}}}}} & {P_1}\left( {x + \frac{{\log ({y_1}/n)}}{{\log {y_1}}}} \right){P_\ell }\left( {y + \frac{{\log ({y_2}/n)}}{{\log {y_2}}}} \right) \nonumber \\
   &= \frac{{{{\log }^{r+1}}{y_2}}}{{y_2^s}}\int_0^1 (1-u)^r{P_1}\left( {x + 1 - (1 - u)\frac{{{\theta _2}}}{{{\theta _1}}}} \right){P_\ell }(y + u)y_2^{us}du  + O(\log(3y_2)^{r}). \nonumber 
\end{align}
Therefore, the resulting expression for $I'_{120}$ is
\begin{align}
  I'_{120}(\alpha ,\beta ) &= \frac{{\widehat w(0)}}{{(\alpha  + \beta )\log {y_1}\log {y_2}}}\frac{{{d^2}}}{{dxdy}} \bigg[\int_0^1 {}  \nonumber \\
   &\quad \times y_1^{\alpha x}y_2^{\beta y}\sum\limits_{\ell  = 2}^L {(-1)^\ell} \frac{1}{{{{\log }^\ell }{y_2}}}\sum\limits_{r = 0}^\ell  {} {( - 1)^{\ell  - r}}\binom{\ell}{r}\frac{{{d^{r - \ell }}}}{{d{y^{r - \ell }}}} \nonumber \\
   &\quad \times \frac{{{{\log }^{r+1}}{y_2}}}{{r!}}{{(1 - u)}^{r}}{P_1}\left( {x + 1 - (1 - u)\frac{{{\theta _2}}}{{{\theta _1}}}} \right){P_\ell }(y + u)du {\bigg|_{x = y = 0}} \bigg] + O(T/L) \nonumber 
\end{align}
Now we must go back to $I_{12}$. We recall that $I_{12}(\alpha,\beta)$ was formed by adding $I'_{12}(\alpha,\beta)$ and $I''_{12}(\alpha,\beta)$, where $I''_{12}$ is formed by taking $I'_{12}$, switching $\alpha$ and $-\beta$, and then multiplying by $T^{-\alpha-\beta}$. Note that $r\leq \ell$ and thus only the case $r=\ell$ contributes to the main term. Therefore
 \begin{align}
  I'_{12}(\alpha ,\beta ) &= \frac{{\widehat w(0)}}{{(\alpha  + \beta )\log {y_1}}}\sum\limits_{\ell  = 2}^L {(-1)^\ell} \frac{1}{{\ell !}} \frac{{{d^2}}}{{dxdy}} \bigg[  y_1^{\alpha x}y_2^{\beta y}\nonumber \\
   &\times\quad \int_0^1 {} {{(1 - u)}^{\ell}}{P_1}\left( {x + 1 - (1 - u)\frac{{{\theta _2}}}{{{\theta _1}}}} \right){P_\ell }(y + u)du {\bigg|_{x = y = 0}} \bigg] + O(T/L). \nonumber 
\end{align}
We now use 
\begin{align*}
I_{12}(\alpha,\beta) &= I'_{12}(\alpha,\beta) +T^{-\alpha-\beta}I'_{12}(-\beta,-\alpha) +O(T/L)\\
&= \bigl(I'_{12}(\alpha,\beta) +I'_{12}(-\beta,-\alpha)\bigr)+ \bigl(T^{-\alpha-\beta}-1\bigr)I'_{12}(-\beta,-\alpha) +O(T/L).
\end{align*}
We first take a look at the first term in the brackets
 \begin{align*}
  &\frac{d^2}{dxdy}\bigg[\left(y_1^{\alpha x}y_2^{\beta y}-y_1^{-\beta x}y_2^{-\alpha y}\right)
 \int_0^1 {} {{(1 - u)}^{\ell}}{P_1}\left( {x + 1 - (1 - u)\frac{{{\theta _2}}}{{{\theta _1}}}} \right){P_\ell }(y + u)du {\bigg|_{x = y = 0}}\bigg]\\
  =& \
  (\alpha+\beta)\log y_1 \int_0^1  {{(1 - u)}^{\ell}}{P_1}\left( {1 - (1 - u)\frac{{{\theta _2}}}{{{\theta _1}}}} \right){P'_\ell }(y)du\\
  &+ (\alpha+\beta)\log y_2 \int_0^1 {{(1 - u)}^{\ell}}{P'_1}\left( {1 - (1 - u)\frac{{{\theta _2}}}{{{\theta _1}}}} \right){P_\ell }(y)du\\
  =& \
    (\alpha+\beta)\log y_1 \left(\int_0^1  {{(1 - u)}^{\ell}}{P_1}\left( {1 - (1 - u)\frac{{{\theta _2}}}{{{\theta _1}}}}\right){P'_\ell }(y)du +\int_0^1 {{(1 - u)}^{\ell}}{P'_1}\left( {1 - (1 - u)\frac{{{\theta _2}}}{{{\theta _1}}}} \right){P_\ell }(y)du\right)\\
    &- (\alpha+\beta)(\theta_1-\theta_2)\log T \int_0^1 {{(1 - u)}^{\ell}}{P'_1}\left( {1 - (1 - u)\frac{{{\theta _2}}}{{{\theta _1}}}} \right){P_\ell }(y)du
 \end{align*}
Since we had that $P_1(0)=P_\ell(0)=0$ it follows that
\begin{align*}
  &0 = {(1 - u)}^{\ell} P_{1}(u) P_{\ell}(u)\bigg|_{u = 0}^1 
  = 
  \int_0^1 \left({(1 - u)}^{\ell} P_{1}(u) P_{\ell}(u)\right)'du.
   \end{align*}
We can therefore write 
 \begin{align*}
 \ell  \int_0^1 {(1 - u)}^{\ell-1} P_{1}(u) P_{\ell}(u)du
 =&
 \int_0^1  {{(1 - u)}^{\ell}}{P_1}\left( {1 - (1 - u)\frac{{{\theta _2}}}{{{\theta _1}}}}\right){P'_\ell }(u)du \\
 &+\int_0^1 {{(1 - u)}^{\ell}}{P'_1}\left( {1 - (1 - u)\frac{{{\theta _2}}}{{{\theta _1}}}} \right){P_\ell }(u)du.
   \end{align*}
Combining these observations, we see that   
\begin{align*}
   I'_{12}(\alpha,\beta) +I'_{12}(-\beta,-\alpha)
   =&
   \widehat w(0) \sum\limits_{\ell  = 2}^L  \frac{{(-1)^\ell}}{{(\ell-1) !}}
   \int_0^1 {(1 - u)}^{\ell-1} P_{1}(u) P_{\ell}(u)du  \\
   &- \widehat w(0) \frac{\theta_1-\theta_2}{\theta_1}\sum\limits_{\ell  = 2}^L  \frac{{(-1)^\ell}}{{\ell !}}\int_0^1 {{(1 - u)}^{\ell}}{P'_1}\left( {1 - (1 - u)\frac{{{\theta _2}}}{{{\theta _1}}}} \right){P_\ell }(u)du.
\end{align*}
For the expression $(T^{ - \alpha  - \beta }-1)I{'_{12}}( - \beta , - \alpha )$, we use \eqref{integraltrick} to get 
\begin{align*}
  &(T^{-\alpha-\beta}-1)I'_{12}(-\beta,-\alpha)\nonumber \\
  = &
  \frac{{\widehat {w}(0)}}{\theta_1}
 \sum\limits_{\ell  = 2}^L  \frac{{(-1)^\ell}}{{\ell !}} \frac{{{d^2}}}{{dxdy}} 
 \bigg[  y_1^{-\beta x}y_2^{-\alpha y}
     \int_0^1\int_0^1 T^{ - v(\alpha  + \beta )} {{(1 - u)}^{\ell}}{P_1}\left( {x + 1 - (1 - u)\frac{{{\theta _2}}}{{{\theta _1}}}} \right){P_\ell }(y + u)dudv {\bigg|_{x = y = 0}} \bigg] \nonumber \\ 
	 &+ O(T/L). 
  \end{align*} 
By using similar arguments for the holomorphy of the error terms as in the Section 3.1, we end the proof of Lemma \ref{lemmac12}.
\subsection{Proof of Lemma \ref{lemmac22}}
This is the hardest case. Once again, we insert the definitions of the Feng mollifiers $\psi_2$ in the mean value integral $I_{22}(\alpha,\beta)$ so that
\begin{align}
  {I_{22}}(\alpha ,\beta ) &= \int_{ - \infty }^\infty  {w(t)\zeta (\tfrac{1}{2} + \alpha  + it)\zeta (\tfrac{1}{2} + \beta  - it){\psi _F}\overline {{\psi _F}} ({\sigma _0} + it)dt}  \nonumber \\
   &= \int_{ - \infty }^\infty  {w(t)\zeta (\tfrac{1}{2} + \alpha  + it)\zeta (\tfrac{1}{2} + \beta  - it)} \sum\limits_{{h_1} \leqslant {y_2}} {\frac{{\mu ({h_1})}}{{h_1^{1/2 + it}}}} \nonumber \\
	 &\quad \times \sum\limits_{{\ell _1} = 2}^{{K}} {\sum\limits_{{p_1}{p_2} \cdots {p_{{\ell _1}}}|{h_1}} {\frac{{\log {p_1}\log {p_2} \cdots \log {p_{{\ell_1}}}}}{{{{\log }^{{\ell _1}}}{y_2}}}} } {P_{{\ell _1}}}[h_1] \nonumber \\
   &\quad\times \sum\limits_{{h_2} \leqslant {y_2}} {\frac{{\mu ({h_2})}}{{h_2^{1/2 - it}}}} \sum\limits_{{\ell _2} = 2}^{{K}} {\sum\limits_{{q_1}{q_2} \cdots {q_{\ell_2}}|{h_2}} {\frac{{\log {q_1}\log {q_2} \cdots \log {q_{{\ell _2}}}}}{{{{\log }^{{\ell _2}}}{y_2}}}} } {P_{{\ell _2}}}[h_2]dt \nonumber \\
   &= \sum\limits_{{h_1},{h_2} \leqslant {y_2}} {\frac{{\mu ({h_1})\mu ({h_2})}}{{\sqrt {{h_1}{h_2}} }}} \nonumber \\
   &\quad \times \sum\limits_{{\ell _1} = 2}^{{K}} {\sum\limits_{{\ell _2} = 2}^{{K}} {\sum\limits_{{p_1}{p_2} \cdots {p_{{\ell _1}}}|{h_1}} {\sum\limits_{{q_1}{q_2} \cdots {q_{\ell_2}}|{h_2}} {\frac{{\log {p_1}\log {p_2} \cdots \log {p_{{\ell _1}}}\log {q_1}\log {q_2} \cdots \log {q_{{\ell _2}}}}}{{{{\log }^{{\ell _1} + {\ell _2}}}{y_2}}}} } } }  \nonumber \\
   &\quad \times P_{\ell_1}[h_1]P_{\ell_2}[h_2]\int_{ - \infty }^\infty  {w(t)\zeta (\tfrac{1}{2} + \alpha  + it)\zeta (\tfrac{1}{2} + \beta  - it){{\left( {\frac{{{h_1}}}{{{h_2}}}} \right)}^{ - it}}dt} . \nonumber 
\end{align}
We already explained in the computation of $I_{12}(\alpha,\beta)$ how to deal with this integral, namely write $I_{22}(\alpha,\beta)=I'_{22}(\alpha,\beta)+I''_{22}(\alpha,\beta)$, where $I''_{22}(\alpha,\beta)$ can be obtained from $I'_{22}$ by switching $\alpha$ and $-\beta$ and multiplying by
\[
\bigg( \frac{t}{2\pi} \bigg)^{-\alpha-\beta} = T^{-\alpha - \beta} + O(L^{-1}).
\]
We now use the Mellin integral representations of the polynomials
\[{P_{{\ell _1}}}[{h_1}] = \sum\limits_i {\frac{{{b_{i,{\ell _1}}}}}{{{{\log }^i}{y_2}}}} {(\log ({y_2}/{h_1}))^i} 
= 
\sum\limits_i {\frac{{{b_{i,{\ell _1}}}i!}}{{{{\log }^i}{y_2}}}} \frac{1}{{2\pi i}}\int_{(1)} {{{\left( {\frac{{{y_2}}}{{{h_1}}}} \right)}^{{u}}}\frac{{d{{u}}}}{{{u^{i + 1}}}}}, \]
and
\[{P_{{\ell _2}}}[{h_2}] = \sum\limits_j {\frac{{{b_{j,{\ell _2}}}}}{{{{\log }^j}{y_2}}}} {(\log ({y_2}/{h_2}))^j} 
= \sum\limits_j {\frac{{{b_{j,{\ell _2}}}j!}}{{{{\log }^j}{y_2}}}} \frac{1}{{2\pi i}}\int_{(1)} {{{\left( {\frac{{{y_2}}}{{{h_2}}}} \right)}^{{s}}}\frac{{d{{s}}}}{{{{{s}}^{j + 1}}}}} .\]
This leaves us with
\begin{align}
  {I'_{22}}(\alpha ,\beta ) &= \int_{ - \infty }^\infty  {w(t)} \sum\limits_{{\ell _1} = 2}^{{K}} {\sum\limits_{{\ell _2} = 2}^{{K}} {\sum\limits_{i,j} {\frac{{{b_{i,{\ell _1}}}i!}}{{{{\log }^{i + j}}{y_2}}}\frac{{{b_{j,{\ell _2}}}j!}}{{{{\log }^{{\ell _1} + {\ell _2}}}{y_2}}}} } }  \nonumber \\
   &\quad \times {\left( {\frac{1}{{2\pi i}}} \right)^3}\int_{(1)} \int_{(1)} \int_{(1)} y_2^{s + u} g_{\alpha,\beta}(z,t)\frac{{G(z)}}{z}   
   \sum\limits_{m{h_1} = n{h_2}} {\frac{{\mu ({h_1})\mu ({h_2})}}{{{h_1^{1/2  +{u} }}{h_2^{1/2 +{s} }}{m^{1/2 + \alpha + z}}{n^{1/2 + \beta + z}}}}}  \nonumber \\
   &\quad \times \sum\limits_{{p_1}{p_2} \cdots {p_{{\ell _1}}}|{h_1}} {\sum\limits_{{q_1}{q_2} \cdots {q_{\ell 2}}|{h_2}} {\log {p_1}\log {p_2} \cdots \log {p_{{\ell _1}}}\log {q_1}\log {q_2} \cdots \log {q_{{\ell _1}}}} } dz\frac{{du}}{{{u^{j + 1}}}}\frac{{ds}}{{{u^{i + 1}}}}dt. \nonumber 
\end{align}
We now have to compute the arithmetical sum $\sum_{mh_1=nh_2}$ with $p$-adic analysis. The first step is to consolidate the two sums over primes into a single sum. This is accomplished by the use of Lemma \ref{lemmacombinatorics}. Let us define
\begin{align}
  S_{{\ell _1},{\ell _2},k} &= \sum\limits_{m{h_1} = n{h_2}} {\frac{{\mu ({h_1})\mu ({h_2})}}{{h_1^{1/2 + u}h_2^{1/2 + s}{m^{1/2 + \alpha  + z}}{n^{1/2 + \beta  + z}}}}}  \nonumber \\
   &\quad \times \sum_{\substack{{p_1} \cdots {p_k}{q_1} \cdots {q_{{\ell _1} - k}}{r_1} \cdots {r_{{\ell _2} - k}}|{h_1}{h_2}/\gcd ({h_1},{h_2}) \\ {p_1} \cdots {p_k}|\gcd ({h_1},{h_2}) \\ {q_1} \cdots {q_{{\ell _1} - k}}|{h_1} \\ {r_1} \cdots {r_{{\ell _2} - k}}|{h_2}}} {{{\log }^2}{p_1} \cdots {{\log }^2}{p_k}\log {q_1} \cdots \log {q_{{\ell _1} - k}} \cdots \log {r_1} \cdots \log {r_{{\ell _2} - k}}} .  
\end{align}
The next step is to swap the order of the sums so that
\begin{align}
  {S_{{\ell _1},{\ell _2},{{k}}}} &= {( - 1)^{{\ell _1} + {\ell _2}}}
  \sum_{\substack{{p_i} \ne {p_j} \\ {q_i} \ne {q_j} \\ {r_i} \ne {r_j} \\ {p_i} \ne {q_i} \ne {r_i}}} {\frac{{{{\log }^2}{p_1} \cdots {{\log }^2}{p_k}\log {q_1} \cdots \log {q_{{\ell _1} - k}}\log {r_1} \cdots \log {r_{{\ell _2} - k}}}}{{{{({p_1} \cdots {p_k}{q_1} \cdots {q_{{\ell _1} - k}})}^{1/2 + {{u}}}}{{({p_1} \cdots {p_k}{r_1} \cdots {r_{{\ell _2} - k}})}^{1/2 + {{s}}}}}}}  \nonumber \\
   &\quad \times \sum_{\substack {m{{\tilde h}_1}{p_1} \cdots {p_k}{q_1} \cdots {q_{{\ell _1} - k}} = n \\ {{\tilde h}_2}{p_1} \cdots {p_k}{r_1} \cdots {r_{{\ell _2} - k}},({{\tilde h}_1},{p_1} \cdots {p_k}{q_1} \cdots {q_{{\ell _1} - k}}) = 1 \\ ({{\tilde h}_2},{p_1} \cdots {p_k}{r_1} \cdots {r_{{\ell _2} - k}}) = 1}} {\frac{{\mu ({{\tilde h}_1})\mu ({{\tilde h}_2})}}{{{{({{\tilde h}_1})}^{1/2 + {u}}}{{({{\tilde h}_2})}^{1/2 + {s}}}{m^{1/2 + \alpha  + z}}{n^{1/2 + \beta  + z}}}}},  \nonumber
\end{align}
by making the changes
\begin{align}
  {h_1} &= {{\tilde h}_1}{p_1} \cdots {p_k}{q_1} \cdots {q_{{\ell _1} - k}}, \nonumber \\
  {h_2} &= {{\tilde h}_2}{p_1} \cdots {p_k}{r_1} \cdots {r_{{\ell _2} - k}}, \nonumber  
\end{align} 
implying that
\begin{align}
  ({{\tilde h}_1},{p_1} \cdots {p_k}{q_1} \cdots {q_{{\ell _1} - k}}) &= 1, \nonumber \\
  ({{\tilde h}_2},{p_1} \cdots {p_k}{r_1} \cdots {r_{{\ell _2} - k}}) &= 1, \nonumber \\
  ({q_1} \cdots {q_{{\ell _1} - k}},{r_1} \cdots {r_{{\ell _2} - k}}) &= 1, \nonumber  
\end{align}
so that
\begin{align}
  {S_{{\ell _1},{\ell _2}, {{k}}}} &= {( - 1)^{{\ell _1} + {\ell _2}}}
  \sum_{\substack{{p_i} \ne {p_j} \\ {q_i} \ne {q_j} \\ {r_i} \ne {r_j} \\ {p_i} \ne {q_i} \ne {r_i}}} 
  {\frac{{{{\log }^2}{p_1} \cdots {{\log }^2}{p_k}\log {q_1} \cdots \log {q_{{\ell _1} - k}}\log {r_1} \cdots \log {r_{{\ell _2} - k}}}}{{{{({p_1} \cdots {p_k}{q_1} \cdots {q_{{\ell _1} - k}})}^{1/2 + {{u}}}}{{({p_1} \cdots {p_k}{r_1} \cdots {r_{{\ell _2} - k}})}^{1/2 + {{s}}}}}}}  \nonumber \\
   &\quad \times \sum_{\substack {m{{\tilde h}_1}{p_1} \cdots {p_k}{q_1} \cdots {q_{{\ell _1} - k}} = n \\ {{\tilde h}_2}{p_1} \cdots {p_k}{r_1} \cdots {r_{{\ell _2} - k}},({{\tilde h}_1},{p_1} \cdots {p_k}{q_1} \cdots {q_{{\ell _1} - k}}) = 1 \\ ({{\tilde h}_2},{p_1} \cdots {p_k}{r_1} \cdots {r_{{\ell _2} - k}}) = 1}} {\frac{{\mu ({{\tilde h}_1})\mu ({{\tilde h}_2})}}{{{{({{\tilde h}_1})}^{1/2 + {{u}}}}{{({{\tilde h}_2})}^{1/2 + {{s}}}}{m^{1/2 + \alpha  + z}}{n^{1/2 + \beta  + z}}}}}  .\nonumber  
\end{align}
Here the $p$'s, the $q$'s and the $r$'s are all distinct primes. Let us define the inner sum to be $\tilde{S}_{\ell_1,\ell_2,k} $ and let us recall that $\nu_p(n)=n'$ is the number of times the prime $p$ appears in $n$ so that
\begin{align}
  {{\tilde S}_{{\ell _1},{\ell _2},{{k}}}} 
  &= 
  \sum_{\substack {m{{\tilde h}_1}{p_1} \cdots {p_k}{q_1} \cdots {q_{{\ell _1} - k}} = n \\ {{\tilde h}_2}{p_1} \cdots {p_k}{r_1} \cdots {r_{{\ell _2} - k}},({{\tilde h}_1},{p_1} \cdots {p_k}{q_1} \cdots {q_{{\ell _1} - k}}) = 1 \\ ({{\tilde h}_2},{p_1} \cdots {p_k}{r_1} \cdots {r_{{\ell _2} - k}}) = 1}} {\frac{{\mu ({{\tilde h}_1})\mu ({{\tilde h}_2})}}{{{{({{\tilde h}_1})}^{1/2 + {{u}}}}{{({{\tilde h}_2})}^{1/2 + {{s}}}}{m^{1/2 + \alpha  + z}}{n^{1/2 + \beta  + z}}}}}  \nonumber \\
   &= 
   \prod\limits_{p \in \{ {p_1}, \cdots ,{p_k}\} } {\sum\limits_{n' = m'} {\frac{1}{{{{({p^{m'}})}^{1/2 + \alpha  + z}}{{({p^{n'}})}^{1/2 + \beta  + z}}}}} }  \nonumber \\
   &\quad \times \prod\limits_{q \in \{ {q_1}, \cdots ,{q_{{\ell _1} - k}}\} } {\sum\limits_{1+m' = n' + {{\tilde h}_2}'} {\frac{{\mu ({q^{{{\tilde h}_2}'}})}}{{{{({q^{{{\tilde h}_2}'}})}^{1/2 + {{s}}}}{{({q^{m'}})}^{1/2 + \alpha  + z}}{{({q^{n'}})}^{1/2 + \beta  + z}}}}} }  \nonumber \\
   &\quad \times \prod\limits_{r \in \{ {r_1}, \cdots ,{r_{{\ell _2} - k}}\} } {\sum\limits_{{{\tilde h}_1}' + m' = n'+1} {\frac{{\mu ({r^{{{\tilde h}_1}'}})}}{{{{({r^{{{\tilde h}_1}'}})}^{1/2 + {{u}}}}{{({r^{m'}})}^{1/2 + \alpha  + z}}{{({r^{n'}})}^{1/2 + \beta  + z}}}}} }  \nonumber \\
   &\quad \times \prod\limits_{p \notin \{ {p_1}, \cdots ,{p_k}\}  \cup \{ {q_1}, \cdots ,{q_{{\ell _1} - k}}\}  \cup \{ {r_1}, \cdots ,{r_{{\ell _2} - k}}\} } \nonumber \\
	 &\quad \times \sum\limits_{{{\tilde h}_1}' + m' = n' + {{\tilde h}_2}'} {\frac{{\mu ({p^{{{\tilde h}_1}'}})\mu ({p^{{{\tilde h}_2}'}})}}{{{{({p^{{{\tilde h}_1}'}})}^{1/2 + {{u}}}}{{({p^{{{\tilde h}_2}'}})}^{1/2 + {{s}}}}{{({p^{m'}})}^{1/2 + \alpha  + z}}{{({p^{n'}})}^{1/2 + \beta  + z}}}}}   \nonumber \\
   &= \frac{{{\Pi _1}(\alpha ,\beta ,s,u,z){\Pi _2}(\alpha ,\beta ,s,u,z){\Pi _3}(\alpha ,\beta ,s,u,z){\Pi _4}(\alpha ,\beta ,s,u,z)}}{{{\Pi _5}(\alpha ,\beta ,s,u,z)}}. \nonumber  
\end{align}
Each product is evaluated to
\begin{align}
  {\Pi _1}(\alpha ,\beta ,s,u,z) &= \prod\limits_p {\sum\limits_{{{\tilde h}_1}' + m' = n' + {{\tilde h}_2}'} {\frac{{\mu ({p^{{{\tilde h}_1}'}})\mu ({p^{{{\tilde h}_2}'}})}}{{{{({p^{{{\tilde h}_1}'}})}^{1/2 + {{u}}}}{{({p^{{{\tilde h}_2}'}})}^{1/2 + {{s}}}}{{({p^{m'}})}^{1/2 + \alpha  + z}}{{({p^{n'}})}^{1/2 + \beta  + z}}}}} }  \nonumber \\
   &= \prod\limits_p {\left( {1 + \frac{1}{{{p^{1 + s + u}}}} - \frac{1}{{{p^{1 + s + \alpha  + z}}}} - \frac{1}{{{p^{1 + u + \beta  + z}}}} + \frac{1}{{{p^{1 + \alpha  + \beta  + 2z}}}} + O({p^{ - 2 + \varepsilon }})} \right)},  \nonumber 
\end{align}
then
\begin{align}
{\Pi _2}(\alpha ,\beta ,s,u,z) &= \prod\limits_{p \in \{ {p_1}, \cdots ,{p_k}\} } {\sum\limits_{n' = m'} {\frac{1}{{{{({p^{m'}})}^{1/2 + \alpha  + z}}{{({p^{n'}})}^{1/2 + \beta  + z}}}}} } \nonumber \\
 &= \prod\limits_{p \in \{ {p_1}, \cdots ,{p_k}\} } {\left( {1 + \frac{1}{{{p^{1 + \alpha  + \beta  + 2z}}}} + O({p^{ - 2 + \varepsilon }})} \right)}, \end{align}
followed by
\begin{align}
  {\Pi _3}(\alpha ,\beta ,s,u,z) &= \prod\limits_{q \in \{ {q_1}, \cdots ,{q_{{\ell _1} - k}}\} } {\sum\limits_{m' + 1 = n' + {{\tilde h}_2}'} {\frac{{\mu ({q^{{{\tilde h}_2}'}})}}{{{{({q^{{{\tilde h}_2}'}})}^{1/2 + {{s}}}}{{({q^{m'}})}^{1/2 + \alpha  + z}}{{({q^{n'}})}^{1/2 + \beta  + z}}}}} }  \nonumber \\
   &= \prod\limits_{q \in \{ {q_1}, \cdots ,{q_{{\ell _1} - k}}\} } {\left( {- \frac{1}{{{q^{1/2+{{s}}}}}} + \frac{1}{{{q^{1/2 + \beta  + z }}}} + O({q^{ - 2 + \varepsilon }})} \right)},  \nonumber  
\end{align}
as well as
\begin{align}
  {\Pi _4}(\alpha ,\beta ,s,u,z) &= \prod\limits_{r \in \{ {r_1}, \cdots ,{r_{{\ell _2} - k}}\} } {\sum\limits_{{{\tilde h}_1}' + m' = n'+1} {\frac{{\mu ({r^{{{\tilde h}_1}'}})}}{{{{({r^{{{\tilde h}_1}'}})}^{1/2 + {{u}}}}{{({r^{m'}})}^{1/2 + \alpha  + z}}{{({r^{n'}})}^{1/2 + \beta  + z}}}}} }  \nonumber \\
   &= \prod\limits_{r \in \{ {r_1}, \cdots ,{r_{{\ell _2} - k}}\} } {\left( {- \frac{1}{{{r^{1/2+{{u}}}}}} + \frac{1}{{{r^{1/2 + \alpha  + z }}}} + O({r^{ - 2 + \varepsilon }})} \right)} , \nonumber 
\end{align}
and finally
\begin{align}
  {\Pi _5}(\alpha ,\beta ,s,u,z) &= \prod\limits_{p \in \{ {p_1}, \cdots ,{p_k}\}  \cup \{ {q_1}, \cdots ,{q_{{\ell _1} - k}}\}  \cup \{ {r_1}, \cdots ,{r_{{\ell _2} - k}}\} } \nonumber \\
	 &\quad \times \sum\limits_{{{\tilde h}_1}' + m' = n' + {{\tilde h}_2}'} {\frac{{\mu ({p^{{{\tilde h}_1}'}})\mu ({p^{{{\tilde h}_2}'}})}}{{{{({p^{{{\tilde h}_1}'}})}^{1/2 + {{u}}}}{{({p^{{{\tilde h}_2}'}})}^{1/2 + {{s}}}}{{({p^{m'}})}^{1/2 + \alpha  + z}}{{({p^{n'}})}^{1/2 + \beta  + z}}}}}   \nonumber \\
   &= \prod\limits_{p \in \{ {p_1}, \cdots ,{p_k}\}  \cup \{ {q_1}, \cdots ,{q_{{\ell _1} - k}}\}  \cup \{ {r_1}, \cdots ,{r_{{\ell _2} - k}}\} } \nonumber \\
	 &\quad \times \left( {1 + \frac{1}{{{p^{1 + s + u}}}} - \frac{1}{{{p^{1 + s + \alpha  + z}}}} - \frac{1}{{{p^{1 + u + \beta  + z}}}} + \frac{1}{{{p^{1 + \alpha  + \beta  + 2z}}}} + O({p^{ - 2 + \varepsilon }})} \right) . \nonumber  
\end{align}
This leaves us with
\begin{align}
  {{\tilde S}_{{\ell _1},{\ell _2},{{k}}}} &= \prod\limits_p {\left( {1 + \frac{1}{{{p^{1 + s + u}}}} - \frac{1}{{{p^{1 + s + \alpha  + z}}}} - \frac{1}{{{p^{1 + u + \beta  + z}}}} + \frac{1}{{{p^{1 + \alpha  + \beta  + 2z}}}} + O({p^{ - 2 + \varepsilon }})} \right)}  \nonumber \\
   &= \frac{{\zeta (1 + s + u)\zeta (1 + \alpha  + \beta  + 2z)}}{{\zeta (1 + s + \alpha  + z)\zeta (1 + u + \beta  + z)}}{A_{\alpha ,\beta }}(s,u,z), \nonumber 
\end{align}
where $A$ is an arithmetical factor that is given by an absolutely convergent Euler product in some product of half-planes containing the origin. From our previous analysis of the $I_{12}(\alpha,\beta)$ case, we know that $A_{0,0}(z,z,z)=1$ for all values of $z$. Therefore we end up with
\begin{align}
  S_{{\ell _1},{\ell _2},k} &= \frac{{\zeta (1 + s + u)\zeta (1 + \alpha  + \beta  + 2z)}}{{\zeta (1 + s + \alpha  + z)\zeta (1 + u + \beta  + z)}}{A_{\alpha ,\beta }}(s,u,z) \nonumber \\
   &\quad \times {( - 1)^{{\ell _1} + {\ell _2}}}\sum_{\substack {{p_i} \ne {p_j} \\ {q_i} \ne {q_j} \\ {r_i} \ne {r_j} \\ {p_i} \ne {q_i} \ne {r_i}}} {{{\log }^2}{p_1} \cdots {{\log }^2}{p_k}\log {q_1} \cdots \log {q_{{\ell _1} - k}}\log {r_1} \cdots \log {r_{{\ell _2} - k}}}  \nonumber \\
   &\quad \times \prod\limits_{p \in \{ {p_1}, \cdots ,{p_k}\} } {\frac{{{E_1}(p) + O({p^{ - 2 + \varepsilon }})}}{{1 + \tfrac{1}{{{p^{1 + s + \alpha  + z}}}} + \tfrac{1}{{{p^{1 + u + \beta  + z}}}} - \tfrac{1}{{{p^{1 + \alpha  + \beta  + 2z}}}} + {E_1}(p) + O({p^{ - 2 + \varepsilon }})}}}  \nonumber \\
   &\quad \times \prod\limits_{q \in \{ {q_1}, \cdots ,{q_{{\ell _1} - k}}\} } {\frac{{{E_2}(q) + O({q^{ - 2 + \varepsilon }})}}{{1 + \tfrac{1}{{{q^{1 + s + \alpha  + z}}}} - \tfrac{1}{{{q^{1 + \alpha  + \beta  + 2z}}}} -E_2(q) + O({q^{ - 2 + \varepsilon }})}}}  \nonumber \\
   &\quad \times \prod\limits_{r \in \{ {r_1}, \cdots ,{r_{{\ell _2} - k}}\} } {\frac{{{E_3}(r) + O({r^{ - 2 + \varepsilon }})}}{{1 + \tfrac{1}{{{r^{1 + u + \beta  + z}}}} - \tfrac{1}{{{r^{1 + \alpha  + \beta  + 2z}}}} -E_3(r) + O({r^{ - 2 + \varepsilon }})}}},  \nonumber 
\end{align}
where
\[{E_1}(p) = \frac{1}{{{p^{1 + s + u}}}},\]
and
\[{E_2}(q) = \frac{1}{{{q^{1/2 + u}}}}\left( {\frac{1}{{{q^{1/2 + s}}}} - \frac{1}{{{q^{1/2 + \beta  + z}}}}} \right) = \frac{1}{{{q^{1 + s + u}}}} - \frac{1}{{{q^{1 + \beta  + u + z}}}},\]
and finally
\[{E_3}(r) = \frac{1}{{{r^{1/2 + s}}}}\left( {\frac{1}{{{q^{1/2 + u}}}} - \frac{1}{{{q^{1/2 + \alpha  + z}}}}} \right) = \frac{1}{{{r^{1 + s + u}}}} - \frac{1}{{{r^{1 + \alpha  + s + z}}}}.\]
We define $H_{\ell_1,\ell_2,k}$ to be the last part of $S_{\ell_1,\ell_2,k}$. This means that
\begin{align}
  H_{\ell_1, \ell_2, k} &= {( - 1)^{{\ell _1} + {\ell _2}}}\sum_{\substack{{p_i} \ne {p_j} \\ {q_i} \ne {q_j} \\ {r_i} \ne {r_j} \\ {p_i} \ne {q_i} \ne {r_i}}}  ({E_1}(p) + O({p^{ - 2 + \varepsilon }})){\log ^2}p \nonumber \\
	 &\quad \times \left( {1 - {E_1}(p) - \frac{1}{{{p^{1 + s + \alpha  + z}}}} - \frac{1}{{{p^{1 + u + \beta  + z}}}} + \frac{1}{{{p^{1 + \alpha  + \beta  + 2z}}}} + O({p^{ - 2 + \varepsilon }})} \right) \nonumber \\
   &\quad \times ({E_2}(q) + O({q^{ - 2 + \varepsilon }}))\log q\left( {1 + {E_2}(q) - \frac{1}{{{q^{1 + s + \alpha  + z}}}} + \frac{1}{{{q^{1 + \alpha  + \beta  + 2z}}}} + O({q^{ - 2 + \varepsilon }})} \right) \nonumber \\
   &\quad \times ({E_3}(r) + O({r^{ - 2 + \varepsilon }}))\log r\left( {1 + {E_3}(r) - \frac{1}{{{r^{1 + u + \beta  + z}}}} + \frac{1}{{{r^{1 + \alpha  + \beta  + 2z}}}} + O({r^{ - 2 + \varepsilon }})} \right) \nonumber \\
   &= {( - 1)^{{\ell _1} + {\ell _2}}}\sum_{\substack{{p_i} \ne {p_j} \\ {q_i} \ne {q_j} \\ {r_i} \ne {r_j} \\ {p_i} \ne {q_i} \ne {r_i}}}   \prod\limits_{p \in \{ {p_1}, \cdots ,{p_k}\} } {\left( {{E_1}(p){{\log }^2}p + O\left( {\frac{{{{\log }^2}p}}{{{p^{2 - \varepsilon }}}}} \right)} \right)}  \nonumber \\
   &\quad \times \prod\limits_{q \in \{ {q_1}, \cdots ,{q_{{\ell _1} - k}}\} } {\left( {{E_2}(q)\log q + O\left( {\frac{{\log q}}{{{q^{2 - \varepsilon }}}}} \right)} \right)}  \prod\limits_{r \in \{ {r_1}, \cdots ,{r_{{\ell _2} - k}}\} } {\left( {{E_3}(r)\log r + O\left( {\frac{{\log r}}{{{r^{2 - \varepsilon }}}}} \right)} \right)}  \nonumber \\
  & + O(f(p^{-2+\varepsilon},q^{-2+\varepsilon},r^{-2+\varepsilon})) ,\nonumber
   \end{align}
for some polynomial $f$. Applying the inclusion-exclusion principle we then have
\begin{align}
  {H_{{\ell _1},{\ell _2},{{k}}}} &= {( - 1)^{{\ell _1} + {\ell _2}}}{\left( {\sum\limits_p {{E_1}(p){{\log }^2}p}  + O\left( {\frac{{{{\log }^2}p}}{{{p^{2 - \varepsilon }}}}} \right)} \right)^k} \nonumber \\
   &\quad \times {\left( {\sum\limits_q {{E_2}(q)\log q}  + O\left( {\frac{{\log q}}{{{q^{2 - \varepsilon }}}}} \right)} \right)^{{\ell _1} - k}}{\left( {\sum\limits_r {{E_3}(r)\log r}  + O\left( {\frac{{\log r}}{{{r^{2 - \varepsilon }}}}} \right)} \right)^{{\ell _2} - k}} \nonumber + \sum\limits_{p,q,r} {B(p,q,r)} , \nonumber 
\end{align}
where
\[
B(p,q,r) \ll _{\alpha ,\beta ,s,u,z,\varepsilon } f\bigg(\frac{1}{{{p^{2 - \varepsilon }}}},\frac{1}{q^{2-\varepsilon}},\frac{1}{r^{2-\varepsilon}}\bigg).
\]
As in the previous crossterm, we now need to identify the logarithms of the primes with the signature of the von Mangoldt functions $\Lambda(n)$ and $\Lambda_2(n)$. With this in mind, we first write
\[\frac{{\zeta '}}{\zeta }(s) =  - \sum\limits_{n = 1}^\infty  {\Lambda (n){n^{ - s}}}  =  - \sum\limits_p {\frac{{\log p}}{{{p^s}}}{{\left( {1 - \frac{1}{{{p^s}}}} \right)}^{ - 1}}}  
=  - \sum\limits_p {\frac{{\log p}}{p^s}}  + O\left( {\frac{{\log p}}{{{p^{2s}}}}} \right),\]
and
\[\frac{{\zeta ''}}{\zeta }(s) = \sum\limits_{n = 1}^\infty  {{\Lambda _2}(n){n^{ - s}}}  =  \sum\limits_p {\frac{{{{\log }^2}p}}{{{p^s}}}{{\left( {1 - \frac{1}{{{p^s}}}} \right)}^{ - 1}}}  
= \sum\limits_p {\frac{{{{\log }^2}p}}{p^s}}  + O\left( {\frac{{{{\log }^2}p}}{{{p^{2s}}}}} \right),\]
for $\real(s)>1$. This means that
\begin{align}
  {H_{{\ell _1},{\ell _2},{{k}}}} &= {( - 1)^{{\ell _1} + {\ell _2}}}{\left( {\frac{{\zeta ''}}{\zeta }(1 + s + u)} \right)^k}{\left( { - \frac{{\zeta '}}{\zeta }(1 + s + u) + \frac{{\zeta '}}{\zeta }(1 + \beta  + u + z)} \right)^{{\ell _1} - k}} \nonumber \\
   &\quad \times {\left( { - \frac{{\zeta '}}{\zeta }(1 + s + u) + \frac{{\zeta '}}{\zeta }(1 + \alpha  + s + z)} \right)^{{\ell _2} - k}} + D(\alpha ,\beta ,s,u,z) \nonumber \\
   &= {( - 1)^{{\ell _1} + {\ell _2}}} {( - {V_1})^k}{( - {V_2})^{{\ell _1} - k}}{( - {V_3})^{{\ell _2} - k}} \nonumber \\
	 & \quad + \sum\limits_{l = 0}^{k - 1} {V_1^l{A_l}(\alpha ,\beta ,s,u,z)} \sum\limits_{m = 0}^{{\ell _1} - k - 1} {V_2^m{B_m}(\alpha ,\beta ,s,u,z)} \sum\limits_{n = 0}^{{\ell _2} - k - 1} {V_3^n{C_n}(\alpha ,\beta ,s,u,z)}, \nonumber 
\end{align}
where $D(\alpha ,\beta ,s,u,z)$ are terms of smaller order and where
\[{V_1} =  - \frac{{\zeta ''}}{\zeta }(1 + s + u),\quad {V_2} = \frac{{\zeta '}}{\zeta }(1 + s + u) - \frac{{\zeta '}}{\zeta }(1 + \beta  + u + z),\quad {V_3} = \frac{{\zeta '}}{\zeta }(1 + s + u) - \frac{{\zeta '}}{\zeta }(1 + \alpha  + s + z).\]
Moreover, we also have that
\[
{A_l}(\alpha ,\beta ,s,u,z) \ll _{\alpha ,\beta ,s,u,z,\varepsilon } \sum\limits_p {\frac{{{{\log }^2}p}}{{{p^{2 - \varepsilon }}}}} ,\quad {B_m}(\alpha ,\beta ,s,u,z),{C_n}(\alpha ,\beta ,s,u,z) \ll _{\alpha ,\beta ,s,u,z,\varepsilon } \sum\limits_p {\frac{{\log p}}{{{p^{2 - \varepsilon }}}}} .\]
All of these terms are analytic in a larger region of the complex plane, thus we are only interested in the term ${( - {V_1})^k}{( - {V_2})^{{\ell _1} - k}}{( - {V_3})^{{\ell _2} - k}}$. Consequently, the end result of this computation is that
\begin{align}
  {I'_{22}}(\alpha ,\beta ) &= \int_{ - \infty }^\infty  w(t)\sum\limits_{{\ell _1} = 2}^K \sum\limits_{{\ell _2} = 2}^K \sum\limits_{i,j} \sum\limits_{k = 0}^{\min ({\ell _1},{\ell _2})} {\binom{\ell_1}{k}{{({\ell _2})}_k}}     \frac{{{b_{i,{\ell _1}}}i!}}{{{{\log }^{i + j}}{y_2}}}\frac{{{b_{j,{\ell _2}}}j!}}{{{{\log }^{{\ell _1} + {\ell _2}}}{y_2}}}  \nonumber \\
   &\quad \times {\left( {\frac{1}{{2\pi i}}} \right)^3}\int_{(1)} \int_{(1)} \int_{(1)}    y_2^{s+u} g_{\alpha,\beta}(z,t)\frac{{G(z)}}{z}\frac{{\zeta (1 + s + u)\zeta (1 + \alpha  + \beta  + 2z)}}{{\zeta (1 + s + \alpha  + z)\zeta (1 + u + \beta  + z)}}{A_{\alpha ,\beta }}(s,u,z) \nonumber \\
   &\quad \times {( - 1)^{{\ell _1} + {\ell _2}}}{\left( {\frac{{\zeta ''}}{\zeta }(1 + s + u)} \right)^k}{\left( { - \frac{{\zeta '}}{\zeta }(1 + s + u) + \frac{{\zeta '}}{\zeta }(1 + \beta  + u + z)} \right)^{{\ell _1} - k}} \nonumber \\
   &\quad \times {\left( { - \frac{{\zeta '}}{\zeta }(1 + s + u) + \frac{{\zeta '}}{\zeta }{{(1 + \alpha  + s + z)}}} \right)^{{\ell _2} - k}}dz\frac{{d{{u}}}}{{{u^{i + 1}}}}\frac{{d{{s}}}}{{{{{s}}^{j + 1}}}}dt. \nonumber  
\end{align}
As in the calculation of $I'_{12}$, we now take the $s,u,z$ contours of integration to $\delta>0$ small and fixed with $\delta < \varepsilon$, and then move $z$ to $-\delta + \varepsilon$, crossing a simple pole at $z=0$ only (since, yet again, $G(z)$ vanishes at the pole of $\zeta(1+\alpha+\beta+2z)$). 
The new line of integration with respect to $z$ contributes
\[
\ll T^{1+\varepsilon} \bigg( \frac{y_2^2}{T} \bigg)^{\delta} \ll T^{1-\varepsilon},
\]
since $\theta_2 = 1/2 - \varepsilon$. Write $I'_{22}(\alpha,\beta)=I'_{220}(\alpha,\beta) + O(T^{1-\varepsilon})$, where $I'_{220}(\alpha,\beta)$ corresponds to the residue at $z=0$, i.e.
\begin{align}
  I{'_{220}}(\alpha ,\beta ) &= \int_{ - \infty }^\infty  {w(t)\sum\limits_{{\ell _1} = 2}^K {\sum\limits_{{\ell _2} = 2}^K {\sum\limits_{i,j} {\sum\limits_{k = 0}^{\min ({\ell _1},{\ell _2})} {{{\binom{\ell _1}{k}}}{{({\ell _2})}_k}} } } } } \frac{{{b_{i,{\ell _1}}}i!}}{{{{\log }^{i + j}}{y_2}}}\frac{{{b_{j,{\ell _2}}}j!}}{{{{\log }^{{\ell _1} + {\ell _2}}}{y_2}}} \nonumber \\
   &\quad \times {\left( {\frac{1}{{2\pi i}}} \right)^2}\int_{(\delta )} {\int_{(\delta )} {\mathop {\operatorname{res} }\limits_{{{z}} = 0} }  \frac{{G(z)}}{z}} g_{\alpha,\beta}(z,t) y_2^{s+u}\frac{{\zeta (1 + s + u)\zeta (1 + \alpha  + \beta  + 2z)}}{{\zeta (1 + s + \alpha  + z)\zeta (1 + u + \beta  + z)}}{A_{\alpha ,\beta }}(s,u,z) \nonumber \\
   &\quad \times {( - 1)^{{\ell _1} + {\ell _2}}}{\left( {\frac{{\zeta ''}}{\zeta }(1 + s + u)} \right)^k}{\left( { - \frac{{\zeta '}}{\zeta }(1 + s + u) + \frac{{\zeta '}}{\zeta }(1 + \beta  + u + z)} \right)^{{\ell _1} - k}} \nonumber \\
   &\quad \times {\left( { - \frac{{\zeta '}}{\zeta }(1 + s + u) + \frac{{\zeta '}}{\zeta }{{(1 + \alpha  + s + z)}}} \right)^{{\ell _2} - k}}\frac{{d{{u}}}}{{{u^{i + 1}}}}\frac{{d{{s}}}}{{{{{s}}^{j + 1}}}}dt \nonumber \\
   &= \widehat {w}(0)\zeta (1 + \alpha  + \beta )\sum\limits_{{\ell _1} = 2}^K {\sum\limits_{{\ell _2} = 2}^K {\sum\limits_{i,j} {\sum\limits_{k = 0}^{\min ({\ell _1},{\ell _2})} {\binom{\ell_1}{k}{{({\ell _2})}_k}} } } {( - 1)^{{\ell _1} + {\ell _2}}}} \frac{{{b_{i,{\ell _1}}}i!}}{{{{\log }^{i + j}}{y_2}}}\frac{{{b_{j,{\ell _2}}}j!}}{{{{\log }^{{\ell _1} + {\ell _2}}}{y_2}}}{J_{22}}, \nonumber 
\end{align}
where
\begin{align}
  {J_{22}} &= {\left( {\frac{1}{{2\pi i}}} \right)^2}\int_{(\delta )} {\int_{(\delta )} {y_2^{s+u}\frac{{\zeta (1 + s + u)}}{{\zeta (1 + s + \alpha )\zeta (1 + u + \beta )}}{A_{\alpha ,\beta }}(s,u,0)} }  \nonumber \\
   &\quad \times {\left( {\frac{{\zeta ''}}{\zeta }(1 + s + u)} \right)^k}{\left( { - \frac{{\zeta '}}{\zeta }(1 + s + u) + \frac{{\zeta '}}{\zeta }(1 + \beta  + u)} \right)^{{\ell _1} - k}} \nonumber \\
   &\quad \times {\left( { - \frac{{\zeta '}}{\zeta }(1 + s + u) + \frac{{\zeta '}}{\zeta }{{(1 + \alpha  + s)}}} \right)^{{\ell _2} - k}}\frac{{d{{u}}}}{{{u^{i + 1}}}}\frac{{d{{s}}}}{{{{{s}}^{j + 1}}}} .\nonumber 
\end{align}
The next step is to employ the binomial theorem in the part of the integrand that involves $\zeta$ functions. Calling this part $\mathcal{Z}$, we then have
\begin{align}
  \mathcal{Z}(s,u) &:= \frac{{\zeta (1 + s + u)}}{{\zeta (1 + s + \alpha )\zeta (1 + u + \beta )}}{\left( {\frac{{\zeta ''}}{\zeta }(1 + s + u)} \right)^k} \nonumber \\
   &\quad \times {\left( { - \frac{{\zeta '}}{\zeta }(1 + s + u) + \frac{{\zeta '}}{\zeta }(1 + \beta  + u)} \right)^{{\ell _1} - k}}{\left( { - \frac{{\zeta '}}{\zeta }(1 + s + u) + \frac{{\zeta '}}{\zeta }(1 + \alpha  + s)} \right)^{{\ell _2} - k}} \nonumber \\
   &= \frac{{\zeta (1 + s + u)}}{{\zeta (1 + s + \alpha )\zeta (1 + u + \beta )}}{\left( {\frac{{\zeta ''}}{\zeta }(1 + s + u)} \right)^k} \nonumber \\
   &\quad \times \sum\limits_{{r_1} = 0}^{{\ell _1} - k} {\binom{\ell _1-k}{r_1}{{\left( {\frac{{\zeta '}}{\zeta }(1 + \beta  + u)} \right)}^{{\ell _1} - k - {r_1}}}{{\left( { - \frac{{\zeta '}}{\zeta }(1 + s + u)} \right)}^{{r_1}}}}  \nonumber \\
   &\quad \times \sum\limits_{{r_2} = 0}^{{\ell _2} - k} {\binom{\ell _2-k}{r_2}{{\left( {\frac{{\zeta '}}{\zeta }(1 + \alpha  + s)} \right)}^{{\ell _2} - k - {r_2}}}{{\left( { - \frac{{\zeta '}}{\zeta }(1 + s + u)} \right)}^{{r_2}}}}  \nonumber \\
   &= \sum\limits_{{r_1} = 0}^{{\ell _1} - k} {\sum\limits_{{r_2} = 0}^{{\ell _2} - k} {\binom{\ell _1-k}{r_1}\binom{\ell _2-k}{r_2}} } \sum\limits_{n = 1}^\infty  {\frac{{(\mathbf{1} * \Lambda _2^{ * k} * {\Lambda ^{ * {r_1} + {r_2}}})(n)}}{{{n^{1 + s + u}}}}}  \nonumber \\
   &\quad \times \frac{1}{{\zeta (1 + \beta + u )}}{\left( {\frac{{\zeta '}}{\zeta }(1 + \beta  + u)} \right)^{{\ell _1} - k - {r_1}}}\frac{1}{{\zeta (1 + s + \alpha )}}{\left( {\frac{{\zeta '}}{\zeta }(1 + \alpha  + s)} \right)^{{\ell _2} - k - {r_2}}}, \nonumber  
\end{align}
where we have used the Dirichlet convolution of
\[\zeta (s) = \sum\limits_{n = 1}^\infty  {\frac{1}{{{n^s}}}} ,\quad \frac{{\zeta '}}{\zeta }(s) =  - \sum\limits_{n = 1}^\infty  {\frac{{\Lambda (n)}}{{{n^s}}}} ,\quad \textnormal{and} \quad \frac{{\zeta ''}}{\zeta }(s) = \sum\limits_{n = 1}^\infty  {\frac{{{\Lambda _2}(n)}}{{{n^s}}}}, \]
for $\real(s)>1$. Now we take $\delta \asymp L^{-1}$ and bound the integral trivially to get $J_{22} \ll L^{i+j-1}$. This means that we can use a Taylor series expansion so that $A_{\alpha,\beta}(s,u,0)=A_{0,0}(0,0,0)+O(|s|+|u|)$ to write $J_{22}(\alpha,\beta) = J'_{22}(\alpha,\beta) +O(L^{i+j-2})$, say. We recall that earlier we proved that $A_{0,0}(z,z,z)=1$ for all $z$, and hence $A_{0,0}(0,0,0)=1$. This has the effect of separating the complex variables $s$ and $u$ as follows
\[{J'_{22}} = \sum\limits_{n \leqslant {y_2}} {\sum\limits_{{r_1} = 0}^{{\ell _1} - k} {\sum\limits_{{r_2} = 0}^{{\ell _2} - k} {\binom{\ell _1-k}{r_1}\binom{\ell _2-k}{r_2}} } } \frac{{(\mathbf{1} * \Lambda _2^{ * k} * {\Lambda ^{ * {r_1} + {r_2}}})(n)}}{n}{L_{22,1}}{L_{22,2}},\]
where
\begin{align} \label{defL22}
{L_{22,1}} = \frac{1}{{2\pi i}}\int_{(\delta )} {{{\left( {\frac{{{y_2}}}{n}} \right)}^s}\frac{1}{{\zeta (1 + s + \alpha )}}{{\left( {\frac{{\zeta '}}{\zeta }(1 + \alpha  + s)} \right)}^{{\ell _2} - k - {r_2} }}\frac{{ds}}{{{{{s}}^{j + 1}}}}}, 
\end{align}
and
\[{L_{22,2}} = \frac{1}{{2\pi i}}\int_{(\delta )} {{{\left( {\frac{{{y_2}}}{n}} \right)}^u}\frac{1}{{\zeta (1 + \beta  + u)}}{{\left( {\frac{{\zeta '}}{\zeta }(1 + \beta  + u)} \right)}^{{\ell _1} - k - {r_1} }}\frac{{du}}{{{u^{{{i}} + 1}}}}} .\]
These two integrals are identical, up to the symmetries in $s/u$, $\ell_1 / \ell_2$, $\alpha / \beta$ and $r_1 / r_2$ and they were in fact treated in the $I_{12}(\alpha,\beta)$ case. The end results for the main terms are 
\[{L_{22,1}} = \frac{1}{2 \pi i} \oint \bigg( \frac{y_2}{n} \bigg)^s (s + \alpha)^{1-\ell_2  + k + r_2} \frac{ds}{s^{j+1}} = \frac{{{{( - 1)}^{{\ell _2} - k + {r_2}}}}}{{{{j}}!}}\frac{{{d^{1 - {\ell _2} + k + {r_2}}}}}{{d{x^{1 - {\ell _2} + k + {r_2}}}}}{e^{\alpha x}}{\left( {x + \log \frac{{{y_2}}}{n}} \right)^{{j}}}{\bigg|_{x = 0}},\]
and
\[{L_{22,2}} = \frac{1}{2 \pi i} \oint \bigg( \frac{y_2}{n} \bigg)^u (u + \beta)^{1-\ell_1  + k + r_1} \frac{du}{u^{i+1}} = \frac{{{{( - 1)}^{{\ell _1} - k + {r_1}}}}}{{{{i}}!}}\frac{{{d^{1 - {\ell _1} + k + {r_1}}}}}{{d{y^{1 - {\ell _1} + k + {r_1}}}}}{e^{\beta y}}{\left( {y + \log \frac{{{y_2}}}{n}} \right)^{{i}}}{\bigg|_{y = 0}}.\]
Next, we insert these results into $J_{22}$ and we end up with
\begin{align}
  {J'_{22}} &= \frac{1}{{i!}}\frac{1}{{j!}}\sum\limits_{n \leqslant {y_2}} {\sum\limits_{{r_1} = 0}^{{\ell _1} - k} {\sum\limits_{{r_2} = 0}^{{\ell _2} - k} {{{( - 1)}^{{\ell _1} + {\ell _2} - 2k + {r_1} + {r_2}}}\binom{\ell_1 - k}{r_1}\binom{\ell_2 - k}{r_2}} } } \frac{{(\mathbf{1} * \Lambda _2^{ * k} * {\Lambda ^{ * {r_1} + {r_2}}})(n)}}{n} \nonumber \\
   &\quad \times \frac{{{d^{1 - {\ell _2} + k + {r_2}}}}}{{d{x^{1 - {\ell _2} + k +- {r_2}}}}}\frac{{{d^{1 - {\ell _1} + k + {r_1}}}}}{{d{y^{1 - {\ell _1} + k + {r_1}}}}}{e^{\alpha x + \beta y}}{\left( {x + \log \frac{{{y_2}}}{n}} \right)^{{j}}}{\bigg|_{x = 0}}{\left( {y + \log \frac{{{y_2}}}{n}} \right)^{{i}}}{\bigg|_{y = 0}} + O(L^{i+j-2}). \nonumber  
\end{align}
To make matters easier, we again employ the change of variables
\[
x \to \frac{x}{\log y_2} \quad \textnormal{and} \quad y \to \frac{y}{\log y_2},
\]
and this produces
\begin{align}
  J'_{22} &= \frac{{{{\log }^{i + j - 2}}{y_2}}}{{i!j!}}\sum\limits_{n \leqslant {y_2}} {\sum\limits_{{r_1} = 0}^{{\ell _1} - k} {\sum\limits_{{r_2} = 0}^{{\ell _2} - k} {{{( - 1)}^{{\ell _1} + {\ell _2} - 2k + {r_1} + {r_2}}}\binom{\ell_1 - k}{r_1}\binom{\ell_2 - k}{r_2}} } } \frac{{(\mathbf{1} * \Lambda _2^{ * k} * {\Lambda ^{ * {r_1} + {r_2}}})(n)}}{n} \nonumber \\
   &\quad \times \frac{{{d^{1 - {\ell _2} + k + {r_2}}}}}{{d{x^{1 - {\ell _2} + k + {r_2}}}}}\frac{{{d^{1 - {\ell _1} + k + {r_1}}}}}{{d{y^{1 - {\ell _1} + k + {r_1}}}}}y_2^{\alpha x + \beta y}{\left( {x + \frac{{\log ({y_2}/n)}}{{\log {y_2}}}} \right)^{{j}}}{\bigg|_{x = 0}}{\left( {y + \frac{{\log ({y_2}/n)}}{{\log {y_2}}}} \right)^{{i}}}{\bigg|_{y = 0}} + O(L^{i+j-2}). \nonumber  
\end{align}
We are now ready to insert this into $I'_{220}$ so that
\begin{align}
  I'_{220}(\alpha ,\beta ) &= \frac{{\widehat {w}(0)}}{{(\alpha  + \beta ){{\log }^2}{y_2}}}\frac{{{d^2}}}{{dxdy}} \bigg[y_2^{\alpha x + \beta y}\sum\limits_{{\ell _1} = 2}^K {\sum\limits_{{\ell _2} = 2}^K {\frac{1}{{{{\log }^{{\ell _1} + {\ell _2}}}{y_2}}}\sum\limits_{i,j} {\sum\limits_{k = 0}^{\min ({\ell _1},{\ell _2})} {\binom{\ell_1}{k}{{({\ell _2})}_k}} } } } {a_{i,{\ell _1}}}{a_{j,{\ell _2}}} \nonumber \\
   &\quad \times \sum\limits_{n \leqslant {y_2}} {\sum\limits_{{r_1} = 0}^{{\ell _1} - k} {\sum\limits_{{r_2} = 0}^{{\ell _2} - k} {{{( - 1)}^{  {r_1} + {r_2}}}\binom{\ell_1 - k}{r_1}\binom{\ell_2 - k}{r_2}} } } \frac{{(\mathbf{1} * \Lambda _2^{ * k} * {\Lambda ^{ * {r_1} + {r_2}}})(n)}}{n} \nonumber \\
   &\quad \times \frac{{{d^{k - {\ell _2} + {r_2}}}}}{{d{x^{k - {\ell _2} + {r_2}}}}}\frac{{{d^{k - {\ell _1} + {r_1}}}}}{{d{y^{k - {\ell _1} + {r_1}}}}}{\left( {x + \frac{{\log ({y_2}/n)}}{{\log {y_2}}}} \right)^{{j}}}{\bigg|_{x = 0}}{\left( {y + \frac{{\log ({y_2}/n)}}{{\log {y_2}}}} \right)^{{i}}}{\bigg|_{y = 0}}\bigg] + O(T/L), \nonumber 
\end{align}
where we have used $\zeta(1+\alpha+\beta)=1/(\alpha+\beta)+O(1)$. We now sum over $i$ and $j$, e.g.
\[{P_{{\ell _1}}}\left( {x + \frac{{\log ({y_2}/n)}}{{\log {y_2}}}} \right) = \sum\limits_i {{b_{i,{\ell _1}}}{{\left( {x + \frac{{\log ({y_2}/n)}}{{\log {y_2}}}} \right)}^i}}, \]
thereby getting
\begin{align}
  I'_{220}(\alpha ,\beta ) &= \frac{{\widehat {w}(0)}}{{(\alpha  + \beta ){{\log }^2}{y_2}}}\frac{{{d^2}}}{{dxdy}}\bigg[y_2^{\alpha x + \beta y}\sum\limits_{{\ell _1} = 2}^K {\sum\limits_{{\ell _2} = 2}^K {\frac{1}{{{{\log }^{{\ell _1} + {\ell _2}}}{y_2}}}\sum\limits_{k = 0}^{\min ({\ell _1},{\ell _2})} {\binom{\ell_1}{k}{{({\ell _2})}_k}} } }  \nonumber \\
   &\quad \times \sum\limits_{n \leqslant {y_2}} {\sum\limits_{{r_1} = 0}^{{\ell _1} - k} {\sum\limits_{{r_2} = 0}^{{\ell _2} - k} {{{( - 1)}^{  {r_1} + {r_2}}}\binom{\ell_1 - k}{r_1}\binom{\ell_2 - k}{r_2}} } } \frac{{(\mathbf{1} * \Lambda_2 ^{ * k} * {\Lambda ^{ * {r_1} + {r_2}}})(n)}}{n} \nonumber \\
   &\quad \times \frac{{{d^{k - {\ell _2} + {r_2}}}}}{{d{x^{k - {\ell _2} + {r_2}}}}}\frac{{{d^{k - {\ell _1} + {r_1}}}}}{{d{y^{k - {\ell _1} + {r_1}}}}}{P_{{\ell _1}}}\left( {x + \frac{{\log ({y_2}/n)}}{{\log {y_2}}}} \right){\bigg|_{x = 0}}{P_{{\ell _2}}}\left( {y + \frac{{\log ({y_2}/n)}}{{\log {y_2}}}} \right){\bigg|_{y = 0}}\bigg] + O(T/L). \nonumber 
\end{align}
Lemma \ref{lemmaEM2lambdas} gives us
\begin{align}
  &\sum\limits_{n \leqslant {y_2}} {\frac{{(\mathbf{1} * {\Lambda_2 ^{ * k}} * \Lambda^{ * {r_1} + {r_2}})(n)}}{n}} {P_{{\ell _1}}}\left( {x + \frac{{\log ({y_2}/n)}}{{\log {y_2}}}} \right){P_{{\ell _2}}}\left( {y + \frac{{\log ({y_2}/n)}}{{\log {y_2}}}} \right) \nonumber \\
   &= \frac{{{2^{{r_1} + {r_2}}}{{\log }^{1 + 2k + r_1 + r_2}}{y_2}}}{(1+r_1+r_2+2k)!}\int_0^1 {{{(1 - u)}^{2k+r_1+r_2}}{P_{{\ell _1}}}(x + u){P_{{\ell _2}}}(y + u)du} + O({\log ^{2k + {r_1} + {r_2} }}{y_2}), \nonumber  
\end{align}
so that we we are left with
\begin{align}
  I'_{220}(\alpha ,\beta ) &= \frac{{\widehat {w}(0)}}{{(\alpha  + \beta)\log^{{2}} y_2 }}\frac{{{d^2}}}{{dxdy}}\bigg[y_2^{\alpha x + \beta y}\sum\limits_{{\ell _1} = 2}^K {\sum\limits_{{\ell _2} = 2}^K {\frac{1}{{{{\log }^{{\ell _1} + {\ell _2}}}{y_2}}}} } \sum\limits_{k = 0}^{\min ({\ell _1},{\ell _2})} {\binom{\ell_1}{k}{{({\ell _2})}_k}}  \nonumber \\
   &\quad \times \sum\limits_{{r_1} = 0}^{{\ell _1}} \sum\limits_{{r_2} = 0}^{{\ell _2}} {{{( - 1)}^{{r_1} + {r_2}}}} \binom{\ell_1-k}{r_1}\binom{\ell_2-k}{r_2} \frac{{{d^{k - {\ell _2} + {r_2}}}}}{{d{x^{k - {\ell _2} + {r_2}}}}}\frac{{{d^{k - {\ell _1} + {r_1}}}}}{{d{y^{k - {\ell _1} + {r_1}}}}} \nonumber \\
   &\quad \times \frac{{{2^{{r_1} + {r_2}}}{{\log }^{1+r_1+r_2+2k}}{y_2}}}{(1+r_1+r_2+2k)!}\int_0^1 {{{(1 - u)}^{2k + {r_1} + {r_2} }}{P_{{\ell _1}}}(x + u){P_{{\ell _2}}}(y + u)du} {\bigg|_{x = y = 0}}\bigg] + O(T/L). \nonumber 
\end{align}
Note that  $r_1\leq \ell_1-k$ and  $r_2\leq \ell_2-k$. Thus only the cases $r_1= \ell_1-k$ and  $r_2= \ell_2-k$ contribute to the main term. We therefore have
\begin{align}
  I'_{220}(\alpha ,\beta ) &= \frac{{\widehat {w}(0)}}{{(\alpha  + \beta)\log y_2 }}\frac{{{d^2}}}{{dxdy}}
  \bigg[y_2^{\alpha x + \beta y}\sum\limits_{{\ell _1} = 2}^K \sum\limits_{{\ell _2} = 2}^K  
  \sum\limits_{k = 0}^{\min ({\ell _1},{\ell _2})} {\binom{\ell_1}{k}{{({\ell _2})}_k}}  ( - 1)^{\ell_1+\ell_2-2k}   \nonumber \\
   &\quad \times \frac{2^{\ell_1+\ell_2-2k}}{(\ell_1+\ell_2)!}\int_0^1 {{{(1 - u)}^{\ell_1+\ell_2 }}{P_{{\ell _1}}}(x + u){P_{{\ell _2}}}(y + u)du} {\bigg|_{x = y = 0}}\bigg] + O(T/L). \nonumber 
\end{align}
Recall that
\[{I_{22}}(\alpha ,\beta ) = I{'_{22}}(\alpha ,\beta ) + {T^{ - \alpha  - \beta }}I{'_{22}}( - \beta , - \alpha ) + O(T/L),\]
and that
\[I{'_{22}}(\alpha ,\beta ) = I{'_{220}}(\alpha ,\beta ) + O({T^{1 - \varepsilon }}),\]
therefore
\begin{align*}
 {I_{22}}(\alpha ,\beta ) 
 &= I{'_{220}}(\alpha ,\beta ) + {T^{ - \alpha  - \beta }}I{'_{220}}( - \beta , - \alpha ) + O(T/L)\\
 &= (I{'_{220}}(\alpha ,\beta ) + I{'_{220}}(-\beta, -\alpha) )+ {(T^{ - \alpha  - \beta }-1)}I{'_{220}}( - \beta , - \alpha ) + O(T/L).
\end{align*}
We first take a look at the first term in the brackets
 \begin{align*}
  &\frac{d^2}{dxdy}\bigg[(y_2^{\alpha x + \beta y}-y_2^{-\beta x -\alpha y})\int_0^1 {{{(1 - u)}^{\ell_1+\ell_2 }}{P_{{\ell _1}}}(x + u){P_{{\ell _2}}}(y + u)du} {\bigg|_{x = y = 0}}\bigg]\\
  &=
  (\alpha+\beta)\log y_2 \bigg(\int_0^1 {{{(1 - u)}^{\ell_1+\ell_2 }}{P'_{{\ell _1}}}(u){P_{{\ell _2}}}(u)du} + \int_0^1 {{{(1 - u)}^{\ell_1+\ell_2 }}{P_{{\ell _1}}}(u){P'_{{\ell _2}}}(u)du}\bigg).
 \end{align*}
Since $P_{\ell _1}(0)=P_{\ell _2}(0)= 0$, we have also
 \begin{align*}
  &0 = {(1 - u)}^{\ell_1+\ell_2} P_{\ell _1}(u) P_{\ell _2}(u)\bigg|_{u = 0}^1 = \int_0^1 \bigg({(1 - u)}^{\ell_1+\ell_2} P_{\ell _1}(u) P_{\ell _2}(u)\bigg)'du.
   \end{align*}
This implies
 \begin{align*}
 &(\ell_1+\ell_2) \int_0^1 {(1 - u)}^{\ell_1+\ell_2-1} P_{\ell _1}(u) P_{\ell _2}(u) du\\
 =&
  \int_0^1 {{{(1 - u)}^{\ell_1+\ell_2 }}{P'_{{\ell _1}}}(u){P_{{\ell _2}}}(u)du} + \int_0^1 {{{(1 - u)}^{\ell_1+\ell_2 }}{P_{{\ell _1}}}(u){P'_{{\ell _2}}}(u)du}.
 \end{align*}
Combining these observations gives
\begin{align*}
 I{'_{220}}(\alpha ,\beta ) + I{'_{220}}(-\beta, -\alpha)
 =&
 \widehat {w}(0)
 \sum\limits_{{\ell _1} = 2}^K \sum\limits_{{\ell _2} = 2}^K  
  \sum\limits_{k = 0}^{\min ({\ell _1},{\ell _2})} ( - 1)^{\ell_1+\ell_2-2k}  \binom{\ell_1}{k} (\ell _2)_k \\
  &\times \frac{2^{\ell_1+\ell_2-2k}}{(\ell_1+\ell_2-1)!} \int_0^1 {(1 - u)}^{\ell_1+\ell_2-1} P_{\ell _1}(u) P_{\ell _2}(u) du.
\end{align*}
For the expression $(T^{ - \alpha  - \beta }-1)I{'_{22}}( - \beta , - \alpha )$, we again use \eqref{integraltrick} to get
\begin{align*}
  &\frac{{\widehat {w}(0)}}{\theta_2}
  \sum\limits_{{\ell _1} = 2}^K \sum\limits_{{\ell _2} = 2}^K  
  \sum\limits_{k = 0}^{\min ({\ell _1},{\ell _2})} {\binom{\ell_1}{k}{{({\ell _2})}_k}}  ( - 1)^{\ell_1+\ell_2-2k} \frac{2^{\ell_1+\ell_2-2k}}{(\ell_1+\ell_2)!}  \nonumber \\
   &\quad \times \frac{{{d^2}}}{{dxdy}}\bigg[y_2^{-\beta x -\alpha y}\int_0^1 \int_0^1T^{ - v(\alpha  + \beta )}{{{(1 - u)}^{\ell_1+\ell_2 }}{P_{{\ell _1}}}(x + u){P_{{\ell _2}}}(y + u)dudv} {\bigg|_{x = y = 0}}\bigg] + O(T/L).  \nonumber
 \end{align*}
By using similar arguments for the holomorphy of the error terms as in the Section 3.1, we end the proof of Lemma \ref{lemmac22}.
\section{Acknowledgments}
The authors are grateful to Arindam Roy and Alexandru Zaharescu for fruitful discussions. The first author wishes to acknowledge partial support from SNF grant PP00P2 138906.

\end{document}